\theoremstyle{plain}
\newtheorem{Thm}[subsection]{Theorem}
\newtheorem{Cor}[subsection]{Corollary}
\newtheorem{Lem}[subsection]{Lemma}
\newtheorem{Prop}[subsection]{Proposition}
\newtheorem{Conj}[subsection]{Conjecture}
\theoremstyle{definition}
\newtheorem{Def}[subsection]{Definition}
\theoremstyle{remark}
\newtheorem{Rem}[subsection]{Remark}
\numberwithin{equation}{section}
\renewcommand{\rm}{\normalshape}
\newif\ifShowLabels
\newdimen\theight
\def\TeXref#1{%
    \leavevmode\vadjust{\setbox0=\hbox{{\tt
        \quad\quad  {\small \rm #1}}}%
    \theight=\ht0
    \advance\theight by \lineskip
    \kern -\theight \vbox to
    \theight{\rightline{\rlap{\box0}}%
    \vss}%
    }}%
\renewcommand{\sec}[2]{\section{#2}\label{S:#1}%
    \ifShowLabels \TeXref{{S:#1}} \fi}
\newcommand{\ssec}[2]{\subsection{#2}\label{SS:#1}%
    \ifShowLabels \TeXref{{SS:#1}} \fi}
\newcommand{\refs}[1]{Section ~\ref{S:#1}}
\newcommand{\refss}[1]{Section ~\ref{SS:#1}}
\newcommand{\reft}[1]{Theorem ~\ref{T:#1}}
\newcommand{\refl}[1]{Lemma ~\ref{L:#1}}
\newcommand{\refp}[1]{Proposition ~\ref{P:#1}}
\newcommand{\refc}[1]{Corollary ~\ref{C:#1}}
\newcommand{\refd}[1]{Definition ~\ref{D:#1}}
\newcommand{\refr}[1]{Remark ~\ref{R:#1}}
\newcommand{\refe}[1]{\eqref{E:#1}}
\newcommand{\refco}[1]{Conjecture ~\ref{Co:#1}}
\newenvironment{thm}[1]%
    { \begin{Thm} \label{T:#1}  \ifShowLabels \TeXref{T:#1} \fi }%
    { \end{Thm} }
\renewcommand{\th}[1]{\begin{thm}{#1} \sl }
\renewcommand{\eth}{\end{thm} }
\newenvironment{lemma}[1]%
    { \begin{Lem} \label{L:#1}  \ifShowLabels \TeXref{L:#1} \fi }%
    { \end{Lem} }
\newcommand{\lem}[1]{\begin{lemma}{#1} \sl}
\newcommand{\elem}{\end{lemma}}
\newenvironment{propos}[1]%
    { \begin{Prop} \label{P:#1}  \ifShowLabels \TeXref{P:#1} \fi }%
    { \end{Prop} }
\newcommand{\prop}[1]{\begin{propos}{#1}\sl }
\newcommand{\eprop}{\end{propos}}
\newenvironment{corol}[1]%
    { \begin{Cor} \label{C:#1}  \ifShowLabels \TeXref{C:#1} \fi }%
    { \end{Cor} }
\newcommand{\cor}[1]{\begin{corol}{#1} \sl }
\newcommand{\ecor}{\end{corol}}
\newenvironment{defeni}[1]%
    { \begin{Def} \label{D:#1}  \ifShowLabels \TeXref{D:#1} \fi }%
    { \end{Def} }
\newcommand{\defe}[1]{\begin{defeni}{#1} \sl }
\newcommand{\edefe}{\end{defeni}}
\newenvironment{remark}[1]%
    { \begin{Rem} \label{R:#1}  \ifShowLabels \TeXref{R:#1} \fi }%
    { \end{Rem} }
\newcommand{\rem}[1]{\begin{remark}{#1}}
\newcommand{\erem}{\end{remark}}
\newenvironment{conjec}[1]%
    { \begin{Conj} \label{Co:#1}  \ifShowLabels \TeXref{Co:#1} \fi }%
    { \end{Conj} }
\renewcommand{\conj}[1]{\begin{conjec}{#1} \sl }
\newcommand{\econj}{\end{conjec}}
\newcommand{\eq}[1]%
    { \ifShowLabels \TeXref{E:#1} \fi
       \begin{equation} \label{E:#1} }
\newcommand{\eeq}{ \end{equation} }
\newcommand{\prf}{ \begin{proof} }
\newcommand{\epr}{ \end{proof} }
\newcommand\nc{\newcommand}
\nc{\HC}{{\mathcal{HC}}}
\nc{\on}{\operatorname}
\nc{\BA}{{\mathbb{A}}}
\nc{\BC}{{\mathbb{C}}}
\nc{\BD}{{\mathbb{D}}}
\nc{\BF}{{\mathbb{F}}}
\nc{\BG}{{\mathbb{G}}}
\nc{\BM}{{\mathbb{M}}}
\nc{\BN}{{\mathbb{N}}}
\nc{\BO}{{\mathbb{O}}}
\nc{\BQ}{{\mathbb{Q}}}
\nc{\BP}{{\mathbb{P}}}
\nc{\BR}{{\mathbb{R}}}
\nc{\BZ}{{\mathbb{Z}}}
\nc{\BS}{{\mathbb{S}}}
\nc{\CA}{{\mathcal{A}}}
\nc{\CB}{{\mathcal{B}}}
\nc{\CalC}{{\mathcal C}}
\nc{\CalD}{{\mathcal D}}
\nc{\CE}{{\mathcal{E}}}
\nc{\CF}{{\mathcal{F}}}
\nc{\CG}{{\mathcal{G}}}
\nc{\CH}{{\mathcal{H}}}
\nc{\CK}{{\mathcal{K}}}
\nc{\CL}{{\mathcal{L}}}
\nc{\CM}{{\mathcal{M}}}
\nc{\CMM}{{\mathcal{M}^{\operatorname{gen}}_\hbar(-\rho)}}
\nc{\CN}{{\mathcal{N}}}
\nc{\CO}{{\mathcal{O}}}
\nc{\CP}{{\mathcal{P}}}
\nc{\CQ}{{\mathcal{Q}}}
\nc{\CR}{{\mathcal{R}}}
\nc{\CS}{{\mathcal{S}}}
\nc{\CT}{{\mathcal{T}}}
\nc{\CU}{{\mathcal{U}}}
\nc{\CV}{{\mathcal{V}}}
\nc{\CW}{{\mathcal{W}}}
\nc{\CX}{{\mathcal{X}}}
\nc{\CY}{{\mathcal{Y}}}
\nc{\CZ}{{\mathcal{Z}}}
\nc{\gen}{{\operatorname{gen}}}
\nc{\cM}{{\check{\mathcal M}}{}}
\nc{\csM}{{\check{\mathcal A}}{}}
\nc{\obM}{{\overset{\circ}{\mathbf M}}{}}
\nc{\oCA}{{\overset{\circ}{\mathcal A}}{}}
\nc{\obA}{{\overset{\circ}{\mathbf A}}{}}
\nc{\ooM}{{\overset{\circ}{M}}{}}
\nc{\osM}{{\overset{\circ}{\mathsf M}}{}}
\nc{\vM}{{\overset{\bullet}{\mathcal M}}{}}
\nc{\nM}{{\underset{\bullet}{\mathcal M}}{}}
\nc{\obD}{{\overset{\circ}{\mathbf D}}{}}
\nc{\cp}{{\overset{\circ}{\mathbf p}}{}}
\nc{\ofZ}{{\overset{\circ}{\mathfrak Z}}{}}
\nc{\fa}{{\mathfrak{a}}}
\nc{\fb}{{\mathfrak{b}}}
\nc{\fg}{{\mathfrak{g}}}
\nc{\fgl}{{\mathfrak{gl}}}
\nc{\fh}{{\mathfrak{h}}}
\nc{\fj}{{\mathfrak{j}}}
\nc{\fl}{{\mathfrak{l}}}
\nc{\fm}{{\mathfrak{m}}}
\nc{\fn}{{\mathfrak{n}}}
\nc{\fu}{{\mathfrak{u}}}
\nc{\fp}{{\mathfrak{p}}}
\nc{\frr}{{\mathfrak{r}}}
\nc{\fs}{{\mathfrak{s}}}
\nc{\ft}{{\mathfrak{t}}}
\nc{\fw}{{\mathfrak{w}}}
\nc{\fz}{{\mathfrak{z}}}
\nc{\ofT}{{\overline{\mathfrak T}}}
\nc{\ofS}{{\overline{\mathfrak S}}}
\nc{\fsl}{{\mathfrak{sl}}}
\nc{\hsl}{{\widehat{\mathfrak{sl}}}}
\nc{\hgl}{{\widehat{\mathfrak{gl}}}}
\nc{\hg}{{\widehat{\mathfrak{g}}}}
\nc{\chg}{{\widehat{\mathfrak{g}}}{}^\vee}
\nc{\hn}{{\widehat{\mathfrak{n}}}}
\nc{\chn}{{\widehat{\mathfrak{n}}}{}^\vee}
\nc{\fA}{{\mathfrak{A}}}
\nc{\fB}{{\mathfrak{B}}}
\nc{\fD}{{\mathfrak{D}}}
\nc{\fE}{{\mathfrak{E}}}
\nc{\fF}{{\mathfrak{F}}}
\nc{\fG}{{\mathfrak{G}}}
\nc{\fI}{{\mathfrak{I}}}
\nc{\fJ}{{\mathfrak{J}}}
\nc{\fK}{{\mathfrak{K}}}
\nc{\fL}{{\mathfrak{L}}}
\nc{\fM}{{\mathfrak{M}}}
\nc{\fN}{{\mathfrak{N}}}
\nc{\frP}{{\mathfrak{P}}}
\nc{\fQ}{{\mathfrak Q}}
\nc{\fR}{{\mathfrak R}}
\nc{\fS}{{\mathfrak S}}
\nc{\fT}{{\mathfrak{T}}}
\nc{\fU}{{\mathfrak{U}}}
\nc{\fW}{{\mathfrak{W}}}
\nc{\fZ}{{\mathfrak{Z}}}
\nc{\ba}{{\mathbf{a}}}
\nc{\bb}{{\mathbf{b}}}
\nc{\bc}{{\mathbf{c}}}
\nc{\bd}{{\mathbf{d}}}
\nc{\be}{{\mathbf{e}}}
\nc{\bi}{{\mathbf{i}}}
\nc{\bj}{{\mathbf{j}}}
\nc{\bn}{{\mathbf{n}}}
\nc{\bp}{{\mathbf{p}}}
\nc{\bq}{{\mathbf{q}}}
\nc{\bu}{{\mathbf{u}}}
\nc{\bv}{{\mathbf{v}}}
\nc{\bx}{{\mathbf{x}}}
\nc{\by}{{\mathbf{y}}}
\nc{\bw}{{\mathbf{w}}}
\nc{\bA}{{\mathbf{A}}}
\nc{\bB}{{\mathbf{B}}}
\nc{\bC}{{\mathbf{C}}}
\nc{\bD}{{\mathbf{D}}}
\nc{\bE}{{\mathbf{E}}}
\nc{\bK}{{\mathbf{K}}}
\nc{\bH}{{\mathbf{H}}}
\nc{\bM}{{\mathbf{M}}}
\nc{\bN}{{\mathbf{N}}}
\nc{\bO}{{\mathbf{O}}}
\nc{\bQ}{{\mathbf Q}}
\nc{\bS}{{\mathbf{S}}}
\nc{\bT}{{\mathbf{T}}}
\nc{\bV}{{\mathbf{V}}}
\nc{\bW}{{\mathbf{W}}}
\nc{\bX}{{\mathbf{X}}}
\nc{\bP}{{\mathbf{P}}}
\nc{\bZ}{{\mathbf{Z}}}
\nc{\bY}{{\mathbf{Y}}}
\nc{\sA}{{\mathsf{A}}}
\nc{\sB}{{\mathsf{B}}}
\nc{\sC}{{\mathsf{C}}}
\nc{\sD}{{\mathsf{D}}}
\nc{\sF}{{\mathsf{F}}}
\nc{\sK}{{\mathsf{K}}}
\nc{\sM}{{\mathsf{M}}}
\nc{\sO}{{\mathsf{O}}}
\nc{\sP}{{\mathsf{P}}}
\nc{\sQ}{{\mathsf{Q}}}
\nc{\sR}{{\mathsf{R}}}
\nc{\sV}{{\mathsf{V}}}
\nc{\sW}{{\mathsf{W}}}
\nc{\sZ}{{\mathsf{Z}}}
\nc{\sfp}{{\mathsf{p}}}
\nc{\sr}{{\mathsf{r}}}
\nc{\st}{{\mathsf{t}}}
\nc{\sfu}{{\mathsf{u}}}
\nc{\sfb}{{\mathsf{b}}}
\nc{\sfc}{{\mathsf{c}}}
\nc{\sd}{{\mathsf{d}}}
\nc{\sg}{{\mathsf{g}}}
\nc{\sk}{{\mathsf{k}}}
\nc{\sfl}{{\mathsf{l}}}
\nc{\BK}{{\bar{K}}}
\nc{\tA}{{\widetilde{\mathbf{A}}}}
\nc{\tB}{{\widetilde{\mathcal{B}}}}
\nc{\tg}{{\widetilde{\mathfrak{g}}}}
\nc{\tG}{{\widetilde{G}}}
\nc{\TM}{{\widetilde{\mathbb{M}}}{}}
\nc{\tN}{{\widetilde{\mathcal{N}}}{}}
\nc{\tO}{{\widetilde{\mathsf{O}}}{}}
\nc{\tU}{{\widetilde{\mathfrak{U}}}{}}
\nc{\TZ}{{\tilde{Z}}}
\nc{\tZ}{\widetilde{Z}{}}
\nc{\tx}{{\tilde{x}}}
\nc{\tbv}{{\tilde{\bv}}}
\nc{\tfP}{{\widetilde{\mathfrak{P}}}{}}
\nc{\tz}{{\tilde{\zeta}}}
\nc{\tmu}{{\tilde{\mu}}}
\nc{\td}{\ddot{\underline{d}}{}}
\nc{\tzeta}{\widetilde{\zeta}{}}
\nc{\hd}{{\widehat{\underline{d}}}}
\nc{\hG}{{\widehat{G}}}
\nc{\hBP}{\widehat{\mathbb P}{}}
\nc{\hQ}{{\widehat{Q}}}
\nc{\hsM}{\widehat{\mathsf M}{}}
\nc{\hfM}{\widehat{\mathfrak M}{}}
\nc{\hCP}{\widehat{\mathcal P}{}}
\nc{\hCR}{\widehat{\mathcal R}{}}
\nc{\hCS}{{\widehat{\mathcal S}}}
\nc{\hfZ}{\widehat{\mathfrak Z}{}}
\nc{\hZ}{\widehat{Z}{}}
\nc{\urho}{\underline{\rho}}
\nc{\uB}{\underline{B}}
\nc{\uC}{{\underline{\mathbb{C}}}}
\nc{\ui}{\underline{i}}
\nc{\ofP}{{\overline{\mathfrak{P}}}}
\nc{\hrho}{{\hat{\rho}}}
\nc{\unl}{\underline}
\nc{\ol}{\overline}
\nc{\one}{{\mathbf{1}}}
\nc{\two}{{\mathbf{t}}}
\newcommand{\bbeta}{{\boldsymbol{\beta}}}
\newcommand{\bmu}{{\boldsymbol{\mu}}}
\newcommand{\bomega}{{\boldsymbol{\omega}}}
\nc{\Sym}{{\mathop{\operatorname{Sym}}}}
\nc{\Tot}{{\mathop{\operatorname{\normalshape Tot}}}}
\nc{\Hilb}{{\mathop{\operatorname{\normalshape Hilb}}}}
\nc{\Hom}{{\mathop{\operatorname{Hom}}}}
\nc{\CHom}{{\mathop{\operatorname{{\mathcal{H}}\it om}}}}
\nc{\defi}{{\mathop{\operatorname{\normalshape def}}}}
\nc{\length}{{\mathop{\operatorname{\normalshape length}}}}
\nc{\Rees}{{\mathop{\operatorname{Rees}}}}
\nc{\gr}{{\mathop{\operatorname{gr}}}}
\nc{\Cliff}{{\mathsf{Cliff}}}
\nc{\Fl}{{\mathcal{F}\ell}}
\nc{\Fib}{{\mathsf{Fib}}}
\nc{\Coh}{{\mathsf{Coh}}}
\nc{\FCoh}{{\mathsf{FCoh}}}
\nc{\reg}{{\text{\normalshape reg}}}
\nc{\res}{{\operatorname{res}}}
\nc{\rk}{{\operatorname{rk}}}
\nc{\cplus}{{\mathbf{C}_+}}
\nc{\cminus}{{\mathbf{C}_-}}
\nc{\cthree}{{\mathbf{C}_*}}
\nc{\Qbar}{{\bar{Q}}}
\nc{\bh}{{\bar{h}}}
\nc{\bOmega}{{\overline{\Omega}}}
\nc\tGr{\widetilde{\Gr}}
\nc{\seq}[1]{\stackrel{#1}{\sim}}
\nc\ogu{\overline{G/U}}
\nc\chlam{\check{\lam}}
\nc\St{\operatorname{St}}
\nc{\tF}{\widetilde{\mathcal F}}
\nc{\Gr}{{\on{Gr}}}
\nc{\lmod}{\setminus}
\nc{\Cx}{\BC^\times}
\nc\iso{\,\vphantom{j^{X^2}}\smash{\overset{\sim}{\vphantom{\vstretch{0.3}{A}}\smash{\longrightarrow}}}\,}
\nc\QM{\mathcal{QM}}
\newcommand{\oZ}{\vphantom{j^{X^2}}\smash{\overset{\circ}{\vphantom{\vstretch{0.7}{A}}\smash{Z}}}}
\nc{\chmu}{\check{\mu}}
\newcommand\la{\langle}
\newcommand\ra{\rangle}
\newcommand{\oW}{\overline{\mathcal{W}}{}}
\newcommand{\Cartan}{Y^=}
\begin{document}
\title{Comultiplication for shifted Yangians and quantum open Toda lattice}
\dedicatory{To David Kazhdan on his 70th birthday, with admiration}
\author
[M.~Finkelberg, J.~Kamnitzer, K.~Pham, L.~Rybnikov, and A.~Weekes]
{Michael Finkelberg, Joel Kamnitzer, Khoa Pham, Leonid Rybnikov, and Alex Weekes}

\begin{abstract}
We study a coproduct in type $A$ quantum open Toda lattice in terms of a
coproduct in the shifted Yangian of $\fsl_2$. At the classical level this
corresponds to the multiplication of scattering matrices of euclidean
$SU(2)$ monopoles.  We also study coproducts for shifted Yangians for any simply-laced Lie algebra.
\end{abstract}
\maketitle

\begin{flushright}
{\small{\textit{In my youth I have multiplied too many matrices,\\
so now I try to avoid it if there is a way around.}}\\
(D.~Kazhdan)\hskip 20mm\hphantom{x}}
\end{flushright}

\sec{int}{Introduction}

\ssec{intro1}{The Toda lattice}
Let $G\supset B\supset T$ be a reductive group with a Borel subgroup and
a Cartan subgroup; let $U$ be the
unipotent radical of $B$, and let $\fn$ be the Lie algebra of $U$.
Let $\psi\colon U(\fn)\to\BC$ be a regular character. Let $\CalD(G)$ be
the ring of differential operators on $G$. The action of $\fn$ by the
left-invariant (resp.\ right-invariant) vector fields on $G$ gives rise to
the homomorphism $U(\fn)\otimes U(\fn)\to\CalD(G)$. The ring $\CT(G)$ is
defined as the quantum hamiltonian reduction
$\CalD(G)/\!\!/(U\times U;\psi,-\psi)$. It comes equipped with a homomorphism
from the ring $ZU(\fg)$ of biinvariant differential operators on $G$.
The action of $U\times U$ on the big Bruhat cell
$C_{w_0}=U\cdot T\cdot\dot{w}_0\cdot U$ is free, and the quantum hamiltonian
reduction $\CalD(C_{w_0})/\!\!/(U\times U;\psi,-\psi)$ is isomorphic to the
ring $\CalD(T)$ of differential operators on $T$. Thus we obtain a localization
homomorphism $\CT(G)\hookrightarrow\CalD(T)$, and the composed embedding
$ZU(\fg)\hookrightarrow\CT(G)\hookrightarrow\CalD(T)$. This is the classical
construction of the quantum open Toda lattice due to Kazhdan-Kostant.

At the quasiclassical level, we denote by $\fZ(G)$ the symplectic variety
obtained by the hamiltonian reduction of the cotangent bundle of
$G\colon \fZ(G)=T^*G/\!\!/(U\times U;\psi,-\psi)$. It is equipped with a
lagrangian projection onto $\on{Spec}ZU(\fg)=\fh^*/W$ where $ZU(\fg)$ is
the Harish-Chandra center, $\fh$ is the Lie algebra of $T$, and $W$ is the
Weyl group of $(G,T)$. Furthermore, $\fZ(G)$ contains an open symplectic
subvariety $T^*T$ (the cotangent bundle to the torus $T$), and thus we obtain
the composed lagrangian projection
$\pi_G\colon T^*T\hookrightarrow\fZ(G)\to\fh^*/W$
(Poisson commuting Toda hamiltonians).

\ssec{intro2}{Multiplicative structure}
In case $G=GL(n)$, there is the following explicit construction of the Toda
hamiltonians (see e.g.~\cite[Section~2]{ft} and references therein).
Let $t_1,\ldots,t_n$ be the diagonal matrix elements coordinates on the
diagonal torus $T\subset GL(n)$. Let $w_1,\ldots,w_n$ be the corresponding
coordinates on the dual Lie algebra $\fh^*$. For $r=1,\ldots,n$ we consider
the local Lax matrix $L_r(z)=\left(\begin{array}{cc}z-w_r&t_r\\
-t_r^{-1}&0\end{array}\right)\in SL(2,\BC[z])$, and form the complete
monodromy matrix $L(z)=L_1(z)\cdots L_n(z)=\left(\begin{array}{cc}Q(z)&R'(z)\\
R(z)&Q'(z)\end{array}\right)$. Then the Toda hamiltonians
$\pi_{GL(n)}(t_1,w_1,\ldots,t_n,w_n)$ are nothing but the coefficients of the
polynomial $Q(z)$.\footnote{Realizing $Sp(2n)$ as the folding of $GL(2n)$
and identifying the Siegel Levi subgroup with $GL(n)$, we deduce that the
Toda hamiltonians $\pi_{Sp(2n)}(t_1,w_1,\ldots,t_n,w_n)$ are nothing but the
(even degree) coefficients of $\sQ(z)$ where
$\left(\begin{array}{cc}\sQ(z)&\sR'(z)\\
\sR(z)&\sQ'(z)\end{array}\right)=L_1(z)\cdots L_n(z)L'_n(z)\cdots L'_1(z)$,
and $L'_r(z)=\left(\begin{array}{cc}z+w_r&t_r^{-1}\\
-t_r&0\end{array}\right)\in SL(2,\BC[z]),\ r=1,\ldots,n$.}

Our note stems from a simple observation that the above multiplicative
structure of type $A$ Toda hamiltonians arises from the associative
multiplication $\fZ(GL(k))\times\fZ(GL(l))\to\fZ(GL(k+l))$ that can be
quantized to a coassociative comultiplication
$\CT(GL(k+l))\to\CT(GL(k))\otimes\CT(GL(l))$. More generally, for any pair
of Levi subgroups $T\subset M\subset L\subset G$ we have a homomorphism
$\CT(L)\to\CT(M)$ satisfying the obvious transitivity relations,
see~\refss{homo}.
Turning back to the type $A$ case, note that $\fZ(GL(n))$ is isomorphic to
the open zastava space $\oZ^n$ of degree $n$ based maps from $(\BP^1,\infty)$
to $(\BP^1,\infty)$, aka moduli space of euclidean $SU(2)$-monopoles of
topological charge $n$. Under this isomorphism, the above complete monodromy
matrix goes to the scattering matrix, and the Toda multiplication goes to
the zastava multiplication~\cite[2(vi,xi,xii)]{bfn16} which we learned
of from D.~Gaiotto and T.~Dimofte, see~\reft{classi}.

\ssec{intro3}{Shifted Yangians}
According to~\cite[Appendix~B]{bfn16}, the quantization $\CT(GL(n))$ of $\BC[\fZ(GL(n))]$
is a certain explicit quotient of the shifted Yangian $Y_{-2n}(\fsl_2)$.
One of our main results is that the above comultiplication $\CT(GL(k+l))\to\CT(GL(k))\otimes\CT(GL(l))$
descends from a comultiplication
$Y_{-2k-2l}(\fsl_2)\to Y_{-2k}(\fsl_2)\otimes Y_{-2l}(\fsl_2)$,
see~\reft{quantu}.

Much of this paper is concerned with the study of comultiplication for shifted Yangians (beyond $ \fsl_2$). In \reft{generalcoproduct}, we establish the existence of such a coproduct for any simply-laced Lie algebra $ \fg $.  This generalizes the coproduct on shifted $\mathfrak{gl}_n$--Yangians defined in \cite[Theorem 11.9]{BK}, which is analogous to the dominant type A case of our construction (see \refr{BK coproduct}).  We also study the corresponding multiplicative structure on the classical limit of shifted Yangians, which are the moduli spaces $ \CW_\mu $ which we introduced in \cite{bfn16}, see \refs{classical}.

We must admit that the identification of the quantum Toda for $GL(n)$ and
the shifted Yangian for $\fsl_2$ (purely algebraic objects) goes through a
topological medium: equivariant homology of the affine Grassmannian of $GL(n)$.
According to~\cite[Appendix~A]{bfn16}, the latter convolution ring has a
natural representation in the difference operators on $\fh^*$. As a bonus
we obtain a bispectrality result in~\refp{bispec}.

\ssec{intro4}{Acknowledgements}

This note can be viewed as an appendix to an appendix of~\cite{bfn16}.
Its existence is due to the generous and patient explanations by the authors of
{\em op.~cit.}, D.~Gaiotto, T.~Dimofte, B.~Feigin, R.~Bezrukavnikov,
P.~Etingof, P.~Zinn-Justin, A.~Marshakov, N.~Guay, S.~Gautam and A.~Tsymbaliuk.
We are very grateful to all of them.  We thank H. Nakajima for pointing out a mistake in earlier versions of~Theorem~\ref{thm: coproduct existence}
and~Remark~\ref{rem: coproduct existence}.  We also thank the referee for helpful comments.
The study of L.R.\ has been funded by the Russian Academic Excellence Project `5-100'. The research of M.F.\ was supported by the grant RSF-DFG 16-41-01013.  The research of J.K.\ was funded by NSERC.  This research was supported in part by Perimeter Institute for Theoretical Physics. Research at Perimeter Institute is supported by the Government of Canada through Industry Canada and by the Province of Ontario through the Ministry of Economic Development and Innovation.

\sec{levi}{Quantum Toda for Levi subgroups}

\ssec{recall}{Quantum Toda lattice}
Let $G\supset B\supset T$ be a reductive group with a Borel and Cartan
subgroup. Let $T\subset B_-\subset G$ be the opposite Borel subgroup;
let $U$ (resp.\ $U_-$) be the unipotent radical of $B$ (resp.\ $B_-$).
The Lie algebra of $G$ (resp.\ $U_-$) will be denoted by $\fg$
(resp.\ $\fn_-$). Let $U_\hbar(\fg), U_\hbar(\fn_-)$ stand for the
$\hbar$-universal enveloping algebras of $\fg, \fn_-$.
Let $\psi\colon U_\hbar(\fn_-)\to\BC[\hbar]$ be a homomorphism such that
$\psi(f_\alpha)=1$ for any simple root $\alpha$ (we fix a root generator
$f_\alpha\in\fn_-\subset U_\hbar(\fn_-)$).
Let $\CalD_\hbar(G)$ stand for the global sections of the sheaf of
$\hbar$-differential operators on $G$: it is the smash product of
$U_\hbar(\fg)$ and $\BC[G]$. The action of $\fn_-$ by the left-invariant
(resp.\ right-invariant) vector fields on $G$ gives rise to the homomorphism
$l$ (resp.\ $r)\colon U_\hbar(\fn_-)\to\CalD_\hbar(G)$.
Let $I_\psi\subset\CalD_\hbar(G)$ be the left ideal generated by the
$\hbar$-differential operators of the sort $l(x_1)-\psi(x_1)+r(x_2)+\psi(x_2),\
x_1,x_2\in U_\hbar(\fn_-)$. We consider the quantum hamiltonian reduction
$$\CT_\hbar(G):=(\CalD_\hbar(G)/I_\psi)^{U_-\times U_-}$$
where the first (resp.\ second) copy of $U_-$ acts on $G$ (and hence on
$\CalD_\hbar(G)$) by the left (resp.\ right) translations:
$(u_1,u_2)\cdot g:=u_1gu_2^{-1}$. It is an algebra containing the center
$ZU_\hbar(\fg)$ via the embedding $ZU_\hbar(\fg)\hookrightarrow\CalD_\hbar(G)$
as both left- and right-invariant $\hbar$-differential operators.
This is the classical Kazhdan-Kostant construction of the quantum Toda
lattice, see~\cite{k79}.

\ssec{homo}{Comparison with the Toda lattice for a Levi subgroup}
For each element $w$ of the Weyl group $W=N_G(T)/T$ we choose its lift $\dot{w}$
into the normalizer $N_G(T)$. Let $w_0$ be the longest element of $W$.
The action of $U_-\times U_-$ on the big Bruhat cell
$C_{w_0}:=U_-\cdot T\cdot\dot{w}_0\cdot U_-=
U_-\cdot T\cdot\dot{w}{}_0^{-1}\cdot U_-\subset G$ is free, and hence
the quantum hamiltonian reduction of $\CalD_\hbar(C_{w_0})$ is isomorphic
to $\CalD_\hbar(T)=\CT_\hbar(T)$, a certain localization of $\CT_\hbar(G)$.
Thus we have an embedding
$ZU_\hbar(\fg)\hookrightarrow\CT_\hbar(G)\hookrightarrow\CalD_\hbar(T)$.\footnote{Quite
often the term {\em quantum Toda lattice} refers to the composite embedding
$ZU_\hbar(\fg)\hookrightarrow\CalD_\hbar(T)$.}

More generally,
let $T\subset L\subset P\supset B$ be a Levi subgroup of a parabolic
subgroup of $G$; let $P_-\supset B_-$ be the opposite parabolic subgroup.
We denote by $\fl,\fp,\fp_-$ the Lie algebras of $L,P,P_-$. We denote by
$U^L$ (resp.\ $U^L_-$) the intersection $L\cap U$ (resp.\ $L\cap U_-$),
and we denote by
$U^P$ (resp.\ $U^P_-$) the unipotent radical of $P$ (resp.\ $P_-$).
Finally, we denote by $\fn^L_-,\fn^P_-,\fn^P$ the Lie algebras of
$U^L_-,U^P_-,U^P$. We will also need the subgroups
$U^P_{w_0}:=\dot{w}_0U^P\dot{w}{}_0^{-1},\ U^L_{w_0}:=\dot{w}_0U^L\dot{w}{}_0^{-1}$
and their Lie algebras $\fn^P_{w_0},\ \fn^L_{w_0}$.
The restriction of $\psi$ to $U_\hbar(\fn^P_-)$
(resp.\ $U_\hbar(\fn^P_{w_0}),\ U_\hbar(\fn^L_-),\ U_\hbar(\fn^L_{w_0})$)
will be denoted by
$\psi^P$ (resp.\ $\psi^P_{w_0},\ \psi^L,\ \psi^L_{w_0}$).
Let $\dot{w}{}^L_0$ be a lift of
$w^L_0$ (the longest element in the parabolic Weyl subgroup $W_L\subset W$)
into the normalizer $N_G(T)$. Let $C_{W_Lw_0}\subset G$ be an affine
open subvariety equal
to the union of all the Bruhat cells $C_{ww_0}$ where $w\in W_L$
(it is the preimage of the big Bruhat cell
$(B\cdot\dot{w}_0P\dot{w}{}_0^{-1})/\dot{w}_0P\dot{w}{}_0^{-1}$
in the partial flag variety $G/\dot{w}_0P\dot{w}{}_0^{-1}$).
The action of $U^P_-\times U^P_{w_0}$ on $C_{W_Lw_0}$ is free, and a closed
embedding $\imath_L^G\colon
L\hookrightarrow C_{W_Lw_0},\ g\mapsto g\dot{w}{}^L_0\dot{w}{}_0^{-1}$,
is a cross section of this action giving rise to an isomorphism
\eq{one}
L\times U^P_-\times U^P_{w_0}\iso C_{W_Lw_0},\ (g,x_1,x_2)\mapsto
x_1g\dot{w}{}^L_0\dot{w}{}_0^{-1}x_2^{-1},
\end{equation}
and hence to $L\iso C_{W_Lw_0}/(U^P_-\times U^P_{w_0})$. Hence
$$(\CalD_\hbar(C_{W_Lw_0})/I_{\psi^P,\psi^P_{w_0}})^{U^P_-\times U^P_{w_0}}\iso\CalD_\hbar(L),$$
where $I_{\psi^P,\psi^P_{w_0}}\subset\CalD_\hbar(C_{W_Lw_0})$ is the left ideal
generated by the $\hbar$-differential operators of the sort
$l(x_1)-\psi^P(x_1)+r(x_2)+\psi^P_{w_0}(x_2),\ x_1\in U_\hbar(\fn^P_-),\
x_2\in U_\hbar(\fn^P_{w_0})$. Indeed,~\refe{one} gives rise to an isomorphism
$\CalD_\hbar(C_{W_Lw_0})\iso
\CalD_\hbar(U^P_-)\otimes\CalD_\hbar(L)\otimes\CalD_\hbar(U^P_{w_0})$, but
$(\CalD_\hbar(U^P_-)/I_{\psi^P})^{U^P_-}\simeq\BC$, and
$(\CalD_\hbar(U^P_{w_0})/I_{\psi^P_{w_0}})^{U^P_{w_0}}\simeq\BC$.

Composing the above isomorphism with the restriction to the open subset
$\CalD_\hbar(G)\to\CalD_\hbar(C_{W_Lw_0})$ we obtain a homomorphism
$$(\CalD_\hbar(G)/I_{\psi^P,\psi^P_{w_0}})^{U^P_-\times U^P_{w_0}}\to\CalD_\hbar(L).$$
Furthermore, we obtain a composed homomorphism
$$\tau_G^L\colon
\CT_\hbar(G)=\left((\CalD_\hbar(G)/I_{\psi^P,\psi^P_{w_0}})^{U^P_-\times U^P_{w_0}}/
I_{\psi^L,\psi^L_{w_0}}\right)^{U^L_-\times U^L_{w_0}}\to
(\CalD_\hbar(L)/I_{\psi^L,\psi^L_{w_0}})^{U^L_-\times U^L_{w_0}}=\CT_\hbar(L).$$
For a pair of Levi subgroups
$M\subset L\subset G$ we have $\imath_M^G=\imath_L^G\circ\imath_M^L$, and
hence $\tau_G^M=\tau_G^L\circ\tau_L^M$.

\ssec{taipei}{Type A}
For $G=GL(n)$ we will denote $\CT_\hbar(GL(n))$ by $\CT_\hbar^n$ for short.
We view $GL(n)$ as the group of invertible $n\times n$-matrices.
Let $T$ be the diagonal subgroup, and let $U$ (resp.\ $U_-$) be the
subgroup of lower (resp.\ upper) triangular matrices with 1's on the main
diagonal. We choose a lift $W\to N_G(T),\ w\mapsto\dot{w}$ representing each
$w\in W=\fS_n$ by the corresponding permutation matrix. In particular, for
any Levi $L$, we have $(\dot{w}_0^L)^2=1$.
Let $L$ be the subgroup of block matrices with blocks of sizes
$k,l$ such that $k+l=n$. Then $\tau_{k,l}:=\tau_G^L$ is a homomorphism
from $\CT_\hbar(G)=\CT_\hbar^{k+l}$ to $\CT_\hbar(L)=\CT_\hbar^k\otimes\CT_\hbar^l$.
We obtained a coassociative comultiplication on $\bigoplus_n\CT_\hbar^n$.

Let $V_{\varpi_1}=V=\BC^n$ be the standard (tautological) representation of $GL(n)$ and
$V_{\varpi_1}^*$ be its dual representation. Let $v_1,\ldots,v_n\in V_{\varpi_1}$
be the standard basis (so that $v_n\in V_{\varpi_1}^U,\ v_1\in V_{\varpi_1}^{U_-}$).
Let $v_1^*,\ldots,v_n^*\in V_{\varpi_1}^*$ be the dual basis
(so that $v_1^*\in(V_{\varpi_1}^*)^U,\ v_n^*\in(V_{\varpi_1}^*)^{U_-}$).
Then the functions
$\varDelta'(g):=\langle gv_1,v^*_n\rangle$ and
$\varDelta(g):=\langle g^{-1}v_1,v^*_n\rangle$ on $G$ are
$U_-\times U_-$-invariant hence survive after Hamiltonian reduction. We denote
their images in $\CT_\hbar^n$ by the same symbols $\varDelta'$ and
$\varDelta$ for brevity.

Any central element $C\in U_\hbar(\fg)$ is naturally a bi-invariant $\hbar$-differential operator on $G$ hence survives after Hamiltonian reduction as well. We denote by $C_1$ and $C_2$ the images in $\CT_\hbar^n$ of the linear central element $\sum\limits_{i=1}^ne_{ii}$ and the quadratic central element $\frac{1}{2}\sum\limits_{i,j=1}^n(e_{ii}e_{jj}-e_{ij}e_{ji})$, respectively.

\prop{25} We have $\tau_{k,l}(\varDelta')=1\otimes\varDelta'$, $\tau_{k,l}(\varDelta)=\varDelta\otimes1$, $\tau_{k,l}(C_1)=C_1\otimes1+1\otimes C_1$, $\tau_{k,l}(C_2)=C_2\otimes1+1\otimes C_2+C_1\otimes C_1-\varDelta'\otimes\varDelta-\frac{l\hbar}{2}C_1\otimes1+\frac{k\hbar}{2}1\otimes C_1$.
\eprop

\prf
Straightforward check.
\epr

\ssec{homology}{Equivariant homology of an affine Grassmannian}
Let $G$ be as in~\refss{recall}, let $G^\vee$ be its Langlands dual group,
and let $\Gr_{G^\vee}=G^\vee_\CK/G^\vee_\CO$ be its affine Grassmannian;
here $\CK=\BC((z))\supset\BC[[z]]=\CO$. The affine Grassmannian is acted upon
by a proalgebraic group $G^\vee_\CO\rtimes\BC^\times$
(the second factor acts by loop rotations). The equivariant homology
$H_\bullet^{G^\vee_\CO\rtimes\BC^\times}(\Gr_{G^\vee})$ forms
a convolution algebra, and an isomorphism
$\bbeta\colon H_\bullet^{G^\vee_\CO\rtimes\BC^\times}(\Gr_{G^\vee})\iso\CT_\hbar(G)$
was constructed
in~\cite[Theorem~3]{bf08}.\footnote{An isomorphism $\bbeta$ was constructed in
{\em loc. cit.} for semisimple groups, but the argument works word for word
for reductive groups.} In particular, $\bbeta^{-1}(\hbar)$ is a generator
of $H^\bullet_{\BC^\times}(pt)$, and $\bbeta^{-1}(ZU_\hbar(\fg))=
H^\bullet_{G^\vee_\CO\rtimes\BC^\times}(pt)\subset
H_\bullet^{G^\vee_\CO\rtimes\BC^\times}(\Gr_{G^\vee})$.

For a dominant coweight $\lambda$ of $G^\vee$ the corresponding
$G^\vee_\CO$-orbit closure in $\Gr_{G^\vee}$ is denoted by
$\ol\Gr{}^\lambda_{G^\vee}$, and its intersection cohomology
sheaf is denoted by $\on{IC}^\lambda$. Let $d_\lambda$ stand for
$\dim\ol\Gr{}^\lambda_{G^\vee}$. We have a canonical morphism
from the shifted constant sheaf on $\ol\Gr{}^\lambda_{G^\vee}$ to the intersection
cohomology sheaf:
\eq{cocycle}
\unl\BC{}_{\ol\Gr{}^\lambda_{G^\vee}}[d_\lambda]\to\on{IC}^\lambda,
\end{equation}
and hence $H^{\bullet+d_\lambda}(\ol\Gr{}^\lambda_{G^\vee})\to
H^\bullet(\ol\Gr{}^\lambda_{G^\vee},\on{IC}^\lambda)$ inducing isomorphism in the
top and lowest cohomology.
The geometric Satake isomorphism is an identification
$H^\bullet(\ol\Gr{}^\lambda_{G^\vee},\on{IC}^\lambda)\cong V_\lambda$
with an irreducible representation of $G$ with highest weight
$\lambda$. Note that $V_\lambda$ comes equipped with a highest weight vector
$v_\lambda$ and a lowest weight vector $v_{w_0\lambda}$ arising
from the top and lowest fundamental classes in the cohomology
$H^{\bullet+d_\lambda}(\ol\Gr{}^\lambda_{G^\vee})$. Hence the dual $G$-module
$V_\lambda^*$ comes equipped with a highest weight vector $v^*_{-w_0\lambda}$
and a lowest weight vector $v^*_{-\lambda}$.

We consider the fundamental cycle $[\ol\Gr{}^\lambda_{G^\vee}]\in
H_\bullet^{G^\vee_\CO\rtimes\BC^\times}(\Gr_{G^\vee})$. We want to describe
$\bbeta[\ol\Gr{}^\lambda_{G^\vee}]\in\CT_\hbar(G)$. To this end note that
the matrix coefficient $\langle gv_{w_0\lambda},v^*_{-\lambda}\rangle\in\BC[G]$
is $U_-\times U_-$-invariant and hence gives rise to the same named element
in $\CT_\hbar(G)$.

\lem{beta}
$\bbeta[\ol\Gr{}^\lambda_{G^\vee}]=
\langle gv_{w_0\lambda},v^*_{-\lambda}\rangle\in\CT_\hbar(G)$.
\elem

\prf
Dually to~\refe{cocycle}, we have a canonical morphism from the intersection
cohomology sheaf to the shifted dualizing sheaf
$\on{IC}^\lambda\to\bomega_{\ol\Gr{}^\lambda_{G^\vee}}[-d_\lambda]$ and hence
\begin{multline}
\label{cycle}
c\colon
H^\bullet_{G^\vee_\CO\rtimes\BC^\times}(\ol\Gr{}^\lambda_{G^\vee},\on{IC}^\lambda)\to
H^{\bullet+d_\lambda}_{G^\vee_\CO\rtimes\BC^\times}(\ol\Gr{}^\lambda_{G^\vee},
\bomega_{\ol\Gr{}^\lambda_{G^\vee}})\to\\
\to H^{\bullet+d_\lambda}_{G^\vee_\CO\rtimes\BC^\times}(\Gr_{G^\vee},\bomega_{\Gr_{G^\vee}})=
H_{\bullet+d_\lambda}^{G^\vee_\CO\rtimes\BC^\times}(\Gr_{G^\vee}).
\end{multline}
According to~\cite[Section~3.2,~Lemma~1]{bf08},
$H^\bullet_{G^\vee_\CO\rtimes\BC^\times}(\ol\Gr{}^\lambda_{G^\vee},\on{IC}^\lambda)$
contains a $H^\bullet_{G^\vee_\CO\rtimes\BC^\times}(pt)$-submodule
$H^\bullet_{G^\vee_\CO\rtimes\BC^\times}(pt)\otimes\BC v_{w_0\lambda}$,
and $c(1\otimes v_{w_0\lambda})=[\ol\Gr{}^\lambda_{G^\vee}]$.

Recall the functor $\fF$ of~\cite[6.4]{bf08} from the equivariant derived
category $D_{G^\vee_\CO\rtimes\BC^\times}(\Gr_{G^\vee})$ to the category of asymptotic
Harish-Chandra $U_\hbar(\fg)$-bimodules. It takes $\on{IC}^\lambda$ to the
free bimodule $U_\hbar(\fg)\otimes V_\lambda$, and $\bomega_{\Gr_{G^\vee}}$ to the
hamiltonian reduction $\CK=\kappa_\hbar(U_\hbar(\fg)\otimes\BC[G])$
(see~\cite[Proposition~4]{bf08}). Furthermore, it takes the composed morphism
$\on{IC}^\lambda\to\bomega_{\ol\Gr{}^\lambda_{G^\vee}}[-d_\lambda]\to\bomega_{\Gr_{G^\vee}}[-d_\lambda]$
to the morphism $b\colon U_\hbar(\fg)\otimes V_\lambda\to\CK$ arising from
$U_\hbar(\fg)\otimes V_\lambda\ni
u\otimes v\mapsto u\otimes v\otimes v^*_{-\lambda}\in
U_\hbar(\fg)\otimes V_\lambda\otimes V_\lambda^*\subset
U_\hbar(\fg)\otimes\BC[G]$. Finally, it takes the induced morphism on the
equivariant cohomology to $\kappa_\hbar(b)\colon
\kappa_\hbar(U_\hbar(\fg)\otimes V_\lambda)\to\kappa_\hbar(\CK)$. Hence
$1\otimes v_{w_0\lambda}$ goes to the image of
$1\otimes v_{w_0\lambda}\otimes v^*_{-\lambda}$ in $\kappa_\hbar(\CK)$, that is to
$\langle gv_{w_0\lambda},v^*_{-\lambda}\rangle\in\CT_\hbar(G)$.
\epr

\ssec{clas}{The classical limit}
The quotient algebra $\CT(G):=\CT_\hbar(G)/\hbar$ is a Poisson algebra
containing a maximal Poisson commutative subalgebra $ZU_\hbar(\fg)/\hbar$ ({\em classical
Toda lattice}). We denote the spectrum of the commutative algebra
$\CT(G)$ by $\fZ(G)$. The homomorphism $\tau_G^L$ of~\refss{homo} reduced
modulo $\hbar$ gives rise the the same named homomorphism
$\tau_G^L\colon \CT(G)\to\CT(L)$ and a morphism $\fz_L^G\colon \fZ(L)\to\fZ(G)$.
In the setup of~\refss{taipei} we have
$\CT^n=\CT(GL(n)),\ \fZ^n=\on{Spec}\CT^n$, and we obtain a morphism
$\fz_{k,l}\colon \fZ^k\times\fZ^l\to\fZ^{k+l}$.

The isomorphism $\bbeta$ of~\refss{homology} reduced modulo $\hbar$ gives
rise to the same named isomorphism
$\bbeta\colon H_\bullet^{G_\CO}(\Gr_G)\iso\CT^n$
(cf.\ also~\cite[Theorem~2.12]{bfm05} and~\cite[Theorem~6.3]{t14}).
We also have an isomorphism $\Xi\colon \BC[\oZ^n]\iso H_\bullet^{G_\CO}(\Gr_G)$
of~\cite[Theorem~3.1]{bfn16}
where $\oZ^n$ stands for the open zastava space of degree $n$ based maps
from $(\BP^1,\infty)$ to the flag variety of $SL(2)$ with a marked point of
the upper triangular Borel subgroup. Let $\iota\colon \oZ^n\iso\oZ^n$ be the
Cartan involution of~\cite[1.4(3)]{bdf14}.
We denote the composition
of the above isomorphisms at the level of spectra by
$\Upsilon=\iota\circ\on{Spec}\Xi\circ\on{Spec}\bbeta\colon \fZ^n\iso\oZ^n$.

In elementary terms, $\oZ^n$ is the moduli space of pairs of relatively
prime polynomials $(Q,R)\in\BC[z]$ such that $Q$ is monic of degree $n$,
and $\deg(R)<n$.
The embedding $\Psi\colon \oZ^n\hookrightarrow SL(2,\BC[z])$
of~\cite[2(xi),2(xii)]{bfn16} takes $(Q,R)$ to a unique matrix
$\left(\begin{array}{cc}Q&R'\\ R&Q'\end{array}\right)$ of determinant 1
such that $\deg{R'}<n,\ \deg{Q'}<n-1$. The Cartan involution $\iota$
is intertwined by $\Psi$ with the matrix transposition.
The multiplication morphism
$\bmu\colon \oZ^k\times\oZ^l\to\oZ^{k+l}$ of~\cite[2(vi)]{bfn16} is
intertwined by $\Psi$ with the matrix multiplication:
$\Psi\left(\bmu((Q_1,R_1),(Q_2,R_2))\right)=\Psi(Q_1,R_1)\cdot\Psi(Q_2,R_2)$.
We have the {\em factorization} projection
$\pi\colon \oZ^n\to\BA^{(n)},\ (Q,R)\mapsto Q$, to the configuration space
of unordered roots of $Q$. The subalgebra $\BC[\BA^{(n)}]\subset\BC[\oZ^n]$
corresponds under $\Upsilon$ to the maximal Poisson commutative subalgebra
$ZU(\mathfrak{gl}_n)\subset\CT^n$.
More precisely, let $(e,h,f)$ be an ${\mathfrak{sl}}_2$-triple in
$\fg={\mathfrak{gl}}_n$ such that $e=\left(\begin{array}{ccccc}0&0&0&\ldots&0\\
1&0&0&\ldots&0\\ 0&1&0&\ldots&0\\ \vdots&\vdots&\ddots&\ddots&\vdots\\
0&\ldots&0&1&0 \end{array}\right)$, and $h$ is diagonal. Let $Z_\fg(f)$ be
the centralizer of $f$ in $\fg$, and let $\Sigma^n=e+Z_\fg(f)\subset\fg$ be
a Kostant slice. Let $\fZ_\fg^G$ be the universal centralizer: the moduli space
of pairs of commuting matrices $(x,g)$ such that $x\in\Sigma^n$, and $g$ is
invertible. We have an isomorphism $\zeta\colon \fZ_\fg^G\iso\oZ^n$ taking
a pair $(x,g)$ to $(Q,R)$ where $Q$ is the characteristic polynomial of $x$,
and $R$ is a unique polynomial of degree less than $n$ such that
$R'(x)=g$. In particular,
\eq{R}
R'(z)=g_{n1}z^{n-1}+g_{n-1,1}z^{n-1}+\ldots+g_{21}z+g_{11}.
\end{equation}
We also have an isomorphism $\eta\colon \fZ_\fg^G\iso\fZ^n$ constructed
as follows. The Killing form identifies $e\in\fn$ with $\psi\in\fn_-^*$.
Under this identification, $\fZ_\fg^G\subset G\times\fg\simeq G\times\fg^*=
T^*G$ (left-invariant identification) lies in the moment level
$\mu^{-1}(\psi,-\psi)$ of the moment map $\mu\colon T^*G\to\fn_-^*\times\fn_-^*$.
The composed projection $\fZ_\fg^G\hookrightarrow\mu^{-1}(\psi,-\psi)
\twoheadrightarrow T^*G/\!\!/(U_-\times U_-;\psi,-\psi)=\fZ^n$ is the desired
isomorphism $\eta$. Finally, $\Upsilon=\zeta\circ\eta^{-1}\colon \fZ^n\iso\oZ^n$.

\th{classi}
The following diagram commutes:
$\begin{CD}
\fZ^k\times\fZ^l @>{\fz_{k,l}}>> \fZ^{k+l}\\
@VV{\Upsilon\times\Upsilon}V @V{\Upsilon}VV\\
\oZ^k\times\oZ^l @>{\bmu}>> \oZ^{k+l}
\end{CD}$.
\eth

\prf
The space $\fZ^n$ is equipped with
a Poisson structure by construction (in fact, it is symplectic).
The space $\oZ^n$ is also equipped with a Poisson (symplectic) structure,
see e.g.~\cite{fkmm}. The isomorphism $\Upsilon\colon \fZ^n\iso\oZ^n$ is
Poisson according to~\cite[Proposition~3.18]{bfn16} (more precisely,
both $\on{Spec}\Xi$ and $\iota$ are anti-Poisson, and $\on{Spec}\bbeta$
is Poisson).
The upper arrow in the diagram of~\reft{classi} is a Poisson morphism by
construction.

\lem{mu-poisson}
$\bmu$ is Poisson.
\elem

\prf
The rational $R$-matrix formula for the Poisson bracket $\{T_{ij}(u),T_{kl}(v)\}=\frac{1}{u-v}(T_{il}(u)T_{kj}(v)-T_{il}(v)T_{kj}(u))$ is well-defined on matrix-valued polynomials and compatible with the multiplication of matrices. Note that
$\Psi(\oZ^n)\subset SL(2,\BC[z])$
forms a Poisson subvariety with respect to this bracket. Hence we get a well-defined Poisson structure on $\oZ^n$ compatible with $\bmu$. On the other hand this bracket on $\oZ^n$ is opposite to the standard one~\cite{fkmm}.
Indeed, it follows from~\cite{fkmm}
that $\{Q(u),Q(v)\}=0$, $\{Q(u),R(v)\}=\frac{-1}{u-v}(Q(u)R(v)-Q(v)R(u))$
and $\{R(u),R(v)\}=0$, hence on the coefficients of $Q$ and $R$ the
Poisson brackets in question are opposite. On the other hand the field of
rational functions on $\oZ^n$ is generated by the coefficients of $Q$ and $R$.
\epr

Hence it suffices to check the commutativity of the diagram of~\reft{classi}
on an appropriate
set of Poisson generators of the coordinate rings. Let
$R(z)=\sum\limits_{k=1}^{n}r_kz^{n-k}$, $R'(z)=\sum\limits_{k=1}^{n}r_k'z^{n-k}$ and
$Q(z)=z^n+\sum\limits_{k=1}^{n}q_kz^{n-k}$. Clearly, the functions
$q_k,r_k,r_k'$ (for $k=1,\ldots,n-1$) generate $\BC[\oZ^n]$ as a commutative
ring. We will also need the functions $y_k,y_k'$ for $k=1,2,\ldots$ defined as
$\frac{R(z)}{Q(z)}=\sum\limits_{k=1}^{\infty}y_kz^{-k}$ and
$\frac{R'(z)}{Q(z)}=\sum\limits_{k=1}^{\infty}y'_kz^{-k}$.
Finally, we set $\frac{1}{Q^2(z)}=\sum\limits_{k=1}^\infty x_kz^{-k}$; note that
$x_1=x_2=\ldots=x_{2n-1}=0$, and $x_{2n}=1$.

\lem{lem-cl-gen}
The elements $r_1, r_1',q_1,q_2$ generate the coordinate ring $\BC[\oZ^n]$ as a Poisson algebra.
\elem

\prf
It is easy to see that $y_k,y'_k,x_k,\ k=1,2,\ldots$, generate $\BC[\oZ^n]$
as a commutative ring. We have $y_1=r_1,\ y'_1=r'_1,\ x_{2n+1}=-2q_1,\
x_{2n+2}=3q_1^2-2q_2$.
We have $\{x_{2n+2},y_k\}=2y_{k+1}+2x_{2n+1}y_k$ and
$\{x_{2n+2},y'_k\}=-2y'_{k+1}-2x_{2n+1}y'_k$, and hence all the functions $y_k,y_k'$
are Poisson expressions of the above generators. Also, we have
$\{y_m,y'_l\}=x_{m+l-1}$, and hence by induction we conclude that $x_k$ are
Poisson expressions of the above generators as well.
\epr

Now we can finish the proof of the theorem. By direct computation we have
$\Upsilon^*(r_1)=-\varDelta$, $\Upsilon^*(r_1')=\varDelta'$, $\Upsilon^*(q_1)=C_1$,  $\Upsilon^*(q_2)=C_2$ (we denote the specializations of the elements from $\CT_\hbar^n$ at $\hbar=0$ by the same symbols for brevity). Also, from the $2\times2$-matrix multiplication we see that $\bmu^*(r_1)=r_1\otimes1$, $\bmu^*(r_1')=1\otimes r_1'$, $\bmu^*(q_1)=q_1\otimes1+1\otimes q_1$, $\bmu^*(q_2)=q_2\otimes1+1\otimes q_2+q_1\otimes q_1+r_1'\otimes r_1$. So the theorem follows from \refp{25}.
\epr

\rem{SL}
We assume now $G$ is semisimple, and $T\subset L\subset G$ is a Levi subgroup
containing a Cartan torus. The classical Toda system is the
projection $\fZ(L)\to\fh^*/W_L$ (where $\fh$ is the Lie algebra of $T$).
Let $\Gamma_L^G\subset\fZ(L)\times\fZ(G)$ be the graph of $\fz_L^G$.
Then the projection $\on{pr}_{L,G}\colon \Gamma_L^G\to\fh^*/W_L\times\fh^*/W_G$
is generically finite. If $L\subset G=SL(n)$ corresponds to a decomposition
$n=k+l$, the degree of $\on{pr}_{L,G}$ equals $\binom{n-2}{k-1}$ (D.~Gaiotto,
private communication). If $L=T\subset G=SL(n)$, the degree of
$\on{pr}_{T,SL(n)}$ equals $1,1,2,4,11,33,120,470,2107,10189$ for
$n=2,3,4,5,6,7,8,9,10,11$ respectively (E.~Rains, private communication).
\erem

\ssec{some}{Some equivariant homology classes}
We will need a special case of~\refl{beta}.
Recall the setup of~\refss{homology}, and let $G=GL(n)\cong G^\vee$.
Let $\Gr_G^{\varpi_1}$ (resp.\ $\Gr_G^{-\varpi_1}$) be the closed $G_\CO$-orbit in
$\Gr_G$ formed by all the lattices $L$ such that $z\CO^n\subset L\subset\CO^n$
and $\dim_\BC\CO^n/L=1$ (resp.\ $\CO^n\subset L\subset z^{-1}\CO^n$ and
$\dim_\BC L/\CO^n=1$). The fundamental class of $\Gr_G^{\varpi_1}$
(resp.\ $\Gr_G^{-\varpi_1}$) in $H_\bullet^{G_\CO\rtimes\BC^\times}(\Gr_G)$ will be
denoted $[\Gr_G^{\varpi_1}]$ (resp.\ $[\Gr_G^{-\varpi_1}]$).

Recall the elements $\varDelta',\varDelta\in\CT_\hbar^n$ introduced
in~\refss{taipei}.

\lem{betas} We have $\bbeta[\Gr_G^{\varpi_1}]=\varDelta'$ and $\bbeta[\Gr_G^{-\varpi_1}]=(-1)^{n-1}\varDelta$.
\elem

\prf
This follows from \refl{beta} up to a multiplicative constant.
To determine the constants,
recall that we have chosen the principal nilpotent $e\in\fn$ to be the sum of elementary matrices $e=\sum\limits_{i=1}^n e_{i+1,i}\in\fg=\fgl_n$. The operator of multiplication by the first Chern class of the tautological $G^\vee$-bundle is identified with the action of the principal nilpotent $e\in\fg$ via the isomorphism $H^\bullet_{G^\vee_\CO\rtimes\BC^\times}(\ol\Gr{}^\lambda_{G^\vee},\on{IC}^\lambda)=V_\lambda$. For $\lambda=\pm\varpi_1$ we have $\ol\Gr{}^\lambda_{G^\vee}\simeq\BP^{n-1}$,
hence the top-dimensional fundamental class of $\ol\Gr{}^\lambda_{G^\vee}$ gets identified with $e^{n-1}v_{w_0\lambda}$. The latter is $v_n$ for $\lambda=\varpi_1$,
and $(-1)^{n-1}v_1^*$ for $\lambda=-w_0\varpi_1$.
\epr

\ssec{bis}{Bispectrality}
This section is not used in what follows.

We will need the homomorphism $(\iota_*)^{-1}$ from
$H_\bullet^{G_\CO\rtimes\BC^\times}(\Gr_G)$ to a certain localization of
$\left(H_\bullet^{T_\CO\rtimes\BC^\times}(\Gr_T)\right)^{\fS_n}$, see~\cite[A(i)]{bfn16}.
The convolution algebra $\CA_\hbar(T,0):=H_\bullet^{T_\CO\rtimes\BC^\times}(\Gr_T)$ is a
$\BC[\hbar]$-algebra generated by $w_r,\sfu_r^{\pm1},\ 1\leq r\leq n$,
with relations $[\sfu_r^{\pm1},w_s]=\pm\delta_{r,s}\hbar\sfu^{\pm}_r$.
It satisfies the Ore condition with respect to the set
$\{w_r-w_s+m\hbar,\ 1\leq r\ne s\leq n,\ m\in\BZ\}$, and the corresponding localization is
denoted $\tilde\CA_\hbar$. So we have the homomorphism
$(\iota_*)^{-1}\colon H_\bullet^{G_\CO\rtimes\BC^\times}(\Gr_G)\to
(\tilde\CA_\hbar)^{\fS_n}$.

We define a certain space of formal functions on the diagonal torus
$T\subset G$, containing Whittaker functions.
Let $t_1,\ldots,t_n$ be the diagonal matrix elements considered
as functions on the diagonal torus $T$. Let $\fh$ (resp.\ $\fb$) stand for
the Lie algebra of $T$ (resp.\ $B$). Let $\fR:=
\BC(\fh^*\times\BA^1_{t_1}\times\BA^1_\hbar)[[t_1t_2^{-1},\ldots,t_{n-1}t_n^{-1}]]$
be the ring of formal Taylor series in $t_1t_2^{-1},\ldots,t_{n-1}t_n^{-1}$
with coefficients in the field of rational functions on the product of $\fh^*$
and a line with coordinate $t_1$ and a line with coordinate $\hbar$.
The ring $\CT_\hbar^n\subset\CalD_\hbar(T)$ acts on $\fR$ naturally.
Also, the ring $\tilde\CA_\hbar$ acts on $\fR$ via
its action on $\BC(\fh^*\times\BA^1_\hbar)$ by the difference operators.
Namely, let $w_1=e_{11},\ldots,w_n=e_{nn}$ be the elementary matrices considered
as elements of the diagonal Cartan Lie algebra $\fh$, i.e.\ linear functions
on $\fh^*$. Then we set
$\sfu_r^{\pm1}(w_s)=w_s\pm\hbar\delta_{rs},\ 1\leq r,s\leq n$.
Let $\CR$ be a free rank one $\fR$-module with generator
$t_1^{w_1/\hbar}\cdots t_n^{w_n/\hbar}$. Then $\CT_\hbar^n\subset\CalD_\hbar(T)$
and $\tilde\CA_\hbar$ act on $\CR$.

We consider the generic universal Verma module
$\CM_\hbar(-\rho)=U_\hbar(\fg)\otimes_{U_\hbar(\fb)}\BC(\fh^*\times\BA^1_\hbar)(-\rho)$
where $\BC(\fh^*\times\BA^1_\hbar)(-\rho)$ is a $U_\hbar(\fb)$-module which
factors through the $U_\hbar(\fh)=\BC[\fh^*\times\BA^1_\hbar]$-module where
$x\in\fh$ acts by multiplication by $x-\hbar\rho(x)$ (and $\rho\in\fh^*$ is
the halfsum of the positive roots). It is equipped with the
$\BC(\fh^*\times\BA^1_\hbar)$-valued Shapovalov form $(,)$. The
$\BC(\fh^*\times\BA^1_\hbar)$-vector space of $(U_\hbar(\fn_-),\psi)$-coinvariants
in $\CM_\hbar(-\rho)$ is 1-dimensional, and any coinvariant is proportional
to the Shapovalov scalar product with the Whittaker vector $\fw$ in a
completion of $\CM_\hbar(-\rho)$. More precisely, let
$\psi_+\colon U_\hbar(\fn)\to\BC[\hbar]$ be a homomorphism such that
$\psi_+(e_{i+1,i})=1$ for any $i=1,\ldots,n-1$. Then there is a unique vector
$\fw=\sum_{\unl{d}\in\BN^{n-1}}\fw_{\unl{d}}\in\widehat\CM_\hbar(-\rho)$
(an infinite sum of the weight components)
such that the highest weight component $\fw_{\unl0}$ of $\fw$ is
$1\in\BC(\fh^*\times\BA^1_\hbar)$, and $u\fw=\psi_+(u)\fw$ for any
$u\in U_\hbar(\fn)$. Finally, the Whittaker function $\fW$ is defined as the
Shapovalov scalar product $\fW:=\prod_{r=1}^nt_r^{w_r/\hbar+r-1}
\sum_{\unl{d}\in\BN^{n-1}}(\fw_{\unl{d}},\fw_{\unl{d}})
\prod_{s=1}^{n-1}(t_s/t_{s+1})^{d_s}\in\CR$.

\prop{bispec}
We have $\bbeta(h)\fW=(\iota_*)^{-1}(h)\fW$ for any
$h\in H_\bullet^{G_\CO\rtimes\BC^\times}(\Gr_G)$.
\eprop

\prf
We have $C_1\fW=(\sum_{r=1}^nw_r)\fW,\
C_2\fW=(\sum_{1\leq r<s\leq n}w_rw_s+(\rho_n,\rho_n)\hbar^2)\fW$
(notations of~\refss{taipei};
$\rho_n:=(\frac{n-1}{2},\frac{n-3}{2},\ldots,\frac{1-n}{2})$).
Also, we have
$\varDelta'\fW=(\sum_{r=1}^n\prod_{s\ne r}(w_r-w_s)^{-1}\sfu_r)\fW,\
\varDelta\fW=(\sum_{r=1}^n\prod_{s\ne r}(w_r-w_s)^{-1}\sfu_r^{-1})\fW$
according to~\cite[Theorem~3 and~(6.7)]{vde15}.
According to~\cite[(A.3),(A.4)]{bfn16},
$(\iota_*)^{-1}[\Gr_G^{\varpi_1}]=\sum_{r=1}^n\prod_{s\ne r}(w_r-w_s)^{-1}\sfu_r,\
(\iota_*)^{-1}[\Gr_G^{-\varpi_1}]=\sum_{r=1}^n\prod_{s\ne r}(w_s-w_r)^{-1}\sfu_r^{-1}$.
Since $C_1,C_2,\varDelta',\varDelta$ generate $\CT^n_\fh[\hbar^{-1}]$,
the proposition follows from~\refl{betas}.
\epr

\sec{shifted-yangian}{Shifted Yangians}
In this section we consider the family of algebras known as shifted Yangians, following \cite[Appendix B]{bfn16}.  Our main goal is to prove a PBW theorem for these algebras.

Let $ \fg $ denote a simply-laced semisimple Lie algebra (of finite type) with simple roots $ \{ \alpha_i \}_{i \in I} $.  We write $ \alpha_i \cdot \alpha_j $ for the usual inner product of these simple roots.

\begin{Def}
The {\em Cartan doubled Yangian} $Y_\infty := Y_\infty(\fg)$ is defined to be the $ \BC $--algebra with generators $ E_i^{(q)}, F_i^{(q)}, H_i^{(p)} $ for $ i\in I$, $ q > 0 $ and $ p \in \BZ $, with relations
\begin{align*}
[H_i^{(p)}, H_j^{(q)}] &= 0,  \\
[E_i^{(p)}, F_j^{(q)}] &=  \delta_{ij} H_i^{(p+q-1)}, \\
[H_i^{(p+1)},E_j^{(q)}] - [H_i^{(p)}, E_j^{(q+1)}] &= \frac{\alpha_i \cdot \alpha_j}{2} (H_i^{(p)} E_j^{(q)} + E_j^{(q)} H_i^{(p)}) , \\
[H_i^{(p+1)},F_j^{(q)}] - [H_i^{(p)}, F_j^{(q+1)}] &= -\frac{\alpha_i \cdot \alpha_j}{2} (H_i^{(p)} F_j^{(q)} + F_j^{(q)} H_i^{(p)}) , \\
[E_i^{(p+1)}, E_j^{(q)}] - [E_i^{(p)}, E_j^{(q+1)}] &= \frac{\alpha_i \cdot \alpha_j}{2} (E_i^{(p)} E_j^{(q)} + E_j^{(q)} E_i^{(p)}), \\
[F_i^{(p+1)}, F_j^{(q)}] - [F_i^{(p)}, F_j^{(q+1)}] &= -\frac{\alpha_i \cdot \alpha_j}{2} (F_i^{(p)} F_j^{(q)} + F_j^{(p)} F_i^{(q)}),\\
i \neq j, N = 1 - \alpha_i \cdot \alpha_j \Rightarrow
\operatorname{sym} &[E_i^{(p_1)}, [E_i^{(p_2)}, \cdots [E_i^{(p_N)}, E_j^{(q)}]\cdots]] = 0, \\
i \neq j, N = 1 - \alpha_i \cdot \alpha_j \Rightarrow
\operatorname{sym} &[F_i^{(p_1)}, [F_i^{(p_2)}, \cdots [F_i^{(p_N)}, F_j^{(q)}]\cdots]] = 0.
\end{align*}
\end{Def}

\rem{Kac-Moody case}
Although we have assumed $\fg$ to be simply-laced and finite type, we expect that our results hold in greater generality (e.g. certain Kac-Moody algebras).  Indeed, we make use of two results for the ordinary Yangian $Y = Y(\fg)$: (1) the PBW theorem for $Y$, and (2) the existence of a coproduct $\Delta: Y\rightarrow Y\otimes Y$. If $\fg$ is such that (1) and (2) are known to hold, then the results in this section and the next should hold.  We will not pursue this direction further here.
\erem

We denote by $Y_\infty^{>}, Y_\infty^{\geq}$ the subalgebras of $Y_\infty$ generated by the $E_i^{(q)}$ (resp. $E_i^{(q)}$ and $H_i^{(p)}$).  Likewise we denote by $Y_\infty^<, Y_\infty^{\leq}$ the subalgebras generated by the $F_i^{(q)}$ (resp. $F_i^{(q)}, H_i^{(p)}$).  Also denote by $\Cartan_\infty$ the subalgebra generated by the $H_i^{(p)}$.  We will use similar notation for the various quotients of $Y_\infty$ that we define below.

\rem{maps from Yangian}
There is a surjective homomorphism $Y^> \twoheadrightarrow Y_\infty^>$, where $Y^>$ is the analogous subalgebra of the ordinary Yangian $Y$.  This map is defined by $E_i^{(q)}\mapsto E_i^{(q)}$.
\erem

\rem{egamma}
Consider a positive root $\beta$, and pick any decomposition $\beta = \alpha_{i_1} +\ldots + \alpha_{i_\ell}$ into simple roots so that the element $ [e_{i_1}, [e_{i_2}, \ldots, [ e_{i_{\ell-1}}, e_{i_\ell}]\cdots] $ is a non-zero element of the root space $\fg_\beta$.  Consider also $q>0$ and a decomposition $q + \ell - 1 = q_1 +\ldots + q_\ell$ into positive integers.  Then we define a corresponding element of $Y_\infty$:
\eq{eq: other root vectors}
E_\beta^{(q)} :=  [ E_{i_1}^{(q_1)}, [E_{i_2}^{(q_2)},\ldots [E_{i_{\ell-1}}^{(q_{\ell-1})}, E_{i_\ell}^{(q_\ell)}]\cdots ].
\end{equation}
This element, which we call a PBW variable, depends on the choices above.  However, we will fix arbitrarily such a choice for each $ \beta $ and $ q $.

Similarly, we define PBW variables $F_\beta^{(q)}$.
\erem

\defe{def: shifted yangians}
For any coweight $\mu$, the shifted Yangian $Y_\mu$ is defined to be the quotient of $Y_\infty$ by the relations $H_i^{(p)}=0$ for all $p<-\langle \mu,\alpha_i\rangle$ and $H_i^{(-\langle \mu,\alpha_i\rangle)}=1$.
\edefe

\rem{Ordinary Yangian}
The algebra $ Y = Y_0 $ is the usual Yangian with its standard Drinfeld presentation (except that upper indices are shifted by 1).
\erem

\rem{maps from Yangian 2}
The homomorphism from \refr{maps from Yangian} can be extended to a surjection $Y^\geq \twoheadrightarrow Y_\mu^\geq$, defined by $E_i^{(q)}\mapsto E_i^{(q)}$ and $H_i^{(q)} \mapsto H_i^{(-\langle \mu,\alpha_i\rangle+q)}$ for $q>0$.  Similarly, there is a surjection $Y^\leq \twoheadrightarrow Y_\mu^\leq$.
\erem

There are natural ``shift homomorphisms'' between these algebras:
\prop{shiftmaps}
Let $\mu$ be a coweight, and $\mu_1,\mu_2$ be antidominant coweights. Then there exists a homomorphism $\iota_{\mu,\mu_1,\mu_2}: Y_\mu \longrightarrow Y_{\mu+\mu_1+\mu_2}$ defined by
\eq{shiftmaps}
H_i^{(r)}\mapsto H_i^{(r- \langle \mu_1+\mu_2,\alpha_i\rangle)},\ \ E_i^{(r)}\mapsto E_i^{(r-\langle \mu_1,\alpha_i\rangle)},\ \ F_i^{(r)}\mapsto F_i^{(r-\langle \mu_2 ,\alpha_i\rangle)}.
\end{equation}
\eprop

\prf Immediate from~\refd{def: shifted yangians}.
\epr

\rem{remark:shiftmaps}
Given our present conventions, for $\mu$ dominant, the shifted Yangian as defined in \cite[Section 3.6]{ktwwy} as a subalgebra of $ Y$, can be identified with the image of the shift homomorphism $\iota_{\mu,0, -\mu}\colon Y_\mu \longrightarrow Y_0 = Y$ (here we use \refc{cor: injectivity of shift maps} to see that the map is injective).  On the other hand, when $ \mu $ is not dominant, these shifted Yangians are not subalgebras of $ Y $ and their definition first appeared \cite[Appendix B]{bfn16}.

The definition of shifted Yangians was originally inspired by the work of Brundan-Kleshchev \cite{BK}, who considered shifted Yangians inside the $\mathfrak{gl}_n$ Yangian.
\erem

\begin{Rem}\label{lnH}
Set $S_i^{(-\langle \mu,\alpha_i\rangle +1)} =H_i^{(-\langle \mu,\alpha_i\rangle +1)}$ and
\eq{def: S_i}
S_i^{(-\langle \mu,\alpha_i\rangle +2)} = H_i^{(-\langle \mu,\alpha_i\rangle +2)} - \tfrac{1}{2}\big(H_i^{(-\langle \mu,\alpha_i\rangle +1)}\big)^2
\end{equation}
For $r\geq 1$, it is not hard to check that
\begin{align*}
[S_i^{(-\langle \mu, \alpha_i \rangle +2)} ,E_j^{(r)}]&=(\alpha_i \cdot \alpha_j)  E_j^{(r+1)}, \\
[S_i^{(-\langle \mu, \alpha_i \rangle +2)} ,F_j^{(r)}]&=-(\alpha_i \cdot \alpha_j)  F_j^{(r+1)}.\\
\end{align*}
\end{Rem}

\begin{Lem}\label{4generators}
Let $\mu$ be an antidominant coweight. As a unital associative algebra, $Y_\mu$ is generated by $E_i^{(1)},F_i^{(1)},S_i^{(-\langle \mu,\alpha_i\rangle + 1)}=H_i^{(-\langle \mu,\alpha_i\rangle+1)}$ and $S_i^{(-\langle \mu,\alpha_i\rangle +2)}=H_i^{(-\langle \mu,\alpha_i\rangle+2)} -\frac{1}{2}(H_i^{(-\langle \mu,\alpha_i\rangle+1)})^2$. Alternatively, $Y_\mu$ is also generated by $E_i^{(1)},F_i^{(1)}, H_i^{(-\langle \mu,\alpha_i\rangle +k)}$ ($k=1,2$). In particular, $Y_\mu$ is finitely generated.
\end{Lem}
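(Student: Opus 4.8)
The plan is to show that the subalgebra $A \subseteq Y_\mu$ generated by the listed elements already contains the images of all the generators $E_i^{(q)}, F_i^{(q)}, H_i^{(p)}$ of $Y_\infty$, whence $A = Y_\mu$. Write $a_i := -\langle \mu, \alpha_i\rangle$; since $\mu$ is antidominant we have $a_i \geq 0$ for every $i \in I$. First I would observe that the two proposed generating sets coincide: the elements $S_i^{(a_i+1)} = H_i^{(a_i+1)}$ and $S_i^{(a_i+2)} = H_i^{(a_i+2)} - \tfrac12\big(H_i^{(a_i+1)}\big)^2$ are related to $H_i^{(a_i+1)}, H_i^{(a_i+2)}$ by an invertible polynomial change of variables (the inverse being $H_i^{(a_i+2)} = S_i^{(a_i+2)} + \tfrac12 (S_i^{(a_i+1)})^2$), so, adjoining the common elements $E_i^{(1)}, F_i^{(1)}$, the two families generate the same subalgebra. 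It therefore suffices to treat the first generating set.

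Next I would produce all the raising and lowering generators. Since $\fg$ is simply-laced, $\alpha_i \cdot \alpha_i = 2$, so the commutation relations of Remark~\ref{lnH} specialize (taking $j = i$) to $[S_i^{(a_i+2)}, E_i^{(r)}] = 2 E_i^{(r+1)}$ and $[S_i^{(a_i+2)}, F_i^{(r)}] = -2 F_i^{(r+1)}$ for all $r \geq 1$. Starting from $E_i^{(1)}, F_i^{(1)} \in A$ and applying $\on{ad}\big(S_i^{(a_i+2)}\big)$ repeatedly, an immediate induction on $r$ shows that $E_i^{(r)}, F_i^{(r)} \in A$ for all $r \geq 1$ and all $i \in I$.

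Then I would recover the Cartan generators using the defining relation $[E_i^{(p)}, F_i^{(q)}] = H_i^{(p+q-1)}$. Taking $q = 1$ and $p = m$ gives $H_i^{(m)} = [E_i^{(m)}, F_i^{(1)}] \in A$ for every $m \geq 1$. By antidominance $a_i + 1 \geq 1$, so in particular every non-constant Cartan generator $H_i^{(m)}$ with $m \geq a_i + 1$ lies in $A$; the remaining $H_i^{(p)}$ are equal in $Y_\mu$ either to $0$ (for $p < a_i$) or to $1$ (for $p = a_i$) and hence also lie in $A$. Since by construction $Y_\mu$ is generated as an algebra by the images of the generators $E_i^{(q)}, F_i^{(q)}, H_i^{(p)}$ of $Y_\infty$, this gives $A = Y_\mu$; finite generation is then immediate, as $I$ is finite and there are four generators per vertex.

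The argument is essentially bookkeeping of indices, and the one point that genuinely requires care—indeed the only place the hypothesis enters—is the antidominance of $\mu$: it guarantees $a_i \geq 0$, so that the Cartan generators one must produce, namely $H_i^{(m)}$ with $m \geq a_i + 1$, all satisfy $m \geq 1$ and are therefore reachable as brackets $[E_i^{(m)}, F_i^{(1)}]$. Were $\mu$ not antidominant, one would instead need $H_i^{(m)}$ with $a_i + 1 \leq m \leq 0$, which cannot be obtained in this way, and the statement would fail as phrased.
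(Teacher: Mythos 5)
Your proposal is correct and follows essentially the same route as the paper's proof: generate all $E_i^{(r)},F_i^{(r)}$ by iterated bracketing with $S_i^{(-\langle\mu,\alpha_i\rangle+2)}$, then recover the Cartan generators from the $[E,F]$ relation (the paper uses $H_i^{(s)}=[E_i^{(1)},F_i^{(s)}]$ where you use $[E_i^{(m)},F_i^{(1)}]$, an immaterial difference), and pass between the two generating sets by the triangular change of variables relating $S$ and $H$. Your closing remark on where antidominance is used is accurate and consistent with the paper.
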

\begin{proof}
For the first assertion, it is enough to show that $E_i^{(r)},F_i^{(r)}H_i^{(s)}$ lie in the subalgebra generated by $E_i^{(1)},F_i^{(1)}, S_i^{(-\langle \mu,\alpha_i\rangle +k)}$ ($k=1,2$)  for all $r\geq1, s\geq -\langle \mu,\alpha_i\rangle +1$.
This is clear since $E_i^{(r)}=\frac{1}{2}[S_i^{(-\langle \mu,\alpha_i\rangle +2)},E_i^{(r-1)}]$, $F_i^{(r)}=-\frac{1}{2}[S_i^{(-\langle \mu,\alpha_i\rangle +2)},F_i^{(r-1)}]$ for all $r \geq 2$ and since $H_i^{(s)}= [E_i^{(1)},F_i^{(s)}]$ for all $s\geq -\langle \mu,\alpha_i\rangle +1$.

The second assertion follows immediately from the first since the subalgebra generated by $E_i^{(1)}, F_i^{(1)},S_i^{(-\langle \mu,\alpha_i\rangle +k)}$ ($k=1,2$) is contained in the subalgebra generated by the $E_i^{(1)}, F_i^{(1)},H_i^{(-\langle \mu,\alpha_i\rangle +k)}$ ($k=1,2$).
\end{proof}

\ssec{PBW Theorem}{PBW Theorem}

In this section we will prove the PBW theorem for the algebras $Y_\mu$.  This generalizes the well-known case of the ordinary Yangian $Y $ due to Levendorskii \cite{L2}, as well as the case when $\mu$ is dominant \cite[Proposition 3.11]{kwwy}.

For each positive root $\beta$ and $q>0$, consider elements $E_\beta^{(q)}, F_\beta^{(q)} \in Y_\mu$ defined as images under $Y_\infty\twoheadrightarrow Y_\mu$ of those described in \refr{egamma}.  Choose a total order on the set of PBW variables
\eq{total-order-generators}
\left\{ E_\beta^{(q)} : \beta\in \Delta^+, q>0\right\} \cup \left\{ F_\beta^{(q)} : \beta\in \Delta^+, q>0 \right\} \cup \left\{ H_i^{(p)} : i\in I, p> -\la\mu, \alpha_i\ra \right\}
\end{equation}
If $ \mu = 0 $, then ordered monomials in these PBW variables form a basis of $ Y $ by \cite{L2}.

For simplicity we will assume that we have chosen a block order with respect to the three subsets above, i.e. ordered monomials have the form $EFH$.
\prop{PBW monomials span}
$Y_\mu$ is spanned by ordered monomials in the PBW variables.
\eprop
\prf
We first claim that $Y_\mu$ is spanned by elements of the form $ x y$, with $x\in Y_\mu^>$ and $y\in Y_\mu^\leq$.  Indeed, the relation $[E_i^{(p)}, F_j^{(q)}] = \delta_{ij} H_i^{(r+s-1)}$ allows us to reorder products of the generators $E_i^{(p)}$ and $F_j^{(q)}$, while the surjection $Y^\geq \rightarrow Y_\mu^\geq$ from \refr{maps from Yangian 2} shows that we may reorder products of the generators $E_i^{(p)}$ and $ H_i^{(q)}$ (as this is true for the ordinary Yangian $Y$).

Next, we claim that $Y_\mu^>$ is spanned by ordered monomials in the elements $E_\beta^{(q)}$.  To see this, we can again appeal to the surjection $Y^>\rightarrow Y_\mu^>$ together with the PBW theorem for $Y^>$.  Similarly, $Y_\mu^\leq$ is spanned by ordered monomials in the elements $F_\beta^{(q)}$ and $H_i^{(p)}$. Altogether, this proves the proposition.
\epr

\th{th: PBW antidominant}
Let $\mu$ be antidominant.  Then the set of ordered monomials in the PBW variables forms a basis for $Y_\mu$ over $\BC$.
\eth
The proof of the above theorem will be given in \refss{The proof of PBW}.  For now we give two important corollaries:

\cor{cor: PBW general}
For $\mu$ arbitrary, the set of ordered monomials in the PBW variables forms a basis for $Y_\mu$ over $\BC$.
\ecor
\prf
We may choose a shift homomorphism $\iota\colon Y_\mu \longrightarrow Y_{\mu'}$ from  \refp{shiftmaps} such that $\mu'$ is antidominant.  Under $\iota$, the images of the elements $E_\beta^{(q)}, F_\beta^{(q)} \in Y_\mu$ have the form \refe{eq: other root vectors}, i.e. we may consider these images as PBW variables for $Y_{\mu'}$.  In particular, we may extend these images to a full set of PBW variables  for $Y_{\mu'}$.

It follows from \reft{th: PBW antidominant} that the set of ordered monomials in the PBW variables for $Y_\mu$ map bijectively under $\iota$ onto a subset of a basis for $Y_{\mu'}$, so in particular they are linearly independent in $Y_\mu$.  Combined with \refp{PBW monomials span}, this implies that the PBW monomials for $Y_\mu$ form a basis (and also that $\iota$ is injective).
\epr

\cor{cor: injectivity of shift maps}
For any $\mu$ and antidominant $\mu_1,\mu_2$, the shift homomorphism $\iota_{\mu,\mu_1,\mu_2}\colon Y_\mu \longrightarrow Y_{\mu+\mu_1+\mu_2}$ from \refp{shiftmaps} is injective.
\ecor
\prf
Analogous to the proof of the previous corollary.
\epr

\ssec{An auxilliary algebra}{The algebra $\widetilde{Y}$}

\defe{def: Y tilde}
The algebra $\widetilde{Y}$ is defined to be the quotient of $Y_\infty$ by the relations $H_i^{(p)} = 0$ for all $i\in I$ and $p<0$.
\edefe

To distinguish the generators of $\widetilde{Y}$, we will denote them by $\widetilde{E}_i^{(q)}, \widetilde{F}_i^{(q)}, \widetilde{H}_i^{(p)}$.  The following result is a key tool in proving \reft{th: PBW antidominant}:
\lem{PBW Y tilde} \mbox{}
\begin{itemize}
\item[(a)] There is an embedding of algebras $\widetilde{Y}\hookrightarrow Y\otimes_\BC \BC[H_i^{(0)}: i\in I] $, defined by
\eq{eq: embedding Y tilde}
\widetilde{E}_i^{(q)} \longmapsto E_i^{(q)} \otimes H_i^{(0)}, \ \widetilde{F}_i^{(q)} \longmapsto F_i^{(q)}\otimes 1, \ \widetilde{H}_i^{(p)} \longmapsto H_i^{(p)} \otimes H_i^{(0)}.
\end{equation}
\item[(b)] Ordered monomials in the elements of the set
\eq{total-order-generators-tilde}
\left\{ \widetilde{E}_\beta^{(q)} : \beta\in\Delta^+, q>0\right\} \cup \left\{ \widetilde{F}_\beta^{(q)} : \beta\in\Delta^+, q>0\right\} \cup \left\{ \widetilde{H}_i^{(p)} : i\in I, p\geq 0\right\}
\end{equation}
form a basis for $\widetilde{Y}$ over $\BC$.
\end{itemize}
\elem
\prf
Using the relations for $\widetilde{Y}$ as inherited from $Y_\infty$, one can verify that \refe{eq: embedding Y tilde} is a homomorphism.

 The remainder of the proof is analogous to that of \refc{cor: PBW general}: first one shows that PBW monomials span $\widetilde{Y}$ by using the relations among its generators, and second one observes that these monomials map bijectively under \refe{eq: embedding Y tilde} onto a subset of a basis for $Y\otimes_\BC \BC[H_i^{(0)}]$ (using the PBW theorem for $Y$).
\epr

\cor{freeness-over-polynomials}
Let $\mu$ be an antidominant coweight.  Then $\widetilde{Y}$ is free as a right module over the polynomial ring
\begin{equation}
\BC[ \widetilde{H}_i^{(s)} : i\in I, 0\leq s\leq - \la \mu,\alpha_i\ra ],
\end{equation}
with basis consisting of ordered monomials in the set
\eq{total-order-generators-tilde-mu}
\{ \widetilde{E}_\gamma^{(r)} : \gamma\in \Delta_+, r\geq 1\} \cup \{ \widetilde{F}_\gamma^{(r)}: \gamma\in \Delta_+, r\geq 1\} \cup \{ \widetilde{H}_i^{(s)} : i\in I, s > -\la\mu,\alpha_i \ra \}.
\end{equation}
\ecor

\ssec{The proof of PBW}{The PBW Theorem in the antidominant case}

Let $\mu$ be antidominant.  The following result is immediate from the definitions of $Y_\mu$ and $\widetilde{Y}$ as quotients of $Y_\infty$:
\lem{Y tilde to Y mu}
There is a surjective homomorphism $\widetilde{Y} \twoheadrightarrow Y_\mu$ defined by $\widetilde{E}_i^{(q)}\mapsto E_i^{(q)}, \widetilde{F}_i^{(q)}\mapsto F_i^{(q)}$ and $\widetilde{H}_i^{(p)}\mapsto H_i^{(p)}$.  The kernel of this homomorphism is the ideal
\begin{equation}
I_\mu := \left\la \widetilde{H}_i^{(s)} -\delta_{s,-\la\mu,\alpha_i\ra} : i\in I, 0\leq s \leq -\la \mu, \alpha_i\ra \right\ra_{\text{two-sided}}.
\end{equation}
\elem

\lem{two-sided=left}
$I_\mu$ is equal to the left ideal
\eq{left-ideal}
\left\la\widetilde{H}_i^{(s)} -\delta_{s,-\la\mu,\alpha_i\ra} : i\in I, 0\leq s \leq -\la \mu, \alpha_i\ra \right\ra_{\mathrm{left}}.
\end{equation}
\elem
\prf
Denote this left ideal by $I_\mu^{\text{left}}$.  We will prove the claim by showing that $I_\mu^{\text{left}}$ is also a right ideal.

In $\widetilde{Y}$, we have the relations
\eq{rels-1}
[\widetilde{H}_i^{(0)}, \widetilde{E}_j^{(r)}] = 0, \ \ [\widetilde{H}_i^{(1)}, \widetilde{E}_j^{(r)}] = (\alpha_i, \alpha_j)  \widetilde{E}_j^{(r)} \widetilde{H}_i^{(0)},
\end{equation}
\eq{rels-2}
[\widetilde{H}_i^{(s+1)}, \widetilde{E}_j^{(r)}] = [\widetilde{H}_i^{(s)}, \widetilde{E}_j^{(r+1)}] + \tfrac{(\alpha_i,\alpha_j)}{2} [\widetilde{H}_i^{(s)}, \widetilde{E}_i^{(r)}] + (\alpha_i,\alpha_j)  \widetilde{E}_j^{(r)} \widetilde{H}_i^{(s)}.
\end{equation}
By induction on $s$, it follows that for any $r\geq 1$,
\eq{rels-3}
[\widetilde{H}_i^{(s)}, \widetilde{E}_j^{(r)}] \in \left\la \widetilde{H}_i^{(0)},\ldots,\widetilde{H}_i^{(s-1)} \right\ra_{\text{left}}.
\end{equation}
For $0\leq s\leq -\la\mu,\alpha_i\ra$, we therefore have
\begin{equation}
\big(\widetilde{H}_i^{(s)} -\delta_{s,-\la\mu,\alpha_i\ra} \big) \widetilde{E}_j^{(r)} \in \widetilde{E}_j^{(r)}  \big(\widetilde{H}_i^{(s)} -\delta_{s,-\la\mu,\alpha_i\ra} \big) + I_\mu^{\text{left}} = I_\mu^{\text{left}}.
\end{equation}
It follows that right multiplication by $\widetilde{E}_j^{(r)}$ preserves $I_\mu^{\text{left}}$.  Similarly for $\widetilde{F}_j^{(r)}$, while for $\widetilde{H}_j^{(r)}$ this is clear.   These elements generate $\widetilde{Y}$, so $I_\mu^{\text{left}}$ is a right ideal.
\epr

\begin{proof}[Proof of \reft{th: PBW antidominant}]
Consider the homomorphism
\eq{map-polynomial-ring}
\BC[\widetilde{H}_i^{(s)}: i\in I, 0\leq s\leq -\la\mu,\alpha_i\ra] \longrightarrow \BC,
\end{equation}
defined by $\widetilde{H}_i^{(s)}\mapsto \delta_{s,-\la\mu,\alpha_i\ra}$.  By \refl{two-sided=left}, we see that $Y_\mu$ is the base change of the right module $\widetilde{Y}$ with respect to the map \refe{map-polynomial-ring}:
\begin{equation}
Y_\mu = \widetilde{Y} \otimes_{\BC[\widetilde{H}_i^{(s)}: i\in I, 0 \leq s \leq -\la\mu,\alpha_i\ra]} \BC.
\end{equation}
Recall that $\widetilde{Y}$ is a free right module over $\BC[\widetilde{H}_i^{(s)}: i\in I, 0 \leq s \leq -\la\mu,\alpha_i\ra]$, so the basis from \refc{freeness-over-polynomials} yields a basis for $Y_\mu$ over $\BC$.  This completes the proof of \reft{th: PBW antidominant}.
\end{proof}

\sec{coproduct}{Coproduct}
We continue with $ \fg $ a simply-laced semisimple Lie algebra.

In this section we will describe a family of coproducts for shifted Yangians.  For any splitting $\mu = \mu_1 + \mu_2$, in \reft{generalcoproduct} we will establish the existence of a homomorphism
\eq{eq: coproducts begin}
\Delta_{\mu_1, \mu_2}\colon Y_\mu \longrightarrow Y_{\mu_1} \otimes Y_{\mu_2}
\end{equation}
This generalizes the coproduct for the ordinary Yangian $Y = Y_0$.

\ssec{antidominant coproduct}{A new presentation}
Let $\mu$ be an antidominant coweight. We will follow \cite{L} and define another presentation for $Y_\mu$.

 Fix a decomposition $\mu=\mu_1 + \mu_2$ where the $\mu_i$'s are antidominant coweights.

Denote by $Y_{\mu_1,\mu_2}$ the algebra generated by: $S_i^{(-\langle \mu ,\alpha_i\rangle +1)}$, $S_i^{(-\langle \mu ,\alpha_i\rangle +2)}$, $E_i^{(r)}$ ($1\leq r \leq -\langle \mu_1,\alpha_i \rangle +2$), $F_i^{(r)}$ ($1\leq r \leq -\langle \mu_2,\alpha_i \rangle +2$) for all $i\in I$, with the following relations:
\begin{align}
\label{SS} [S_i^{(k)},S_j^{(l)}]&=0;\\
\label{S1E} [S_i^{(-\langle \mu,\alpha_i\rangle +1)}, E_j^{(r)}]&=(\alpha_i \cdot \alpha_j)   E_j^{(r)}, \hskip1ex 1\leq r \leq \langle \mu_1 , \alpha_j\rangle +1; \\
\label{S1F} [S_i^{(-\langle \mu,\alpha_i\rangle +1)}, F_j^{(r)}]&=-(\alpha_i \cdot \alpha_j)   F_j^{(r)},  \hskip1ex 1\leq r \leq \langle \mu_2 , \alpha_j\rangle +1; \\
\label{S2E} [S_i^{(-\langle \mu, \alpha_i \rangle +2)} ,E_j^{(r)}]&=(\alpha_i \cdot \alpha_j)  E_j^{(r+1)},  \hskip1ex 1\leq r \leq \langle \mu_1 , \alpha_j\rangle +1;\\
\label{S2F} [S_i^{(-\langle \mu, \alpha_i \rangle +2)} ,F_j^{(r)}]&=-(\alpha_i \cdot \alpha_j)  F_j^{(r+1)}, \hskip1ex 1\leq r \leq \langle \mu_2 , \alpha_j\rangle +1;\\
\label{EF} [E_i^{(r)},F_j^{(s)}]=&
\begin{cases}
      0 & i\neq j \\
      0& i=j, r+s < -\langle \mu,\alpha_i \rangle +1 \\
       1& i=j, r+s= -\langle \mu,\alpha_i \rangle +1	\\
       S_i^{(-\langle \mu,\alpha_i \rangle +1)} & i=j, r+s=-\langle \mu,\alpha_i \rangle +2 \\
       S_i^{(-\langle \mu,\alpha_i \rangle +2)} + \frac{1}{2}\big(S_i^{(-\langle \mu,\alpha_i \rangle +1)} \big)^2 & i=j, r+s=-\langle \mu,\alpha_i \rangle +3
   \end{cases}\\
\label{EE} [E_i^{(r+1)},E_j^{(s)}]&=[E_i^{(r)},E_j^{(s+1)}]+\frac{\alpha_i \cdot \alpha_j }{2}(E_i^{(r)}E_j^{(s)} +E_j^{(s)}E_i^{(r)});\\
\label{FF} [F_i^{(r+1)},F_j^{(s)}]&=[F_i^{(r)},F_j^{(s+1)}]-\frac{\alpha_i \cdot \alpha_j }{2}(F_i^{(r)}F_j^{(s)} +F_j^{(s)}F_i^{(r)} );\\
\label{Eserre}\operatorname{ad}(E_i^{(1)}&)^{1-(\alpha_i \cdot \alpha_j) }(E_j^{(1)})=0;\\
\label{Fserre} \operatorname{ad}(F_i^{(1)}&)^{1-(\alpha_i \cdot \alpha_j) }(F_j^{(1)})=0;\\
\label{FSE}  \big[S_i^{(-\langle \mu,\alpha_i\rangle +2)},&[E_i^{(-\langle \mu_1,\alpha_i\rangle +2)},F_i^{(-\langle \mu_2,\alpha_i\rangle +2)}]\big]=0.
\end{align}

For $r\geq 2$ and $s\geq 1$, set
\begin{align*}
E_i^{(r)}&=\frac{1}{2}[S_i^{(-\langle \mu,\alpha_i \rangle +2 )},E_i^{(r-1)}];\\
F_i^{(r)}&=-\frac{1}{2}[S_i^{(-\langle \mu,\alpha_i \rangle +2 )},F_i^{(r-1)}];\\
H_i^{(s)}&= [E_i^{(1)},F_i^{(s)}].
\end{align*}

\begin{Rem}
Note that $H_i^{(s)}= 0$ if $s<-\langle \mu,\alpha_i\rangle$ and $H_i^{(-\langle \mu,\alpha_i\rangle )}=1$.
\end{Rem}

Next, we have the following theorem, whose proof is almost exactly the same as in \cite{L}.

\begin{Thm}\label{newpresentation}
There exists an isomorphism $Y_\mu \longrightarrow Y_{\mu_1,\mu_2}$ of unital associative algebras given by
\begin{align*}
E_i^{(r)} \mapsto E_i^{(r)},F_i^{(r)} \mapsto F_i^{(r)}, H_i^{(s)} \mapsto H_i^{(s)},
\end{align*}
for $r\geq 1$ and $s\geq -\langle \mu,\alpha_i\rangle +1$.
\end{Thm}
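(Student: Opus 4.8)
The plan is to follow Levendorskii \cite{L} and realize the asserted map as an isomorphism by producing an explicit inverse. Concretely, I would construct two homomorphisms
$$\phi\colon Y_{\mu_1,\mu_2}\to Y_\mu, \qquad \psi\colon Y_\mu\to Y_{\mu_1,\mu_2},$$
each sending a named generator to the element of the same name, where $\psi$ is the map in the statement and $\phi$ its candidate inverse. Both maps fix the four elements $E_i^{(1)},F_i^{(1)},S_i^{(-\langle\mu,\alpha_i\rangle+1)},S_i^{(-\langle\mu,\alpha_i\rangle+2)}$, which generate $Y_\mu$ by Lemma \ref{4generators} and which also generate $Y_{\mu_1,\mu_2}$ (the remaining generators of $Y_{\mu_1,\mu_2}$ being produced from them by the recursions defining higher $E_i^{(r)},F_i^{(r)}$). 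Hence $\phi\circ\psi$ and $\psi\circ\phi$ act as the identity on a generating set and are therefore the identity. In this way the entire content of the theorem reduces to the well-definedness of the two maps.

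The construction of $\phi$ is the routine half. Its source carries only the finitely many relations \eqref{SS}--\eqref{FSE}, and each is readily checked among the correspondingly named elements of $Y_\mu$, using the relations of $Y_\infty$ together with the normalizations $H_i^{(p)}=0$ for $p<-\langle\mu,\alpha_i\rangle$ and $H_i^{(-\langle\mu,\alpha_i\rangle)}=1$ imposed in \refd{def: shifted yangians}, and the identities of Remark \ref{lnH}. For instance \eqref{S1E} comes from the $H$-$E$ relation specialized at $p=-\langle\mu,\alpha_i\rangle$, where $H_i^{(-\langle\mu,\alpha_i\rangle)}=1$ makes one commutator vanish and collapses the right-hand side to $(\alpha_i\cdot\alpha_j)E_j^{(r)}$; relations \eqref{S2E} and \eqref{S2F} are exactly Remark \ref{lnH}; the case distinctions in \eqref{EF} follow from $[E_i^{(r)},F_j^{(s)}]=\delta_{ij}H_i^{(r+s-1)}$ together with the definition of $S_i^{(-\langle\mu,\alpha_i\rangle+2)}$; and \eqref{FSE} reduces to $[S_i^{(-\langle\mu,\alpha_i\rangle+2)},H_i^{(-\langle\mu,\alpha_i\rangle+3)}]=0$, which holds because $S_i^{(-\langle\mu,\alpha_i\rangle+2)}$ is a polynomial in the mutually commuting $H_i^{(p)}$. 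Surjectivity of $\phi$ is then immediate from Lemma \ref{4generators}.

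The real work is the construction of $\psi$, i.e. verifying that the full family of relations inherited from $Y_\infty$ holds in $Y_{\mu_1,\mu_2}$, where the higher root vectors and Cartan elements are only defined recursively by $E_i^{(r)}=\tfrac12[S_i^{(-\langle\mu,\alpha_i\rangle+2)},E_i^{(r-1)}]$, $F_i^{(r)}=-\tfrac12[S_i^{(-\langle\mu,\alpha_i\rangle+2)},F_i^{(r-1)}]$ and $H_i^{(s)}=[E_i^{(1)},F_i^{(s)}]$. Following \cite{L}, the strategy is an induction on the level indices: each $Y_\infty$ relation is reduced, by commuting a factor $S_i^{(-\langle\mu,\alpha_i\rangle+2)}$ through and invoking the recursions, to instances at strictly lower level, the base cases being the finite relations \eqref{SS}--\eqref{FSE}. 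The relations \eqref{EE} and \eqref{FF} propagate upward this way, the $H$-$E$ and $H$-$F$ relations follow by combining \eqref{S1E}--\eqref{S2F} with the recursions, and the Serre relations are pushed up from their level-one instances \eqref{Eserre}, \eqref{Fserre} using the $E$-$E$ and $H$-$E$ relations already obtained.

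I expect the main obstacle to be the relation $[E_i^{(p)},F_j^{(q)}]=\delta_{ij}H_i^{(p+q-1)}$. Because the recursions define the Cartan generators through $E$-$F$ brackets, one must check these are consistent across different pairs $(p,q)$ with the same sum $p+q$; this is precisely the purpose of the compatibility relation \eqref{FSE}, which guarantees that the two routes to $H_i^{(-\langle\mu,\alpha_i\rangle+3)}$ (through the $\mu_1$- and $\mu_2$-shifted top generators) coincide, so that the induction closes without over-determination. Once all $Y_\infty$ relations are verified in $Y_{\mu_1,\mu_2}$, the map $\psi$ is well-defined, and together with $\phi$ it yields the claimed isomorphism.
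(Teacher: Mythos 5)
Your proposal is correct and follows the same route as the paper, which gives no written proof at all but simply states that the argument is ``almost exactly the same as in \cite{L}'': namely, the easy check that the finitely many relations \eqref{SS}--\eqref{FSE} hold in $Y_\mu$, plus Levendorskii's level-by-level induction to verify the full family of $Y_\infty$ relations among the recursively defined elements of $Y_{\mu_1,\mu_2}$, with \eqref{FSE} closing the induction for the $E$--$F$ relations. Your outline is a faithful (and more detailed) expansion of exactly that strategy.
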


\ssec{Yangian coproduct}{The coproduct for the ordinary Yangian}
Recall the following theorem going back to Drinfeld \cite{D}:

\th{thm: coproduct for ordinary yangian}
There is a homomorphism $\Delta\colon Y \longrightarrow Y\otimes Y$, defined on the generators (see Lemma \ref{4generators}) by
\begin{align*}
\Delta(X_i^{(1)} ) & = X_i^{(1)} \otimes 1 + 1 \otimes X_i^{(1)}, \qquad \text{for }X = E, F, S, \\
\Delta(S_i^{(2)}) & = S_i^{(2)} \otimes 1  + 1 \otimes S_i^{(2)} - \sum_{\gamma> 0 }  \langle \alpha_i, \gamma\rangle F_\gamma^{(1)} \otimes E_\gamma^{(1)}
\end{align*}
\eth

This formula for the coproduct was given without a proof
in~\cite[(2.8)--(2.11)]{kt96}.
The proof is given in a recent paper of Guay-Nakajima-Wendlandt \cite[Theorem~4.1]{GN}.

\ssec{Definition of coproduct}{The coproduct in the antidominant case}
Let $\mu, \mu_1, \mu_2$ be antidominant coweights with $\mu = \mu_1 + \mu_2$. We wish to define a homomorphism $\Delta_{\mu_1,\mu_2}: Y_\mu \longrightarrow Y_{\mu_1} \otimes Y_{\mu_2}$ (we will denote $\Delta = \Delta_{\mu_1,\mu_2}$ when the algebras involved are clear).  To do so, we define it on generators as follows:
\begin{align*}
\Delta(E_i^{(r)})&= E_i^{(r)}\otimes 1, \hskip1ex 1\leq r \leq -\langle \mu_1,\alpha_i \rangle ;\\
\Delta(E_i^{(-\langle \mu_1,\alpha_i \rangle+1)})&=E_i^{(-\langle \mu_1 ,\alpha_i \rangle +1)}\otimes 1 + 1\otimes E_i^{(1)};\\
\Delta(E_i^{(-\langle \mu_1,\alpha_i \rangle+2)})&=E_i^{(-\langle \mu_1,\alpha_i\rangle+2)}\otimes1+1\otimes E_i^{(2)}+S_i^{(-\langle \mu_1,\alpha_i\rangle+1)}\otimes E_i^{(1)}\\
&\hskip2ex -\sum\limits_{\gamma>0}F_\gamma^{(1)}\otimes[E_i^{(1)},E_\gamma^{(1)}];\\
\Delta(F_i^{(r)})&= 1\otimes F_i^{(r)}, \hskip1ex 1\leq r \leq -\langle \mu_2,\alpha_i \rangle; \\
\Delta(F_i^{(-\langle \mu_2,\alpha_i \rangle+1)})&=1\otimes F_i^{(-\langle \mu_2 ,\alpha_i \rangle +1)}+ F_i^{(1)}\otimes 1;\\
\Delta(F_i^{(-\langle \mu_2,\alpha_i \rangle+2)})&=1\otimes F_i^{(-\langle \mu_2,\alpha_i\rangle+2)}+ F_i^{(2)}\otimes 1 +F_i^{(1)}\otimes S_i^{(-\langle \mu_2,\alpha_i\rangle+1)}\\
&\hskip2ex +\sum\limits_{\gamma>0}[F_i^{(1)},F_\gamma^{(1)}]\otimes E_\gamma^{(1)};\\
\Delta(S_i^{(-\langle \mu,\alpha_i\rangle +1 )})&=S_i^{(-\langle \mu_1,\alpha_i\rangle +1 )}\otimes 1 + 1\otimes S_i^{(-\langle \mu_2,\alpha_i\rangle +1 )};\\
\Delta(S_i^{(-\langle \mu,\alpha_i\rangle +2 )})&= S_i^{(-\langle \mu_1,\alpha_i\rangle +2 )}\otimes 1 + 1\otimes S_i^{(-\langle \mu_2,\alpha_i\rangle +2 )} - \sum\limits_{\gamma>0}\langle \alpha_i ,\gamma \rangle F_\gamma^{(1)} \otimes E_\gamma^{(1)}.
\end{align*}

\begin{Rem}
\label{rem: coproduct existence}
When $\mu=\mu_1 = \mu_2= 0$, it is not hard to see that $\Delta_{0,0}$ agrees with the coproduct from \reft{thm: coproduct for ordinary yangian}, and hence is well-defined.

Recall that there are shift maps $\iota_{0,\mu_1,0}$ and $\iota_{0,0,\mu_2}$, by \refp{shiftmaps}. It is not hard to see that, for $k=1,2$,
\begin{align*}
\Delta(S_i^{(-\langle \mu,\alpha_i\rangle +k)})=(\iota_{0,\mu_1,0}\otimes \iota_{0,0,\mu_2})\Delta_{0,0}(S_i^{(k)}),\\
\Delta(E_i^{(-\langle \mu_1,\alpha_i\rangle +k)})=(\iota_{0,\mu_1,0}\otimes \iota_{0,0,\mu_2})\Delta_{0,0}(E_i^{(k)}),\\
\Delta(F_i^{(-\langle \mu_2,\alpha_i\rangle +k)})=(\iota_{0,\mu_1,0}\otimes \iota_{0,0,\mu_2})\Delta_{0,0}(F_i^{(k)}).
\end{align*}
\end{Rem}

\begin{Thm}
\label{thm: coproduct existence}
$\Delta\colon Y_\mu \longrightarrow Y_{\mu_1} \otimes Y_{\mu_2}$ is a well-defined map.
\end{Thm}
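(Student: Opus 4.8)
The plan is to use the new presentation of Theorem~\ref{newpresentation}, which identifies $Y_\mu$ with the algebra $Y_{\mu_1,\mu_2}$ generated by $S_i^{(-\langle\mu,\alpha_i\rangle+1)}, S_i^{(-\langle\mu,\alpha_i\rangle+2)}$ together with the $E_i^{(r)}, F_i^{(r)}$ in the stated ranges, subject to \eqref{SS}--\eqref{FSE}. Since $\Delta$ is prescribed on precisely these generators, proving well-definedness amounts to checking that the prescribed images in $Y_{\mu_1}\otimes Y_{\mu_2}$ satisfy each relation \eqref{SS}--\eqref{FSE}. This is a finite verification; the real work is to organize it so that the bulk is handled uniformly and only a handful of boundary cases require explicit computation.

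The organizing device is the auxiliary map $\Phi := (\iota_{0,\mu_1,0}\otimes\iota_{0,0,\mu_2})\circ\Delta_{0,0}\colon Y\to Y_{\mu_1}\otimes Y_{\mu_2}$, a composite of the ordinary coproduct (\reft{thm: coproduct for ordinary yangian}) with the shift homomorphisms of \refp{shiftmaps}, hence itself an algebra homomorphism. By Remark~\ref{rem: coproduct existence}, $\Phi$ agrees with $\Delta$ on the distinguished generators $S_i^{(-\langle\mu,\alpha_i\rangle+k)}, E_i^{(-\langle\mu_1,\alpha_i\rangle+k)}, F_i^{(-\langle\mu_2,\alpha_i\rangle+k)}$ for $k=1,2$. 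Consequently any relation among \eqref{SS}--\eqref{FSE} phrased purely in these top-degree generators is the image under the homomorphism $\Phi$ of the corresponding relation in the $\mu_1=\mu_2=0$ presentation of $Y$, and so holds automatically. This settles \eqref{SS}, the Serre relations \eqref{Eserre}--\eqref{Fserre}, the top instances of \eqref{S1E}--\eqref{FF}, and \eqref{FSE} (which in $Y$ reduces to $[S_i^{(2)},H_i^{(3)}]=0$).

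It remains to treat relations involving the lower generators $E_i^{(r)}$ ($1\le r\le-\langle\mu_1,\alpha_i\rangle$) and $F_i^{(r)}$ ($1\le r\le-\langle\mu_2,\alpha_i\rangle$), which $\Delta$ sends to $E_i^{(r)}\otimes 1$ and $1\otimes F_i^{(r)}$. When a relation couples two such generators, or a lower generator to a degree-one generator, one tensor leg is trivial and the relation collapses to one already holding in a single factor $Y_{\mu_1}$ or $Y_{\mu_2}$ — via the surjections $Y^\ge\twoheadrightarrow Y_{\mu_k}^\ge$ of \refr{maps from Yangian 2} and the normalization $H_i^{(-\langle\mu_k,\alpha_i\rangle)}=1$; for instance the interior cases of \eqref{EF} read $0=0$, since the legs commute and the degree sum stays below $-\langle\mu,\alpha_i\rangle+1$. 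I would check these directly, observing that the correction term $-\sum_{\gamma>0}\langle\alpha_i,\gamma\rangle F_\gamma^{(1)}\otimes E_\gamma^{(1)}$ in $\Delta(S_i^{(-\langle\mu,\alpha_i\rangle+2)})$ drops out in the interior because $[F_\gamma^{(1)},E_j^{(r)}]=0$ below the boundary, by induction on the height of $\gamma$ (Jacobi plus the vanishing of the relevant $H_i^{(p)}$).

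The main obstacle, as I see it, is the collection of mixed boundary instances that couple a lower generator to one of the degree-two generators $S_i^{(-\langle\mu,\alpha_i\rangle+2)}, E_i^{(-\langle\mu_1,\alpha_i\rangle+2)}, F_i^{(-\langle\mu_2,\alpha_i\rangle+2)}$, whose coproducts carry root-vector correction terms. Here the correction terms must reproduce exactly the cross-terms forced by the right-hand sides of \eqref{S2E}, \eqref{S2F}, \eqref{EE}, \eqref{FF} and \eqref{EF}: at the boundary degree the bracket $[F_\gamma^{(1)},E_j^{(\cdots)}]$ with $\gamma=\alpha_j$ contributes $H_j^{(-\langle\mu_1,\alpha_j\rangle)}=1$, and the surviving sum over $\gamma$ must telescope — after re-indexing $\gamma\mapsto\gamma+\alpha_i$ and using $[E_i^{(1)},E_\gamma^{(1)}]\propto E_{\gamma+\alpha_i}^{(1)}$ — to precisely the single term prescribed by the coproduct formula, with the Serre relations killing the out-of-range contributions. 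This root-vector bookkeeping, together with the degree-two matchings underlying Remark~\ref{rem: coproduct existence}, is exactly where an earlier version of the statement was in error (corrected following H.~Nakajima's observation), so I would carry out these boundary computations with the greatest care.
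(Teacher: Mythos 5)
Your plan coincides with the paper's proof: reduce via Theorem~\ref{newpresentation} to checking the relations \eqref{SS}--\eqref{FSE}, dispose of the top-degree instances by applying the homomorphism $(\iota_{0,\mu_1,0}\otimes\iota_{0,0,\mu_2})\circ\Delta_{0,0}$ to the corresponding relations in $Y$, and handle the interior and boundary instances by direct computation, using induction on the height of $\gamma$ to control the correction terms $\sum_{\gamma>0}F_\gamma^{(1)}\otimes[E_i^{(1)},E_\gamma^{(1)}]$. The one imprecision is your claim that the Serre relations \eqref{Eserre}--\eqref{Fserre} are settled by the top-degree argument alone --- $E_i^{(1)}$ is a top generator only when $\langle\mu_1,\alpha_i\rangle=0$ --- but they are covered by your other cases (one-sided tensor legs together with the commutation of $E_i^{(1)}\otimes 1$ and $1\otimes E_i^{(1)}$), exactly as in the paper.
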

\begin{proof}
We have to check that $\Delta$ preserves the defining relations.  By Theorem \ref{newpresentation} it suffices to check the relations \eqref{SS} -- \eqref{FSE}.

First, we check relation \eqref{SS}. For $1\leq k,l\leq 2$,
\begin{align*}
&[\Delta(S_i^{(-\langle \mu,\alpha_i \rangle + k)}),\Delta(S_j^{(-\langle \mu,\alpha_j \rangle +l)})]=\\
&= [(\iota_{0,\mu_1,0}\otimes \iota_{0,0,\mu_2})\Delta_{0,0}(S_i^{(k)}),(\iota_{0,\mu_1,0}\otimes \iota_{0,0,\mu_2})\Delta_{0,0}(S_j^{(l)})]=0.
\end{align*}
\vskip1ex
We check relation \eqref{S1E}. For $1\leq r\leq -\langle \mu_1 ,\alpha_j \rangle$,
\begin{align*}
[\Delta(S_i^{(-\langle \mu,\alpha_i\rangle +1)}), \Delta(E_j^{(r)})]&= [S_i^{(-\langle \mu_1,\alpha_i\rangle +1)},E_j^{(r)}]\otimes 1 = (\alpha_i\cdot \alpha_j)\Delta(E_j^{(r)}).
\end{align*}
For $r=-\langle \mu_1, \alpha_j\rangle + 1$,
\begin{align*}
[\Delta(S_i^{(-\langle \mu,\alpha_i \rangle + 1)}),\Delta(E_j^{(-\langle \mu_1,\alpha_j\rangle +1)}]
&= (\iota_{0,\mu_1,0}\otimes \iota_{0,0,\mu_2})\Delta_{0,0}([S_i^{(1)},E_j^{(1)}]) \\
&=  (\alpha_i\cdot \alpha_j)(\iota_{0,\mu_1,0}\otimes \iota_{0,0,\mu_2})\Delta_{0,0}(E_j^{(1)})\\
&=  (\alpha_i\cdot \alpha_j ) \Delta(E_j^{(-\langle \mu_1,\alpha_j\rangle +1)}).
\end{align*}
\vskip1ex
The proof for relation \eqref{S1F} is similar to that of relation \eqref{S1E}.
\vskip1ex
We check relation \eqref{S2E}. For $1\leq r\leq -\langle \mu_1 ,\alpha_j \rangle$,
\begin{align*}
[\Delta(S_i^{(-\langle \mu,\alpha_i\rangle +2)}), \Delta(E_j^{(r)})]
&=[S_i^{(-\langle \mu_1,\alpha_i\rangle +2)},E_j^{(r)}]\otimes 1+\sum \limits_{\gamma >0}\langle \alpha_i ,\gamma \rangle [E_j^{(r)},F_\gamma^{(1)}]\otimes E_\gamma^{(1)}\\
&=(\alpha_i\cdot \alpha_j)E_j^{(r+1)}\otimes 1 +\sum \limits_{\gamma >0}\langle \alpha_i ,\gamma \rangle [E_j^{(r)},F_\gamma^{(1)}]\otimes E_\gamma^{(1)}.
\end{align*}
Note that if $r<-\langle \mu_1,\alpha_i \rangle$, then $[E_j^{(r)},F_l^{(1)}]=0$ for all $l$. Then, by induction,  $[E_j^{(r)},F_\gamma^{(1)}]=0$ for all $\gamma>0$. The result follows in this case. If $r=-\langle \mu_1,\alpha_i \rangle$, then  $[E_j^{(r)},F_i^{(1)}]=\delta_{ij} 1$. Then, by induction,  $[E_j^{(r)},F_\gamma^{(1)}]=0$ for all $\gamma$ of height greater than or equal to 2. The second summand becomes $(\alpha_i\cdot \alpha_j)1\otimes E_j^{(1)}$. Hence, the result follows.

For $r=-\langle \mu_1, \alpha_j\rangle + 1$,
\begin{align*}
[\Delta(S_i^{(-\langle \mu,\alpha_i \rangle + 2)}),\Delta(E_j^{(-\langle \mu_1,\alpha_j\rangle +1)})]
&= (\iota_{0,\mu_1,0}\otimes \iota_{0,0,\mu_2})\Delta_{0,0}([S_i^{(2)},E_j^{(1)}]) \\
&=  (\alpha_i\cdot \alpha_j)(\iota_{0,\mu_1,0}\otimes \iota_{0,0,\mu_2})\Delta_{0,0}(E_j^{(2)})\\
&=  (\alpha_i\cdot \alpha_j ) \Delta(E_j^{(-\langle \mu_1,\alpha_j\rangle +2)}).
\end{align*}
\vskip1ex
Similarly, $\Delta$ preserves relation \eqref{S2F}.
\vskip1ex
Next, we check relation \eqref{EF}. If $1\leq r \leq -\langle \mu_1 ,\alpha_i \rangle $ and $1\leq s \leq -\langle \mu_2 ,\alpha_j \rangle $, then
\begin{align*}
[\Delta(E_i^{(r)}),\Delta(F_j^{(s)})] =[E_i^{(r)}\otimes 1,1\otimes F_j^{(s)}]=0.
\end{align*}

For $r=-\langle \mu_1,\alpha_i \rangle +1$ and $1\leq s \leq -\langle \mu_2 ,\alpha_j \rangle $,
\begin{align*}
[\Delta(E_i^{(-\langle \mu_1,\alpha_i \rangle +1)}),\Delta(F_j^{(s)})]
&=1\otimes [E_i^{(1)},F_j^{(s)}]=\delta_{ij}1\otimes H_i^{(s)}.
\end{align*}
The result follows for this case.

The case where $r\leq -\langle \mu_1,\alpha_i \rangle $ and $s =-\langle \mu_2 ,\alpha_j \rangle +1 $ is similar.

Consider the case where $r=-\langle \mu_1,\alpha_i \rangle +2$ and $1\leq s \leq -\langle \mu_2 ,\alpha_j \rangle $,
\begin{align*}
&[\Delta(E_i^{(-\langle \mu_1,\alpha_i \rangle +2)}),\Delta(F_j^{(s)})]=\\
&= 1\otimes [E_i^{(2)},F_j^{(s)}]+S_i^{(-\langle \mu_1,\alpha_i\rangle+1)}\otimes [E_i^{(1)},F_j^{(s)}]- \sum_{\gamma >0}F_\gamma^{(1)}\otimes [[E_i^{(1)},E_\gamma^{(1)}],F_j^{(s)}]\\
&=\delta_{ij} 1\otimes H_i^{(s+1)}+\delta_{ij} S_i^{(-\langle \mu_1,\alpha_i\rangle+1)}\otimes H_i^{(s)}- \sum_{\gamma >0}F_\gamma^{(1)}\otimes [[E_i^{(1)},E_\gamma^{(1)}],F_j^{(s)}].
\end{align*}

Note that
\begin{align*}
[[E_i^{(1)},E_\gamma^{(1)}],F_j^{(s)}]= [E_i^{(1)},[E_\gamma^{(1)},F_j^{(s)}]] + [E_\gamma^{(1)},[F_j^{(s)},E_i^{(1)}]].
\end{align*}
 Since $s\leq -\langle \mu_2 ,\alpha_j\rangle $, by induction, $[E_\gamma^{(1)}, F_j^{(s)}]\in \mathbb{C}  1$. Hence, $[E_i^{(1)},[E_\gamma^{(1)}, F_j^{(s)}]]=0$.
Again, since $s\leq -\langle \mu_2,\alpha_j\rangle$, $[F_j^{(s)},E_i^{(1)}]=\delta_{ij} H_j^{(s)}\in \mathbb{C}  1$. So, $[E_\gamma^{(1)},[F_j^{(s)},E_i^{(1)}]]=0$. Hence, the last sum is 0. Moreover, it is straightforward to check that the first two summands are consistent with the relation.

The case where $1\leq r \leq -\langle \mu_1 ,\alpha_i \rangle$ and $s=-\langle \mu_2,\alpha_j \rangle +2$ is similar.

Next, for $1\leq k,l\leq 2$ not both equal to 2, we have that
\begin{align*}
[\Delta(E_i^{(-\langle \mu_1,\alpha_i \rangle + k)}),\Delta(F_j^{(-\langle \mu_2,\alpha_j\rangle +l)}]
&= (\iota_{0,\mu_1,0}\otimes \iota_{0,0,\mu_2})\Delta_{0,0}([E_i^{(k)},F_j^{(l)}])\\
&= \delta_{ij}(\iota_{0,\mu_1,0}\otimes \iota_{0,0,\mu_2})\Delta_{0,0}(H_i^{(k+l-1)})\\
&=  \delta_{ij}\Delta(H_i^{(-\langle \mu,\alpha_i\rangle +k+l-1)}).
\end{align*}

\vskip1ex
Next, we check relation \eqref{EE}.

First, consider the case $1\leq r < -\langle \mu_1 ,\alpha_i \rangle$ and $1\leq s < -\langle \mu_1,\alpha_j \rangle$. Then, we have
\begin{align*}
&[\Delta(E_i^{(r+1)}),\Delta(E_j^{(s)})]=[E_i^{(r+1)},E_j^{(s)}]\otimes 1=\\
&= \Big([E_i^{(r)},E_j^{(s+1)}]+\frac{ \alpha_i\cdot\alpha_j  }{2}(E_i^{(r)}E_j^{(s)} +E_j^{(s)}E_i^{(r)} )\Big) \otimes 1\\
&= [E_i^{(r)}\otimes 1,E_j^{(s+1)}\otimes 1]+\frac{ \alpha_i\cdot\alpha_j  }{2}\big((E_i^{(r)}\otimes 1)(E_j^{(s)}\otimes 1) +(E_j^{(s)}\otimes 1)(E_i^{(r)}\otimes 1) \big).
\end{align*}
Consider the case $1\leq r < -\langle \mu_1 ,\alpha_i \rangle$ and $s=-\langle \mu_1 ,\alpha_j \rangle$.
\begin{align*}
&[\Delta(E_i^{(r+1)}),\Delta(E_j^{(-\langle \mu_1 ,\alpha_j \rangle)})]- [\Delta(E_i^{(r)}),\Delta(E_j^{(-\langle \mu_1 ,\alpha_j \rangle+1)})]\\
&= ([E_i^{(r+1)}, E_j^{(-\langle \mu_1 ,\alpha_j \rangle)}]-  [ E_i^{(r)}, E_j^{(-\langle \mu_1 ,\alpha_j \rangle+1)}])\otimes 1\\
&= \frac{ \alpha_i\cdot\alpha_j  }{2}\big((E_i^{(r)}\otimes 1)(E_j^{(-\langle \mu_1 ,\alpha_j \rangle)}\otimes 1)+ (E_j^{(-\langle \mu_1 ,\alpha_j \rangle)}\otimes 1)(E_j^{(r)}\otimes 1)\big).
\end{align*}
The case where $r=-\langle \mu_1,\alpha_i \rangle$ and $1\leq s <-\langle \mu_1,\alpha_j \rangle$ is similar

Next, consider the case $1\leq r < -\langle \mu_1 ,\alpha_i \rangle$ and $s=-\langle \mu_1 ,\alpha_j \rangle+1$.
\begin{align*}
&[\Delta(E_i^{(r+1)}),\Delta(E_j^{(-\langle \mu_1 ,\alpha_j \rangle)+1})]- [\Delta(E_i^{(r)}),\Delta(E_j^{(-\langle \mu_1 ,\alpha_j \rangle+2)})]\\
&= [E_i^{(r+1)},E_j^{(-\langle \mu_1,\alpha_j \rangle +1)} ]\otimes 1 - [E_i^{(r)},E_j^{(-\langle \mu_1,\alpha_j \rangle +2)}]\otimes 1- [E_i^{(r)}, S_j^{(-\langle \mu_1,\alpha_j \rangle +1)}]\otimes E_j^{(1)} \\
&+ \sum_{\gamma >0} [E_i^{(r)},F_\gamma^{(1)}]\otimes [E_j^{(1)},E_\gamma^{(1)}] \\
&= \frac{ \alpha_i\cdot\alpha_j  }{2}(E_i^{(r)}E_j^{(-\langle \mu_1,\alpha_j \rangle +1)}+ E_j^{(-\langle \mu_1,\alpha_j \rangle +1)}E_i^{(r)})\otimes 1 +  (\alpha_i\cdot\alpha_j)  E_i^{(r)}\otimes E_j^{(1)} \\
&+\sum_{\gamma >0} [E_i^{(r)},F_\gamma^{(1)}]\otimes [E_j^{(1)},E_\gamma^{(1)}].
\end{align*}
Since $r<-\langle \mu_1, \alpha_i \rangle$, by induction, $[E_i^{(r)},F_\gamma^{(1)}]=0$ for all $\gamma>0$. The current case follows.
The proof for $r=-\langle \mu_1,\alpha_i \rangle +1$ and $s<-\langle \mu_1,\alpha_j\rangle $ is similar.

Next, let us look at the case $r= -\langle \mu_1,\alpha_i\rangle$ and $s=-\langle \mu_1,\alpha_j\rangle$.
\begin{align*}
&[\Delta(E_i^{(-\langle \mu_1,\alpha_i\rangle +1)}),\Delta(E_j^{(-\langle \mu_1,\alpha_j\rangle)})]- [\Delta(E_i^{(-\langle \mu_1,\alpha_i\rangle )}),\Delta(E_j^{(-\langle \mu_1,\alpha_j\rangle+1)})]\\
&= ([E_i^{(-\langle \mu_1,\alpha_i\rangle)},E_j^{(-\langle \mu_1,\alpha_j\rangle)}] - [E_i^{(-\langle \mu_1,\alpha_i\rangle)}, E_j^{(-\langle \mu_1,\alpha_j\rangle+1)}])\otimes 1\\
&=\frac{\alpha_i\cdot \alpha_j}{2}\big((E_i^{(-\langle \mu_1,\alpha_i\rangle)}\otimes 1)(E_j^{(-\langle \mu_1,\alpha_j\rangle)}\otimes 1) + (E_j^{(-\langle \mu_1,\alpha_j\rangle)}\otimes 1)(E_i^{(-\langle \mu_1,\alpha_i\rangle)}\otimes 1)\big).
\end{align*}

Next, for $r= -\langle \mu_1,\alpha_i \rangle $ and $s= -\langle \mu_1,\alpha_j \rangle +1$.
\begin{align*}
&[\Delta(E_i^{(-\langle \mu_1,\alpha_i \rangle +1)}),\Delta(E_j^{(-\langle \mu_1 ,\alpha_j \rangle)+1})]- [\Delta(E_i^{(-\langle \mu_1,\alpha_i \rangle )}),\Delta(E_j^{(-\langle \mu_1 ,\alpha_j \rangle+2)})]= \\
&= [E_i^{(-\langle \mu_1,\alpha_i \rangle +1)}, E_j^{(-\langle \mu_1,\alpha_j \rangle +1)}]\otimes 1 + 1\otimes [E_i^{(1)},E_j^{(1)}]-[E_i^{(-\langle \mu_1,\alpha_i \rangle )},E_j^{(-\langle \mu_1,\alpha_j \rangle +2)}]\otimes 1\\
&- [E_i^{(-\langle \mu_1,\alpha_i \rangle )},S_j^{(-\langle \mu_1,\alpha_j \rangle +1)}]\otimes E_j^{(1)} + \sum_{\gamma>0} [E_i^{(-\langle \mu_1,\alpha_i \rangle )}, F_\gamma^{(1)}]\otimes [E_j^{(1)},E_\gamma^{(1)}]\\
&= \frac{ \alpha_i\cdot\alpha_j  }{2}\big(E_i^{(-\langle \mu_1,\alpha_i \rangle)}E_j^{-\langle \mu_1,\alpha_j \rangle +1)}\otimes 1 +E_j^{(-\langle \mu_1,\alpha_j \rangle +1)}E_i^{(-\langle \mu_1,\alpha_i \rangle +1)}\otimes 1 \\
&+ 2E_i^{(-\langle \mu_1,\alpha_i \rangle )}\otimes E_j^{(1)}\big)
+ 1\otimes [E_i^{(1)},E_j^{(1)}] + \sum_{\gamma>0} [E_i^{(-\langle \mu_1,\alpha_i \rangle )}, F_\gamma^{(1)}]\otimes [E_j^{(1)},E_\gamma^{(1)}].
\end{align*}
Note that $[E_i^{(-\langle \mu_1,\alpha_i \rangle )}, F_l^{(1)}]\in \mathbb{C} 1$. So, if $\gamma$ is of height greater than or equal to 2, then $[E_i^{(-\langle \mu_1 ,\alpha_i\rangle )},F_\gamma^{(1)}]=0$ by induction. Hence, the only term that survives in the last summand is $1\otimes [E_j^{(1)},E_i^{(1)}]$ and we are done.
The case $r= -\langle \mu_1,\alpha_i \rangle +1$ and $s= -\langle \mu_1,\alpha_j \rangle $ is totally analogous.

Lastly, consider the case $r= -\langle \mu_1,\alpha_i \rangle +1$ and $s= -\langle \mu_1,\alpha_j \rangle +1$.
\begin{align*}
&[\Delta(E_i^{(-\langle \mu_1,\alpha_i \rangle +2)}),\Delta(E_j^{(-\langle \mu_1 ,\alpha_j \rangle)+1})]- [\Delta(E_i^{(-\langle \mu_1,\alpha_i \rangle +1)}),\Delta(E_j^{(-\langle \mu_1 ,\alpha_j \rangle+2)})]= \\
&= (\iota_{0,\mu_1,0}\otimes \iota_{0,0,\mu_2})\Delta_{0,0} \big( [E_i^{(2)},E_j^{(1)}]-[E_i^{(1)}E_j^{(2)}]\big)\\
&=  \frac{\alpha_i\cdot \alpha_j}{2}(\iota_{0,\mu_1,0}\otimes \iota_{0,0,\mu_2})\Delta_{0,0}\big(E_i^{(1)}E_j^{(1)}+E_j^{(1)}E_i^{(1)}\big)\\
&=  \frac{\alpha_i\cdot \alpha_j}{2}\big( \Delta(E_i^{(-\langle \mu_1,\alpha_i \rangle +1)})\Delta(E_j^{(-\langle \mu_1,\alpha_j \rangle +1)})+\Delta(E_j^{(-\langle \mu_1,\alpha_j \rangle +1)})\Delta(E_i^{(-\langle \mu_1,\alpha_i \rangle +1)})\big).
\end{align*}

Relation \eqref{FF} can be checked in the same fashion.
\vskip1ex
We now check relation \eqref{Eserre}. Set $N=1-\alpha_i\cdot \alpha_j$.
First, if $1\leq -\langle \mu_1, \alpha_i\rangle$ and $1\leq -\langle \mu_1,\alpha_j \rangle$, then
\begin{align*}
\operatorname{ad} (\Delta(E_i^{(1)}))^{N}(\Delta(E_j^{(1)}))&= \operatorname{ad}(E_i^{(1)}\otimes 1)^{N}(E_j^{(1)}\otimes 1)= \big((\operatorname{ad} E_i^{(1)})^{N}(E_j^{(1)})\big)\otimes 1= 0.
\end{align*}

For $1\leq -\langle \mu_1, \alpha_i\rangle$ and $1=-\langle \mu_1,\alpha_j \rangle +1$.
\begin{align*}
\operatorname{ad} (\Delta(E_i^{(1)}))^{N}(\Delta(E_j^{(1)}))&= \operatorname{ad}(E_i^{(1)}\otimes 1)^{N}(E_j^{(1)}\otimes 1+ 1\otimes E_j^{(1)})\\
&= \big((\operatorname{ad} E_i^{(1)})^{N}(E_j^{(1)})\big)\otimes 1= 0,
\end{align*}
since $E_i^{(1)}\otimes 1$ commutes with $1\otimes E_j^{(1)}$.

Next, suppose $1=-\langle \mu_1,\alpha_i \rangle +1$.
Since $[E_i^{(1)}\otimes 1,1\otimes E_i^{(1)}]=0$,
\begin{align*}
(\operatorname{ad}(\Delta(E_i^{(1)})))^N&=(\operatorname{ad}(E_i^{(1)}\otimes 1)+\operatorname{ad}(1\otimes E_i^{(1)}))^N\\
&=\sum_{l=0}^{N}\binom{N}{l}\operatorname{ad}(E_i^{(1)}\otimes 1)^i \operatorname{ad} (1\otimes E_i^{(1)})^{N-i}.
\end{align*}
Now, if $1\leq -\langle \mu_1,\alpha_j\rangle$, then
\begin{align*}
(\operatorname{ad}(\Delta(E_i^{(1)})))^N(\Delta(E_j^{(1)})
&= \sum_{l=0}^{N}\binom{N}{l}\operatorname{ad}(E_i^{(1)}\otimes 1)^i \operatorname{ad} (1\otimes E_i^{(1)})^{N-i}(E_j^{(1)}\otimes 1)\\
&= \operatorname{ad}(E_i^{(1)})^N(E_j^{(1)})\otimes 1=0.
\end{align*}
If $1=-\langle \mu_1,\alpha_j\rangle +1$, then
\begin{align*}
\operatorname{ad} (\Delta(E_i^{(1)}))^{N}(\Delta(E_j^{(1)}))
&= \sum_{l=0}^{N}\binom{N}{l}\operatorname{ad}(E_i^{(1)}\otimes 1)^i \operatorname{ad} (1\otimes E_i^{(1)})^{N-i}(E_j^{(1)}\otimes 1+1\otimes E_j^{(1)})\\
&= \operatorname{ad}(E_i^{(1)})^N(E_j^{(1)})\otimes 1 + 1\otimes \operatorname{ad}(E_i^{(1)})^N(E_j^{(1)})  = 0.
\end{align*}
\vskip1ex
The proof for \eqref{Fserre} is similar to that of \eqref{Eserre}.
\vskip1ex
Finally, we check relation \eqref{FSE}.
\begin{align*}
 &\big[\Delta(S_i^{(-\langle \mu,\alpha_i\rangle +2)}),[\Delta(E_i^{(-\langle \mu_1,\alpha_i\rangle +2)}),\Delta(F_i^{(-\langle \mu_2,\alpha_i\rangle +2)})]\big]\\
&= (\iota_{0,\mu_1,0}\otimes \iota_{0,0,\mu_2})\Delta_{0,0}\big([S_i^{(2)},[E_i^{(2)},F_i^{(2)}]\big)=0.
\end{align*}
This proves that $\Delta$ is well-defined.
\end{proof}

By Lemma \ref{4generators}, we have the following:

\lem{4generatorscoproduct}
The coproduct $\Delta\colon Y_\mu \longrightarrow Y_{\mu_1}\otimes Y_{\mu_2}$ is uniquely determined by
\begin{align*}
\Delta(E_i^{(1)})&=E_i^{(1)}\otimes 1 + \delta_{\langle \mu_1,\alpha_i\rangle ,0}1\otimes E_i^{(1)};\\
\Delta(F_i^{(1)})&=\delta_{\langle \mu_2 ,\alpha_i\rangle ,0}F_i^{(1)}\otimes 1 + 1\otimes F_i^{(1)};\\
\Delta(S_i^{(-\langle \mu,\alpha_i\rangle +1 )})&=S_i^{(-\langle \mu_1,\alpha_i\rangle +1 )}\otimes 1 + 1\otimes S_i^{(-\langle \mu_2,\alpha_i\rangle +1 )};\\
\Delta(S_i^{(-\langle \mu,\alpha_i\rangle +2 )})&= S_i^{(-\langle \mu_1,\alpha_i\rangle +2 )}\otimes 1 + 1\otimes S_i^{(-\langle \mu_2,\alpha_i\rangle +2 )} - \sum\limits_{\gamma>0}\langle \alpha_i ,\gamma \rangle F_\gamma^{(1)} \otimes E_\gamma^{(1)}.
\end{align*}
\elem

\begin{Prop} \label{coassociative}
Let $\mu=\mu_1+\mu_2+\mu_3$ where the $\mu_i$'s are antidominant coweights. Then, we have the following commutative diagram
\[
\xymatrix{
Y_{\mu} \ar[rr]^{\Delta_{\mu_1,\mu_2+\mu_3}} \ar[d]_{\Delta_{\mu_1+\mu_2,\mu_3}}&& Y_{\mu_1}\otimes Y_{\mu_2+\mu_3} \ar[d]^{1\otimes \Delta_{\mu_2,\mu_3}}\\
Y_{\mu_1+\mu_2}\otimes Y_{\mu_3} \ar[rr]_{\Delta_{\mu_1,\mu_2}\otimes 1}&& Y_{\mu_1}\otimes Y_{\mu_2}\otimes Y_{\mu_3}
}
\]
\end{Prop}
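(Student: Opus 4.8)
The plan is to verify the square on algebra generators. Both composites $(1\otimes\Delta_{\mu_2,\mu_3})\circ\Delta_{\mu_1,\mu_2+\mu_3}$ and $(\Delta_{\mu_1,\mu_2}\otimes 1)\circ\Delta_{\mu_1+\mu_2,\mu_3}$ are algebra homomorphisms $Y_\mu\to Y_{\mu_1}\otimes Y_{\mu_2}\otimes Y_{\mu_3}$, and $\mu=\mu_1+\mu_2+\mu_3$ is antidominant as a sum of antidominant coweights. By Lemma~\ref{4generators} it therefore suffices to check that they agree on the four families of generators $E_i^{(1)}$, $F_i^{(1)}$, $S_i^{(-\langle\mu,\alpha_i\rangle+1)}$ and $S_i^{(-\langle\mu,\alpha_i\rangle+2)}$, using the explicit formulas of \refl{4generatorscoproduct}.

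For $E_i^{(1)}$, $F_i^{(1)}$ and $S_i^{(-\langle\mu,\alpha_i\rangle+1)}$ this is a short direct computation. For instance, applying the two composites to $E_i^{(1)}$ yields, respectively, $E_i^{(1)}\otimes 1\otimes 1+\delta_{\langle\mu_1,\alpha_i\rangle,0}\,1\otimes E_i^{(1)}\otimes 1+\delta_{\langle\mu_1,\alpha_i\rangle,0}\delta_{\langle\mu_2,\alpha_i\rangle,0}\,1\otimes 1\otimes E_i^{(1)}$ and the same expression with the last coefficient replaced by $\delta_{\langle\mu_1+\mu_2,\alpha_i\rangle,0}$. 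These agree because $\mu_1,\mu_2$ are antidominant: each of $\langle\mu_1,\alpha_i\rangle,\langle\mu_2,\alpha_i\rangle$ is $\le 0$, so their sum vanishes if and only if both vanish, giving $\delta_{\langle\mu_1+\mu_2,\alpha_i\rangle,0}=\delta_{\langle\mu_1,\alpha_i\rangle,0}\delta_{\langle\mu_2,\alpha_i\rangle,0}$. The generator $F_i^{(1)}$ is symmetric, and $S_i^{(-\langle\mu,\alpha_i\rangle+1)}$ is primitive, so both composites send it to the evident sum of three terms.

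The essential case is $S_i^{(-\langle\mu,\alpha_i\rangle+2)}$, whose coproduct carries the tail $-\sum_{\gamma>0}\langle\alpha_i,\gamma\rangle F_\gamma^{(1)}\otimes E_\gamma^{(1)}$. The first step is a lemma computing the coproduct on the PBW root vectors: for any positive root $\gamma$ and antidominant $\nu_1,\nu_2$,
\[
\Delta_{\nu_1,\nu_2}(E_\gamma^{(1)})=E_\gamma^{(1)}\otimes 1+\delta_{\langle\nu_1,\gamma\rangle,0}\,1\otimes E_\gamma^{(1)},\qquad \Delta_{\nu_1,\nu_2}(F_\gamma^{(1)})=\delta_{\langle\nu_2,\gamma\rangle,0}\,F_\gamma^{(1)}\otimes 1+1\otimes F_\gamma^{(1)}.
\]
This follows because, by \refr{egamma}, with the chosen decomposition $\gamma=\alpha_{i_1}+\dots+\alpha_{i_\ell}$ the element $E_\gamma^{(1)}$ is a nested commutator of the simple generators $E_{i_j}^{(1)}$; since each $\Delta_{\nu_1,\nu_2}(E_{i_j}^{(1)})$ is a sum of a first-factor term and a second-factor term, and these commute, every mixed contribution in the expanded nested commutator vanishes, leaving only the purely-first and purely-second terms, with $\prod_j\delta_{\langle\nu_1,\alpha_{i_j}\rangle,0}=\delta_{\langle\nu_1,\gamma\rangle,0}$ again by antidominance. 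With these formulas I would then expand both composites applied to $S_i^{(-\langle\mu,\alpha_i\rangle+2)}$ and compare term by term; the delicate comparison is the coefficient of $F_\gamma^{(1)}\otimes 1\otimes E_\gamma^{(1)}$, which comes out as $-\langle\alpha_i,\gamma\rangle\,\delta_{\langle\mu_2,\gamma\rangle,0}$ on both routes (from $\Delta_{\mu_2,\mu_3}(E_\gamma^{(1)})$ on one side and from $\Delta_{\mu_1,\mu_2}(F_\gamma^{(1)})$ on the other), while all remaining terms match identically.

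The main obstacle is precisely this last computation: one must control the coproduct on the \emph{non-simple} root vectors $E_\gamma^{(1)},F_\gamma^{(1)}$ occurring in the tail and ensure that the $\delta$-factors generated along the two routes coincide. The commutator argument reduces the root-vector coproduct to the clean closed form above, after which antidominance of the $\mu_k$ guarantees the matching of all $\delta$-coefficients; this is the point at which the hypothesis that the $\mu_k$ are antidominant is indispensable.
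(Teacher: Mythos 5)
Your proposal is correct and follows essentially the same route as the paper: reduce to the generators $E_i^{(1)}, F_i^{(1)}, S_i^{(-\langle\mu,\alpha_i\rangle+k)}$ via Lemma \ref{4generatorscoproduct}, use antidominance to get $\delta_{\langle\mu_1+\mu_2,\alpha_i\rangle,0}=\delta_{\langle\mu_1,\alpha_i\rangle,0}\delta_{\langle\mu_2,\alpha_i\rangle,0}$, and handle the tail of $\Delta(S_i^{(-\langle\mu,\alpha_i\rangle+2)})$ by an inductive computation of $\Delta$ on the nested-commutator root vectors $E_\gamma^{(1)},F_\gamma^{(1)}$ (the paper writes your $\delta_{\langle\nu_1,\gamma\rangle,0}$ as $C_\gamma=\prod_i\delta_{\langle\mu_2,\alpha_i\rangle,0}^{n_i}$, which agrees by the same antidominance argument). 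No gaps.
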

\begin{proof}
By \refl{4generatorscoproduct}, it is enough to check for $S_i^{(-\langle \mu,\alpha_i\rangle +k)}$ ($k=1,2$), $E_i^{(1)}$ and $F_i^{(1)}$.
\begin{align*}
(1\otimes \Delta_{\mu_2,\mu_3})\Delta_{\mu_1,\mu_2+\mu_3}(E_i^{(1)})
&= E_i^{(1)}\otimes 1 \otimes 1 + \delta_{\langle \mu_1,\alpha_i\rangle ,0}1\otimes E_i^{(1)}\otimes 1 \\
&+ \delta_{\langle \mu_1,\alpha_i\rangle,0}\delta_{\langle \mu_2,\alpha_i\rangle ,0}1\otimes 1\otimes E_i^{(1)},\\
(\Delta_{\mu_1,\mu_2}\otimes 1)\Delta_{\mu_1+\mu_2,\mu_3}(E_i^{(1)})
&= E_i^{(1)}\otimes 1\otimes 1 + \delta_{\langle \mu_1, \alpha_i\rangle ,0}1\otimes E_i^{(1)}\otimes 1 \\
&+ \delta_{\langle \mu_1+\mu_2,\alpha_i\rangle}1\otimes 1 \otimes E_i^{(1)}.
\end{align*}
The result follows for $E_i^{(1)}$ since $\delta_{\langle \mu_1+\mu_2,\alpha_i\rangle ,0}=\delta_{\langle \mu_1,\alpha_i\rangle,0}\delta_{\langle \mu_2,\alpha_i\rangle,0}$. The computation for $F_i^{(1)}$ is totally analogous.
The computation $S_i^{(-\langle \mu,\alpha_i\rangle+1)}$ is straightforward.

Finally, we have that
\begin{align*}
&(1\otimes \Delta_{\mu_2,\mu_3})\Delta_{\mu_1,\mu_2+\mu_3}(S_i^{(-\langle \mu, \alpha_i\rangle+2)})= \\
&=1\otimes S_i^{(-\langle \mu_2, \alpha_i\rangle +2)}\otimes 1 + 1\otimes 1\otimes S_i^{(-\langle \mu_3, \alpha_i\rangle +2)} - \sum_{\beta>0}\langle \alpha_i,\beta\rangle 1\otimes F_\beta^{(1)}\otimes E_\beta^{(1)})\\
&+S_i^{(-\langle \mu_1,\alpha_i\rangle +2)}\otimes 1 \otimes 1 - \sum_{\gamma>0} \langle \alpha_i,\gamma \rangle F_\gamma^{(1)}\otimes \Delta_{\mu_2,\mu_3}(E_\gamma^{(1)}),\\
&(\Delta_{\mu_1,\mu_2}\otimes 1)\Delta_{\mu_1+\mu_2,\mu_3}(S_i^{(-\langle \mu,\alpha_i\rangle +2)})= \\
&= S_i^{(-\langle \mu_1,\alpha_i\rangle +2)}\otimes 1 \otimes 1 + 1\otimes S_i^{(-\langle \mu_2,\alpha_i\rangle +2)}\otimes 1  -\sum_{\beta>0}\langle \alpha_i,\beta \rangle F_\beta^{(1)}\otimes E_\beta^{(1)}\otimes 1 \\
&+ 1\otimes 1 \otimes S_i^{(-\langle \mu_3,\alpha_i\rangle +2)} - \sum_{\gamma>0} \langle \alpha_i,\gamma \rangle \Delta_{\mu_1,\mu_2}(F_\gamma^{(1)})\otimes E_\gamma^{(1)}.
\end{align*}

For a positive root $\gamma= \sum_in_i\alpha_i$, by a simply induction, we can show that $\Delta_{\mu_2,\mu_3}(E_\gamma^{(1)})= E_\gamma^{(1)}\otimes 1 + C_\gamma 1\otimes E_\gamma^{(1)}$ and that $\Delta_{\mu_1,\mu_2}(F_\gamma^{(1)})= 1\otimes F_\gamma^{(1)} + C_\gamma F_\gamma^{(1)}\otimes 1$ where $C_\gamma=\prod_i\delta_{\langle \mu_2,\alpha_i\rangle ,0}^{n_i}$. The result follows.
\end{proof}

\subsection{The coproduct in the general case}
\th{generalcoproduct}
Let $\mu=\mu_1+\mu_2$ where $\mu,\mu_1,\mu_2$ are arbitrary coweights. There exists a coproduct $\Delta_{\mu_1,\mu_2}\colon Y_\mu \longrightarrow Y_{\mu_1}\otimes Y_{\mu_2}$ such that, for all antidominant coweights $\eta_1,\eta_2$, the following diagram is commutative
\[
\xymatrix{
Y_\mu  \ar[d]_{\iota_{\mu,\eta_1,\eta_2}} \ar[rrr]^{\Delta_{\mu_1,\mu_2}} &&& Y_{\mu_1}\otimes Y_{\mu_2} \ar[d]^{(\iota_{\mu_1,\eta_1,0})\otimes (\iota_{\mu_2,0,\eta_2})}\\
Y_{\mu+ \eta_1+\eta_2} \ar[rrr]_{\Delta_{\mu_1+\eta_1,\mu_2+\eta_2}} &&& Y_{\mu_1+\eta_1}\otimes Y_{\mu_2+\eta_2}
}
\]
\eth
\begin{proof}
First, we need to define the map $\Delta_{\mu_1,\mu_2}$. Let $\eta_1, \eta_2$ be antidominant coweights such that $\mu_1 +\eta_1$ and $\mu_2+\eta_2$ are also antidominant. We see that $\mu+\eta_1+\eta_2$ is also antidominant.

Consider the diagram
\[
\xymatrix{
Y_\mu  \ar[d]_{\iota_{\mu,\eta_1,\eta_2}} &&& Y_{\mu_1}\otimes Y_{\mu_2} \ar[d]^{(\iota_{\mu_1,\eta_1,0})\otimes (\iota_{\mu_2,0,\eta_2})}\\
Y_{\mu+ \eta_1+\eta_2} \ar[rrr]_{\Delta=\Delta_{\mu_1+\eta_1,\mu_2+\eta_2}} &&& Y_{\mu_1+\eta_1}\otimes Y_{\mu_2+\eta_2}
}
\]

In order to define $\Delta_{\mu_1,\mu_2}$, we need to show that
\[
\Delta(\iota_{\mu,\eta_1,\eta_2}(Y_{\mu}))\subseteq (\iota_{\mu_1,\eta_1,0}\otimes \iota_{\mu_2,0,\eta_2})(Y_{\mu_1}\otimes Y_{\mu_2}).
\]

Note that $Y_{\mu_1+\eta_1}^\leq \otimes Y_{\mu_2+\eta_2}^\geq \subseteq \iota_{\mu_1,\eta_1,0}\otimes \iota_{\mu_2,0,\eta_2}(Y_{\mu_1}\otimes Y_{\mu_2})$.
\vskip2ex
First, for $r\geq 1$, we claim that
\begin{align*}
\Delta(E_i^{(r)})\in E_i^{(r)}\otimes 1 +Y_{\mu_1+\eta_1}^\leq \otimes Y_{\mu_2+\eta_2}^>, \\
\Delta(F_i^{(r)})\in1\otimes F_i^{(r)} + Y_{\mu_1+\eta_1}^<\otimes Y_{\mu_2+\eta_2}^\geq.
\end{align*}

We prove the claim for $E$, the proof for $F$ is similar. We proceed by induction.

If $1\leq -\langle \mu_1+\eta_1,\alpha_i\rangle$, then it is clear since $\Delta(E_i^{(1)})=E_i^{(1)}\otimes 1$.

If $0=\langle \mu_1+\eta_1,\alpha_i\rangle$,  then it is also clear since $\Delta(E_i^{(1)})= E_i^{(1)}\otimes 1 +1\otimes E_i^{(1)}$ and since $1\otimes E_i^{(1)}\in Y_{\mu_1+\eta_1}^\leq\otimes Y_{\mu_2+\eta_2}^>$.

The induction step follows from the fact that $\Delta$ is a homomorphism and the fact that $[S_i^{(-\langle \mu+\eta_1+\eta_2,\alpha_i\rangle+2)},E_i^{(r)}]=2E_i^{(r+1)}$. This proves the claim.
\vskip2ex

Note that $\iota_{\mu,\eta_1,\eta_2}(Y_\mu)$ is generated by $E_i^{(r)} (r>-\langle \eta_1,\alpha_i\rangle )$, $F_i^{(s)} (s>-\langle \eta_2,\alpha_i\rangle )$ and $H_i^{(t)} (t>-\langle \mu+\eta_1+\eta_2,\alpha_i\rangle)$.

Applying the claim for $r>-\langle \eta_1,\alpha_i\rangle$, we get $\Delta(E_i^{(r)}) \in (\iota_{\mu_1,\eta_1,0}\otimes \iota_{\mu_2,0,\eta_2})(Y_{\mu_1}\otimes Y_{\mu_2})$ since $E_i^{(r)}\otimes 1 = (\iota_{\mu_1,\eta_1,0}\otimes \iota_{\mu_2,0,\eta_2})(E_i^{(r+\langle \eta_1,\alpha_i\rangle)}\otimes 1)$.

Similarly, we obtain $\Delta(F_i^{(r)}) \in (\iota_{\mu_1,\eta_1,0}\otimes \iota_{\mu_2,0,\eta_2})(Y_{\mu_1}\otimes Y_{\mu_2})$
for $s>-\langle \eta_2,\alpha_i\rangle$.

Finally, for $t>-\langle \mu+\eta_1+\eta_2,\alpha_i\rangle$,
\begin{align*}
\Delta(H_i^{(t)})&=[\Delta(E_i^{(t)}),\Delta(F_i^{(1)})]\\
&\in [E_i^{(t)}\otimes 1, Y_{\mu_1+\eta_1}^<\otimes Y_{\mu_2+\eta_2}^\geq] +[Y_{\mu_1+\eta_1}^{\leq}\otimes Y_{\mu_2+\eta_2}^>,1\otimes F_i^{(1)}] \\
&\subseteq Y_{\mu_1+\eta_1}^\leq \otimes Y_{\mu_2+\eta_2}^\geq.
\end{align*}
Therefore, we have a coproduct $\Delta_{\mu_1,\mu_2}:Y_\mu \longrightarrow Y_{\mu_1}\otimes Y_{\mu_2}$.

Next, we show that $\Delta_{\mu_1\mu_2}$ is independent of the choice of $\eta_1,\eta_2$, i.e., for all $\eta_1,\eta_2$ such that $\mu_1+\eta_1,\mu_2+\eta_2$ are antidominant, the diagram in the statement of the theorem is commutative.  To see this, let $\eta_1',\eta_2'$ be another such pair of coweights. Consider the diagram
\[
\xymatrix{
Y_\mu  \ar[d]_{\iota_{\mu,\eta_1,\eta_2}}  &&& Y_{\mu_1}\otimes Y_{\mu_2} \ar[d]^{(\iota_{\mu_1,\eta_1,0})\otimes (\iota_{\mu_2,0,\eta_2})}\\
Y_{\mu+ \eta} \ar[d]_{\iota_{\mu+\eta,\eta_1',\eta_2'}} \ar[rrr]_{\Delta_{\mu_1+\eta_1,\mu_2+\eta_2}} &&& Y_{\mu_1+\eta_1}\otimes Y_{\mu_2+\eta_2} \ar[d]^{(\iota_{\mu_1+\eta_1,\eta_1',0})\otimes (\iota_{\mu_2+\eta_2,0,\eta_2'})}
\\
Y_{\mu+\eta+\eta'} \ar[rrr]_{\Delta_{\mu_1+\eta_1+\eta'_1,\mu_2+\eta_2+\eta'_2}}& & & Y_{\mu_1+\eta_1+\eta_1'} \otimes Y_{\mu_2+\eta_2+\eta_2'}
}
\]
We see that $\iota_{\mu+\eta, \eta_1',\eta_2'}\circ\iota_{\mu,\eta_1,\eta_2}=\iota_{\mu, \eta_1+\eta_1',\eta_2+\eta_2'}$ and $\iota_{\mu_1+\eta_1,\eta_1',0}\circ \iota_{\mu_1,\eta_1,0}=\iota_{\mu,\eta_1+\eta_1',0}$ and $\iota_{\mu_2+\eta_2,0,\eta_2'}\circ \iota_{\mu_2,0,\eta_2}=\iota_{\mu_2,0,\eta_2+\eta_2'}$. Moreover, it is not hard to check on generators that the lower square commutes.

Therefore, the choice of $\Delta_{\mu_1,\mu_2}$ is the same for the pairs of coweights $(\eta_1,\eta_2)$ and $(\eta_1+\eta_1',\eta_2+\eta_2')$. By swapping the roles of $\eta$ and $\eta'$ in the above, the choice of $\Delta_{\mu_1,\mu_2}$ is also the same for the pairs $(\eta_1',\eta_2')$ and $(\eta_1+\eta_1',\eta_2+\eta_2')$.
\vskip1ex
Finally, we check that the diagram in the statement of the theorem commutes for any pair of antidominant coweights $\eta_1, \eta_2$. Let $\eta_1',\eta_2'$ be antidominant coweights such that $\mu_k+\eta_k+\eta_k'$ ($k=1,2$) are antidominant. Consider the diagram
\[
\xymatrix{
Y_\mu  \ar[d]_{\iota_{\mu,\eta_1,\eta_2}} \ar[rrr]^{\Delta_{\mu_1,\mu_2}} &&& Y_{\mu_1}\otimes Y_{\mu_2} \ar[d]^{(\iota_{\mu_1,\eta_1,0})\otimes (\iota_{\mu_2,0,\eta_2})}\\
Y_{\mu+ \eta} \ar[d]_{\iota_{\mu+\eta,\eta_1',\eta_2'}} \ar[rrr]_{\Delta_{\mu_1+\eta_1,\mu_2+\eta_2}} &&& Y_{\mu_1+\eta_1}\otimes Y_{\mu_2+\eta_2} \ar[d]^{(\iota_{\mu_1+\eta_1,\eta_1',0})\otimes (\iota_{\mu_2+\eta_2,0,\eta_2'})}
\\
Y_{\mu+\eta+\eta'} \ar[rrr]_{\Delta_{\mu_1+\eta_1+\eta'_1,\mu_2+\eta_2+\eta'_2}}& & & Y_{\mu_1+\eta_1+\eta_1'} \otimes Y_{\mu_2+\eta_2+\eta_2'}
}
\]
Since $\iota_{\mu+\eta, \eta_1',\eta_2'}\circ\iota_{\mu,\eta_1,\eta_2}=\iota_{\mu, \eta_1+\eta_1',\eta_2+\eta_2'}$ and $\iota_{\mu_1+\eta_1,\eta_1',0}\circ \iota_{\mu_1,\eta_1,0}=\iota_{\mu,\eta_1+\eta_1',0}$ and $\iota_{\mu_2+\eta_2,0,\eta_2'}\circ \iota_{\mu_2,0,\eta_2}=\iota_{\mu_2,0,\eta_2+\eta_2'}$, the outer square and the lower square are commutative. Since $\iota_{\mu_1+\eta_1,\eta_1',0}\otimes \iota_{\mu_2+\eta_2,0,\eta_2'}$ is injective, we see that the upper square is also commutative.
\end{proof}

\rem{BK coproduct}
Brundan and Kleshchev define a coproduct for shifted 
$\mathfrak{gl}_n$--Yangians in~\cite[Theorem 11.9]{BK}, which is analogous to 
our coproduct in the $\fsl_n$ case when $\mu = \mu_1 + \mu_2$ are all dominant. 
Namely, form the associated lower (resp.\ upper) triangular shift matrices 
$\sigma'$, $\sigma''$ by extending $s_{i+1, i}' = \mu_{1,i}$ and $s_{i, i+1}'' = \mu_{2, i}$, and take $\sigma = \sigma' + \sigma''$.  Then Brundan and Kleshchev's coproduct $Y_n(\sigma) \rightarrow Y_n(\sigma') \otimes Y_n(\sigma'')$ is defined by embedding into $Y(\fgl_n) \rightarrow Y(\fgl_n)\otimes Y(\fgl_n)$.  However, the standard inclusion of Hopf algebras $Y(\fsl_n)\hookrightarrow Y(\fgl_n)$ is {\em not} compatible with our shift map $\iota_{\mu, -\mu_1, -\mu_2}$.  
\erem

\begin{Prop} \label{coassociative2}
Suppose that $\mu=\mu_1+\mu_2+\mu_3$ where $\mu_2$ is antidominant. Then, the following diagram is commutative:
\[
\xymatrix{
Y_{\mu} \ar[rr]^{\Delta_{\mu_1,\mu_2+\mu_3}} \ar[d]_{\Delta_{\mu_1+\mu_2,\mu_3}}&& Y_{\mu_1}\otimes Y_{\mu_2+\mu_3} \ar[d]^{1\otimes \Delta_{\mu_2,\mu_3}}\\
Y_{\mu_1+\mu_2}\otimes Y_{\mu_3} \ar[rr]_{\Delta_{\mu_1,\mu_2}\otimes 1}&& Y_{\mu_1}\otimes Y_{\mu_2}\otimes Y_{\mu_3}
}
\]
\end{Prop}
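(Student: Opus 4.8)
The plan is to reduce Proposition~\ref{coassociative2} to the fully antidominant case already settled in Proposition~\ref{coassociative}, exploiting the injectivity of the shift maps together with the characterization of the general coproduct supplied by \reft{generalcoproduct}. First I would choose antidominant coweights $\eta_1,\eta_3$ such that $\mu_1+\eta_1$ and $\mu_3+\eta_3$ are antidominant (always possible, as in the proof of \reft{generalcoproduct}). Since $\mu_2$ is assumed antidominant, the three coweights $\mu_1+\eta_1,\ \mu_2,\ \mu_3+\eta_3$ are all antidominant, and hence so is their sum $\mu+\eta_1+\eta_3$. This is exactly where the hypothesis on $\mu_2$ is used: it guarantees that the intermediate coweights $(\mu_1+\mu_2)+\eta_1=(\mu_1+\eta_1)+\mu_2$ and $(\mu_2+\mu_3)+\eta_3=\mu_2+(\mu_3+\eta_3)$ are again antidominant, so that every coproduct appearing after shifting falls under Proposition~\ref{coassociative}.

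Write $\Phi_1=(1\otimes\Delta_{\mu_2,\mu_3})\circ\Delta_{\mu_1,\mu_2+\mu_3}$ and $\Phi_2=(\Delta_{\mu_1,\mu_2}\otimes 1)\circ\Delta_{\mu_1+\mu_2,\mu_3}$ for the two composites $Y_\mu\to Y_{\mu_1}\otimes Y_{\mu_2}\otimes Y_{\mu_3}$, and let $J=\iota_{\mu_1,\eta_1,0}\otimes 1\otimes\iota_{\mu_3,0,\eta_3}$ be the induced map into $Y_{\mu_1+\eta_1}\otimes Y_{\mu_2}\otimes Y_{\mu_3+\eta_3}$. The heart of the argument is to show $J\circ\Phi_1=J\circ\Phi_2$ and then cancel $J$. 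I would compute $J\circ\Phi_1$ by pushing $J$ through the two tensor factors: on the $Y_{\mu_2+\mu_3}$ factor, the identity $(1\otimes\iota_{\mu_3,0,\eta_3})\circ\Delta_{\mu_2,\mu_3}=\Delta_{\mu_2,\mu_3+\eta_3}\circ\iota_{\mu_2+\mu_3,0,\eta_3}$ is precisely the defining square of \reft{generalcoproduct} for $\Delta_{\mu_2,\mu_3}$ (valid since $\mu_2$ and $\mu_3+\eta_3$ are antidominant); then $(\iota_{\mu_1,\eta_1,0}\otimes\iota_{\mu_2+\mu_3,0,\eta_3})\circ\Delta_{\mu_1,\mu_2+\mu_3}=\Delta_{\mu_1+\eta_1,\mu_2+\mu_3+\eta_3}\circ\iota_{\mu,\eta_1,\eta_3}$ is the defining square for $\Delta_{\mu_1,\mu_2+\mu_3}$. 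These combine to give $J\circ\Phi_1=(1\otimes\Delta_{\mu_2,\mu_3+\eta_3})\circ\Delta_{\mu_1+\eta_1,\mu_2+\mu_3+\eta_3}\circ\iota_{\mu,\eta_1,\eta_3}$, i.e. the \emph{top} path of the coassociativity square of Proposition~\ref{coassociative} for the antidominant splitting $(\mu_1+\eta_1)+\mu_2+(\mu_3+\eta_3)$, followed by $\iota_{\mu,\eta_1,\eta_3}$. The symmetric computation, using the defining squares for $\Delta_{\mu_1,\mu_2}$ and $\Delta_{\mu_1+\mu_2,\mu_3}$, yields $J\circ\Phi_2=(\Delta_{\mu_1+\eta_1,\mu_2}\otimes 1)\circ\Delta_{\mu_1+\mu_2+\eta_1,\mu_3+\eta_3}\circ\iota_{\mu,\eta_1,\eta_3}$, the \emph{bottom} path of the same square followed by $\iota_{\mu,\eta_1,\eta_3}$.

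Since $\mu_1+\eta_1,\ \mu_2,\ \mu_3+\eta_3$ are antidominant, Proposition~\ref{coassociative} shows that the top and bottom paths of that square coincide, whence $J\circ\Phi_1=J\circ\Phi_2$. Finally, $\iota_{\mu_1,\eta_1,0}$ and $\iota_{\mu_3,0,\eta_3}$ are injective by \refc{cor: injectivity of shift maps} (as $\eta_1,\eta_3$ are antidominant), so over $\BC$ the map $J$ is injective, being a tensor product of injective linear maps; cancelling $J$ gives $\Phi_1=\Phi_2$, which is the asserted commutativity. The only genuinely delicate point is the bookkeeping: at each invocation of \reft{generalcoproduct} one must verify that the relevant coweights are antidominant, and this is precisely guaranteed by the antidominance of $\mu_2$ together with the choice of $\eta_1,\eta_3$. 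Were $\mu_2$ arbitrary, the middle factor could not be left unshifted and this reduction would break down.
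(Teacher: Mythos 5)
Your argument is correct and is essentially the paper's own proof: the paper assembles the same data into a cube whose back face is the antidominant coassociativity square of Proposition~\ref{coassociative}, whose side faces are the defining squares from \reft{generalcoproduct}, and then cancels the injective shift maps — exactly your computation of $J\circ\Phi_1=J\circ\Phi_2$ written diagrammatically. Your explicit identification of where the antidominance of $\mu_2$ enters matches the paper's (implicit) use of it.
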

\begin{proof}
Let $\eta_1,\eta_3$ be antidominant coweights such that $\mu_1'=\mu_1+\eta_1$ and $\mu_3'=\mu_3+\eta_3$ are antidominant. Consider the diagram
\[
\xymatrix{
& Y_{\mu_1'+\mu_2+\mu_3'} \ar[rr]^\Delta \ar[dd]_\Delta&  & Y_{\mu_1'}\otimes Y_{\mu_2+\mu_3'} \ar[dd]^{1\otimes \Delta}\\
Y_{\mu} \ar[rr]^{\qquad \Delta} \ar[dd]_{\Delta } \ar[ur]^{\iota_{\mu, \eta_1,\eta_3}}&& Y_{\mu_1}\otimes Y_{\mu_2+\mu_3} \ar[dd]^{1\otimes \Delta} \ar[ur]^{\iota_{\mu_1,\eta_1,0}\otimes \iota_{\mu_2+\mu_3,0,\eta_3}}&\\
& Y_{\mu_1'+\mu_2}\otimes Y_{\mu_3'} \ar[rr]^{\Delta\otimes 1 \qquad \qquad \qquad}& & Y_{\mu_1'}\otimes Y_{\mu_2}\otimes Y_{\mu_3'}\\
Y_{\mu_1+\mu_2}\otimes Y_{\mu_3} \ar[rr]_{\Delta\otimes 1} \ar[ur]_{\iota_{\mu_1+\mu_2,\eta_1,0}\otimes \iota_{\mu_3,0,\eta_3}}&& Y_{\mu_1}\otimes Y_{\mu_2}\otimes Y_{\mu_3} \ar[ur]_{\iota_{\mu_1,\eta_1,0}\otimes 1 \otimes \iota_{\mu_3,0,\eta_3}}&
}
\]

We have the commutativity of all faces of this cube except for that of
\[
\xymatrix{
Y_{\mu} \ar[rr]^{\Delta_{\mu_1,\mu_2+\mu_3}} \ar[d]_{\Delta_{\mu_1+\mu_2,\mu_3}}&& Y_{\mu_1}\otimes Y_{\mu_2+\mu_3} \ar[d]^{1\otimes \Delta_{\mu_2,\mu_3}}\\
Y_{\mu_1+\mu_2}\otimes Y_{\mu_3} \ar[rr]_{\Delta_{\mu_1,\mu_2}\otimes 1}&& Y_{\mu_1}\otimes Y_{\mu_2}\otimes Y_{\mu_3}
}
\]
Using the commutativity of the other faces and injectivity of shift maps, we see that the above square also commutes.
\end{proof}

\rem{notassociative}
In general, the coproducts are not coassociative.  More precisely, when $ \mu_2 $ is not antidominant, the diagram from Proposition \ref{coassociative2} does not commute.  This can already be seen for $\fg=\fsl_2,\ \mu_1=\mu_3=0,\ \mu_2=2$.
\erem

\sec{classical}{Classical limit}

In this section, we continue with $ \fg $ as simply-laced semisimple Lie algebra and we let $ G $ be the semisimple complex group of adjoint type whose Lie algebra is $ \fg $.

\ssec{generalities}{Generalities on filtrations}

Let $ A $ be a $\BC$-algebra and let $ F^\bullet A = \dots \subseteq F^{-1} A \subseteq F^0 A \subseteq F^1 A \subseteq \dots$ be a separated and exhaustive filtration, meaning that $ \cap_k F^k A = 0 $ and $\cup_k F^k A = A $.  We assume that this filtration is compatible with the algebra structure in the sense that $ F^k A \cdot F^l A \subset F^{k+l} A $ and $1 \in F^0 A$.

In this case, we define the {\em Rees algebra} of $ A$ to be the graded $ \BC[\hbar]$--algebra $ \Rees^F A := \oplus_k \hbar^k F^k A $, viewed as a subalgebra of $ A[\hbar,\hbar^{-1}]$. We also define the associated graded of $ A $ to be the graded algebra $ \gr^F A := \bigoplus F^k A / F^{k-1} A $.  Note that we have a canonical isomorphism of graded algebras $ \Rees^F A/ \hbar \Rees^F A \cong \gr^F A  $.

We say that the filtered algebra $ A $ is {\em almost commutative} if $ \gr^F A $ is commutative.

Now suppose that our algebra $ A $ is also graded, $ A = \oplus_n A_n $ and define $ F^k A_n := F^k A \cap A_n $.  Assume that for each $ k $, $ F^k A = \oplus_n F^k A_n $.  Define a new filtration $ G $ on $ A $ by setting $ G^k A = \oplus_{n+r = k} F^r A_n $.

\lem{change filtration}
With the above setup, we have canonical algebra isomorphisms $ \Rees^F A  \cong \Rees^G A $ and $ \gr^F A \cong \gr^G A $.
\elem

\prf
We prove the isomorphism for the associated graded (the Rees one is similar).  Define $B_{k,n} = F^k A_n / F^{k-1} A_n $.  Then $ \gr^F A = \oplus_{k,n} B_{k,n} $.  Now $ B_{k,n} = G^{k+n} A_n / G^{k+n-1} A_n $.  Thus we also see that $ \gr^G A = \oplus B_{k,n} $.  This gives us the isomorphism of vector spaces $ \gr^F A \rightarrow \gr^G A $ which is easily seen to be an algebra isomorphism as well.
\epr

\rem{freeness}
Suppose that the filtration $ F^\bullet A $ admits an expansion as a $\BC$-filtered vector space;  this means that we can find a filtered vector space isomorphism $ \gr^F A \rightarrow A $ (this condition is always satisfied if the filtration is bounded below).   If the filtration admits an expansion, then it is easy to see that $ \Rees^F A $ is a free $\BC[\hbar] $-module.

Moreover, suppose we have two filtered algebras $ F^\bullet A $ and $ F^\bullet B $.  We can define a filtration on $ A \otimes B $ by $ F^n (A \otimes B) = +_{k + l = n} F^k A \otimes F^l B $.  If the filtrations of $A$ and $B$ admit expansions, then we have $ \Rees (A \otimes B) \cong \Rees A \otimes \Rees B $ and $\gr (A \otimes B) \cong \gr A \otimes \gr B $.
\erem

\ssec{filtr}{Filtrations on the shifted Yangian}
Let $ \mu $ be any coweight.

Given any pair of coweights $ \nu_1, \nu_2 $ such that $ \nu_1 + \nu_2 = \mu $, we define a filtration $ F_{\nu_1, \nu_2} Y_\mu $ by defining degrees on the PBW variables as follows
\eq{grading on Ymu}
\deg E_\alpha^{(q)} = \langle \nu_1, \alpha \rangle + q, \ \deg F_\beta^{(q)} = \langle \nu_2, \beta \rangle + q, \ \deg H_i^{(p)} = \langle \mu, \alpha_i \rangle + p
\end{equation}
More precisely, we define $F^k_{\nu_1,\nu_2} Y_\mu$ to be the span of all {\em ordered} monomials in the PBW variables whose total degree is at most $k$.  A priori it is not clear that this filtration is independent of the choice of PBW variables, nor that it is independent of
the order used to form the monomials, nor that it is even an algebra filtration. We establish these properties in \refp{commutative} below.

Our goal now is to prove that $ \gr^{F_{\nu_1, \nu_2}}Y_\mu $ is commutative and to construct an isomorphism between this ring and the coordinate ring of a certain infinite type affine variety.

Now, assume that $ \mu $ is antidominant.  Define a filtration $F_\mu$ on $ Y\otimes \BC[H_i^{(0)}: i\in I]$ by taking a tensor product of the filtration $ F_{0,0} Y $ with the filtration on $\BC[H_i^{(0)}: i\in I]$ given by setting
$ \deg H_i^{(0)} =  \langle \mu, \alpha_i \rangle $.

Define a filtration $ F_\mu $ on $ \widetilde{Y} $ by defining degrees on the PBW variables as follows
$$\deg \widetilde{E}_\alpha^{(q)} = \langle \mu, \alpha \rangle + q,\  \deg \widetilde{F}_\beta^{(q)} = q, \ \deg \widetilde{H}_i^{(p)} = \langle \mu, \alpha_i \rangle + p   $$
As above, the filtered piece $F_\mu^k \widetilde{Y}$ is defined to be the span of those ordered monomials in the PBW variables whose total degree is at most $k$.

\lem{strict}
The inclusion $ \widetilde{Y} \hookrightarrow Y\otimes_\BC \BC[H_i^{(0)}: i\in I] $ is compatible with the filtrations $ F_\mu $ on both algebras.  Moreover, this inclusion is strict, i.e. for each $k $, $$ F_\mu^k(\widetilde{Y}) = F_\mu^k(Y\otimes \BC[H_i^{(0)}: i\in I]) \cap \widetilde{Y}.$$ Thus the resulting map $$ \gr^{F_\mu} \widetilde{Y} \rightarrow \gr^{F_\mu}(Y\otimes \BC[H_i^{(0)}: i\in I]) $$ is injective.
\elem
\prf
Both filtrations are defined by the degrees of monomials, and therefore it suffices to verify that the degree of a monomial from $\widetilde{Y}$ is equal to the degree of its image in $Y\otimes \BC[H_i^{(0)}]$.
\epr

\cor{tilde commutative}
The filtration $F_\mu \widetilde{Y}$ is an algebra filtration, and  $ \widetilde{Y}$ is almost commutative.  Moreover, $F_\mu \widetilde{Y}$ is independent of the choice of PBW variables and is also independent of the order used to form the monomials.
\ecor
\prf
Since $\widetilde{Y}\hookrightarrow Y\otimes \BC[H_i^{(0)}]$ is an inclusion of algebras, it follows immediately from \refl{strict} that $F_\mu \widetilde{Y}$ is an algebra filtration.  We know from \cite{kwwy} that $ \gr^{F_{0,0}} Y $ is commutative.  (In fact, it is isomorphic to $ \BC[G_1[[z^{-1}]]]$.)  Thus $ \gr^{F_\mu}(Y\otimes \BC[H_i^{(0)}: i\in I]) $ is commutative, so \refl{strict} implies that $\gr^{F_\mu} \widetilde{Y}$ is commutative.  Finally, independence of choice of PBW monomials also follows for the corresponding property for $F_{0,0} Y$.
\epr

Now, we show that $ Y_\mu $ is almost commutative.
\prop{commutative}
The filtration $F_{\mu,0} Y_\mu$ is an algebra filtration, and  $ Y_\mu$ is almost commutative.  Moreover, $F_{\mu,0} Y_\mu$ is independent of the choice of PBW variables and is also independent of the order used to form the monomials.
\eprop

\prf
First consider the case of $\mu$ antidominant.  We then have a surjective map of algebras $\widetilde{Y}\twoheadrightarrow Y_\mu$, under which $F_{\mu,0} Y_\mu$ is the quotient filtration of $F_\mu \widetilde{Y}$.  All three properties follow from \refc{tilde commutative}.

Next, consider the case of general $\mu$.  Choose $\mu_1$ antidominant such that $\mu+\mu_1 $ is antidominant, and consider the shift homomorphism $\iota_{\mu, \mu_1,0}: Y_\mu \rightarrow Y_{\mu+\mu_1}$. This map is injective by \refc{cor: injectivity of shift maps}, and it is compatible with the filtrations $F_{\mu,0} Y_\mu \rightarrow F_{\mu+\mu_1,0} Y_{\mu+\mu_1}$.  Moreover it is {\em strict}, by the same reasoning as \refl{strict}. We now reason as in the proof of \refc{tilde commutative}, proving the claim.
\epr

Now, let $ \nu_2 $ be any coweight and let $ \nu_1 = \mu - \nu_2 $.  Define a grading on $ Y_\mu $ by setting the graded degree of the generators as follows
\eq{grading by coweight}
\deg E_i^{(q)} = \langle -\nu_2, \alpha_i \rangle,\ \deg F_i^{(q)} = \langle \nu_2, \alpha_i \rangle,\ \deg H_i^{(p)} =0
\end{equation}
This is easily seen to be a grading on $ Y_\mu $.  (This grading is the eigenspaces of the adjoint action of the element $ \sum_i \langle -\nu_2, \omega_i \rangle H_i^{(-\langle \mu, \alpha_i \rangle + 1)}  $ where $\omega_i$ is a fundamental
weight).

The filtration $F_{\nu_1, \nu_2} Y_\mu $ comes from the filtration $ F_{\mu, 0} Y_\mu $ and the above grading using the construction given in \refss{generalities}.  Thus, we get a canonical isomorphism $ \gr^{F_{\nu_1, \nu_2}} Y_\mu \cong \gr^{F_{\mu, 0}} Y_\mu $ by \refl{change filtration} and in particular the former is commutative.  Since all $ \gr^{F_{\nu_1, \nu_2}} Y_\mu $ are canonically isomorphic (as algebras, but not as graded algebras), we will write $ \gr Y_\mu $ to denote any one of them, when we are not concerned with the grading.  Similarly, all Rees algebras $ \Rees^{F_{\nu_1, \nu_2}} Y_\mu $ are all canonically isomorphic and we will write $ \bY_\mu := \Rees Y_\mu $.

\cor{polynomial ring}
The algebra $\gr Y_\mu$ is a polynomial ring in the PBW variables.
\ecor

\ssec{Wmu}{The variety $ \CW_\mu $}
For any algebraic group $H $, we write $ H_1[[z^{-1}]] $ for the kernel of the evaluation map $ H[[z^{-1}]] \rightarrow H $.

We define the (infinite type) scheme
\eq{def of Wmu}
\CW_\mu := U_1[[z^{-1}]] T_1[[z^{-1}]] z^\mu U_{-,1}[[z^{-1}]]  \subset G((z^{-1}))
\end{equation}
We will also need a different description of this scheme.  The inclusion $ U_1[[z^{-1}]] \rightarrow U((z^{-1})) $ gives rise to an isomorphism $ U_1[[z^{-1}]] \cong U((z^{-1})) / U[z] $.  Thus we can identify $ \CW_\mu $ with the quotient $ U[z] \lmod U((z^{-1})) T_1[[z^{-1}]] z^\mu U_-((z^{-1})) / U_-[z] $ and we write $ \pi $ for this isomorphism.

The scheme $ \CW_\mu $ is the moduli space of the following data
(cf.~\cite[2(ii)]{bfn16}):
(a) a $G$-bundle $\CP$ on $\BP^1$; (b) a trivialization
$\sigma\colon \CP_{\on{triv}}|_{\widehat\BP{}^1_\infty}\iso\CP|_{\widehat\BP{}^1_\infty}$
in the formal neighbourhood of $\infty\in\BP^1$;
(c) a $B$-structure $\phi$ on $\CP$ of degree $w_0\mu$ having fiber
$B_-\subset G$ at $\infty\in\BP^1$ (with respect to the trivialization $\sigma$
of $\CP$ at $\infty\in\BP^1$). This is explained in~\cite[2(xi)]{bfn16}.
In particular, $\CW_\mu$ contains the finite dimensional affine
varieties $\ol\CW{}_\mu^\lambda$ (generalized slices) for the dominant coweights
$\lambda$, and the closed subvariety $\ol\CW{}_\mu^\lambda\subset\CW_\mu$ is cut
out by the condition that $\sigma$ extends as a rational trivialization with
a unique pole at $0\in\BP^1$, and the order of the pole of $\sigma$ at
$0\in\BP^1$ is $\leq\lambda$.

For $ \mu_1, \mu_2 $ antidominant, we define shift maps
$ \iota_{\mu, \mu_1, \mu_2}\colon \CW_{\mu + \mu_1 + \mu_2} \rightarrow \CW_\mu $ by $ g \mapsto \pi(z^{-\mu_1} g z^{-\mu_2}) $.

\rem{}
Note that $ \CW_0 $ is the group $ G_1[[z^{-1}]] $.  Moreover, for $ \mu $ dominant, we can identify $ \CW_\mu $ with the $G_1[[z^{-1}]] $ orbit of $ z^\mu $ in the thick affine Grassmannian $ G((z^{-1}))/G[z] $.  In this case, the shift map $ \iota_{\mu, 0,-\mu}\colon \CW_0 \rightarrow \CW_\mu $ is exactly the action map.  In fact, $ \CW_\mu = \mathcal{G}_\mu z^\mu $, where $ \mathcal{G}_\mu $ is the subgroup of $ G_1[[z^{-1}]] $ defined in \cite[B(viii)(a)]{bfn16}.
\erem

Recall the multiplication morphisms $m_{\mu_1,\mu_2}^{\lambda_1,\lambda_2}\colon
\ol\CW{}_{\mu_1}^{\lambda_1}\times\ol\CW{}_{\mu_2}^{\lambda_2}\to
\ol\CW{}_{\mu_1+\mu_2}^{\lambda_1+\lambda_2}$ constructed in~\cite[2(vi)]{bfn16}.
We define the multiplication morphism
$m_{\mu_1, \mu_2}\colon \CW_{\mu_1} \times \CW_{\mu_2} \rightarrow \CW_{\mu_1 + \mu_2}$
by the formula $m_{\mu_1,\mu_2}(g_1, g_2) = \pi(g_1 g_2)$.
Comparing the constructions of~\cite[2(vi) and 2(xi)]{bfn16}, we see that
$m_{\mu_1,\mu_2}^{\lambda_1,\lambda_2}$ is the restriction of $m_{\mu_1,\mu_2}$.

\lem{compatible}
The shift maps and multiplication maps are compatible.  More precisely, let $ \mu_1, \mu_2 $ be any coweights and let $ \nu_1, \nu_2 $ be antidominant coweights. The following diagram commutes.
\[
\xymatrix{
\CW_{\mu_1 + \nu_1} \times \CW_{\mu_2 + \nu_2} \ar[r] \ar[d]_{\iota_{\mu_1, \nu_1, 0} \times \iota_{\mu_2, 0, \nu_2}} & \CW_{\mu_1 + \mu_2 + \nu_1 + \nu_2} \ar[d]^{\iota_{\mu_1 + \mu_2, \nu_1, \nu_2}} \\  \CW_{\mu_1} \times \CW_{\mu_2} \ar[r] & \CW_{\mu_1 + \mu_2}
}
\]

\elem
\prf
A simple diagram chase show that for $ (g_1,g_2) \in \CW_{\mu_1 + \nu_1} \times \CW_{\mu_2 + \nu_2}$, both paths are computed by $ \pi(z^{-\nu_1} g_1 g_2 z^{-\nu_2})$.  (Here we use that if $ u \in U[z] $, then $ z^{-\nu_1} u z^{\nu_1} \in U[z] $.)
\epr

For $ s \in \Cx$, and $ g(z) \in G((z^{-1})) $, we define $ \kappa_s(g(z)) = g(sz) $.  This loop rotation action does not preserve $ \CW_\mu \subset G((z^{-1}))$.  But given a pair of coweights $ \nu_1, \nu_2 $ such that $ \nu_1 + \nu_2 = \mu $, we define an action $\kappa^{\nu_1, \nu_2} $ of $ \Cx $ on $ \CW_\mu $ by
$$
\kappa^{\nu_1, \nu_2}_s(g) = s^{-\nu_1} \kappa_s(g) s^{-\nu_2}
$$

\ssec{sclass}{Classical limit}
Let $p_i\colon U\to\BC$, $p_i^-\colon U_-\to\BC$, $\sfp_i\colon T\to\BC^\times$ be the projections according to
a simple root $\alpha_i$.  Then we get maps $ p_i^{(r)}\colon U_1[[z^{-1}]] \rightarrow \BC $ given by taking the coefficient of $ z^{-r} $ in $ p_i$.  Similarly, we get functions $ {p_i^-}^{(r)} $ and $ \sfp_i^{(r)} $.  Using the Gauss decomposition of an element $ u h z^\mu u_- \in \CW_\mu $, we get functions $ p_i^{(r)}, {p_i^-}^{(r)} $ and $ \sfp_i^{(r)} $ on $ \CW_\mu$  by
\eq{functions via gauss}
p_i^{(r)}(g) := p_i^{(r)}( u), \ \ {p_i^-}^{(r)}(g) := {p_i^-}^{(r)}(u_-), \ \ \sfp_i^{(r)}(g) := \sfp_i^{(r)}( h z^\mu)
\end{equation}
These functions can also be described using generalized minors (i.e. matrix coefficients), analogously to \cite{kwwy}.

As described in \cite{kwwy}, $\CW_0 = G_1[[z^{-1}]]$ can be given the structure of a Poisson-Lie group, corresponding to Yang's Manin triple. The ring of
functions $\BC[\CW_0]$ is graded via the loop rotation action.
The following result is a reformulation of \cite[Theorem 3.9]{kwwy}:
\th{kwwy-thm}
There is an isomorphism of graded Poisson-Hopf algebras $\gr^{F_{0,0}} Y \cong \BC[\CW_0]$, such that
\eq{eq: kwwy thm}
H_i^{(r)} \mapsto \sfp_i^{(r)}, \ \ E_i^{(r)} \mapsto p_i^{(r)}, \ \ F_i^{(r)} \mapsto {p_i^-}^{(r)}
\end{equation}
\eth


\prop{gauss and triangular decomp}
\mbox{}

\begin{itemize}
\item[(a)] For any coweight $\nu$, there is an isomorphism of graded Poisson algebras $\gr^{F_{\nu, -\nu}} Y \cong  \BC[\CW_0]$ such that \refe{eq: kwwy thm} holds, and where the grading on $\BC[\CW_0]$ comes from the $\kappa^{\nu, -\nu}$ action.

\item[(b)] This restricts to a graded isomorphism
\[
\xymatrix{
\gr^{F_{\nu,-\nu}} Y^> \ar@{^{(}->}[d] &\ar[r]^{\sim}  && \BC[U_1[[z^{-1}]]] \ar@{^{(}->}[d] \\  \gr^{F_{\nu,-\nu}} Y& \ar[r]^{\sim} && \BC[\CW_0]
}
\]
where the right-hand vertical arrow corresponds to the map $\CW_0 \rightarrow U_1[[t^{-1}]], \quad u h u_- \mapsto u$.
 Similarly, there are isomorphisms
$$
\gr^{F_{\nu,-\nu}} \Cartan \cong \BC[T_1[[z^{-1}]]], \ \  \gr^{F_{\nu, -\nu}} Y^< \cong \BC[ U_{-,1}[[z^{-1}]]] $$
\end{itemize}
\eprop
Part (b) reflects the triangular decomposition $ Y $ on the one hand, and the Gauss decomposition of $\CW_0$ on the other.
\prf
For (a), it suffices to show that the grading \refe{grading by coweight} corresponds to the $\BC^\times$--action on $\CW_0$ given by $s\cdot g = s^{-\nu} g s^{\nu}$. First, we claim that the latter is a Poisson action: this follows from the explicit formula \cite[Proposition 2.13]{kwwy} for the Poisson bracket in terms of generalized minors.  Now, under the corresponding grading on $\BC[\CW_0]$ the degrees of generators $p_i^{(r)}, {p_i^-}^{(r)}, \sfp_i^{(r)}$ agree with the grading \refe{grading by coweight}.  Since both are Poisson gradings, and these are Poisson generators of $\BC[\CW_0]$, the two gradings agree.

We now prove (b), in the case of $Y^>$.  Recall that $G_1[[z^{-1}]]$ is a Poisson algebraic group, so $z^{-1}\fg[[z^{-1}]]$ is a Lie bialgebra~(see \cite[2C]{kwwy}).  Under its cobracket, we have
$$ \delta( z^{-1} \mathfrak{b}^-[[z^{-1}]] ) \subset (z^{-1} \mathfrak{b}^-[[z^{-1}]]) \otimes (z^{-1} \fg[[z^{-1}]]) + (z^{-1} \fg[[z^{-1}]]) \otimes (z^{-1} \mathfrak{b}^-[[z^{-1}]])$$
By \cite[Theorem 6]{sts}, this implies that there is an induced structure of Poisson homogeneous space on $G_1[[z^{-1}]] / B_1^-[[z^{-1}]] $. In other words, $\BC[ G_1[[z^{-1}]] ]^{B_1^-[[z^{-1}]]}$ is a Poisson subalgebra of $\BC[G_1[[z^{-1}]]$.

The map $G_1[[z^{-1}]] \rightarrow U_1[[z^{-1}]]$, $u h u_- \mapsto u$ identifies $\BC[U_1[[z^{-1}]]] \cong \BC[ G_1[[z^{-1}]] ]^{B_1^-[[z^{-1}]]}$.
 Since the functions $p_i^{(r)}$ lie in this subalgebra, $\BC[U_1[[z^{-1}]]]$
contains the Poisson subalgebra that they generate.  Therefore we can identify
$\gr^{F_{\nu,-\nu}} Y^> \subset \BC[U_1[[z^{-1}]]]$. We will prove equality by a
dimension count.  It suffices to consider the filtration $F^{0,0} Y^>$ and the
loop rotation action $\kappa^{0,0}$. By the PBW theorem, $\gr^{F_{0,0}} Y^>$ has
Hilbert series
\eq{hilbert series}
\prod_{i=1}^\infty \frac{1}{(1-q^i)^{\dim \mathfrak{n}}}
\end{equation}
Since $U_1[[z^{-1}]]$ is a pro-unipotent group, the Hilbert series of
$\BC[U_1[[z^{-1}]]]$ for the loop rotation is the same as that of
$\operatorname{Sym} (z^{-1} \mathfrak{n}[[z^{-1}]])$. This is also given
by~\refe{hilbert series}, proving the claim.
\epr

Consider a coweight $\mu$. By \refr{maps from Yangian 2}, there is a surjection of algebras $Y^> \twoheadrightarrow Y_\mu^>$ defined by $E_i^{(p)} \mapsto E_i^{(p)}$.  By the PBW theorem for $Y_\mu$, it follows that this map is an isomorphism.  Moreover, for any coweights $\nu_1, \nu_2$ such that $\nu_1 + \nu_2 = \mu$, we see from \refe{grading on Ymu} that it is an isomorphism of filtered algebras $F^{\nu_1, -\nu_1} Y^> \stackrel{\sim}{\longrightarrow} F^{\nu_1, \nu_2} Y_\mu^>$, where these filtrations are inherited as subspaces of $Y$ and $Y_\mu$, respectively.

By the Gauss decomposition there is a projection map $\CW_\mu \twoheadrightarrow U_1[[z^{-1}]]$, $u h z^\mu u_- \mapsto u$.  This provides an embedding $\BC[ U_1[[z^{-1}]]] \hookrightarrow \BC[\CW_\mu]$.  Consider the composition of maps:
\eq{class 1}
\gr^{F_{\nu_1,\nu_2}} Y_\mu^> \stackrel{\sim}{\rightarrow} \gr^{F_{\nu_1, -\nu_1}} Y^> \stackrel{\sim}{\rightarrow} \BC[U_1[[z^{-1}]]] \hookrightarrow \BC[ \CW_\mu]
\end{equation}
where the second map comes from \refp{gauss and triangular decomp}.  Note that under this composition, $E_i^{(r)} \mapsto p_i^{(r)}$.  This map is graded, where $\BC[\CW_\mu]$ is graded by the action $\kappa^{\nu_1,\nu_2}$.

Analogously, there are compositions
\begin{align}
\gr^{F_{\nu_1, \nu_2}} Y_\mu^< & \stackrel{\sim}{\rightarrow} \gr^{F_{-\nu_2, \nu_2}} Y^< \stackrel{\sim}{\rightarrow} \BC[U_{-,1}[[z^{-1}]]] \hookrightarrow \BC[\CW_\mu], \label{class 2} \\
\gr^{F_{\nu_1, \nu_2}} \Cartan_\mu & \stackrel{\sim}{\rightarrow} \gr^{F_{0,0}} \Cartan \stackrel{\sim}{\rightarrow} \BC[T_1[[z^{-1}]]] \hookrightarrow \BC[\CW_\mu] \label{class 3}
\end{align}
which take $F_i^{(r)} \mapsto {p_i^-}^{(r)}$ and $H_i^{(r)} \mapsto \sfp_i^{(r)}$.

From \refc{polynomial ring}, there is a triangular decomposition (of algebras)
$$ \gr^{F_{\nu_1, \nu_2}} Y_\mu \cong (\gr^{F_{\nu_1,\nu_2}} Y_\mu^> )\otimes (\gr^{F_{\nu_1, \nu_2}} \Cartan_\mu) \otimes (\gr^{F_{\nu_1, \nu_2}} Y_\mu^<) $$
By the Gauss decomposition $\CW_\mu$, we get:
\th{class}
For any coweights $ \nu_1, \nu_2 $ such that $\nu_1 + \nu_2 = \mu $, the tensor product of the maps \refe{class 1}, (\ref{class 2}) and (\ref{class 3}) yields an isomorphism of graded algebras
\eq{eq: class}
\gr^{F_{\nu_1, \nu_2}} Y_\mu \stackrel{\sim}{\longrightarrow} \BC[\CW_\mu]
\end{equation}
Here the grading on $\BC[\CW_\mu]$ comes from the $\kappa^{\nu_1,\nu_2}$ action.  Moreover, the isomorphism is compatible with the shift maps $ \iota_{\mu, \mu_1, \mu_2} $ on both sides.
\eth

\prf
The only thing left to prove is the compatibility of the shift maps $ \iota_{\mu, \mu_1, \mu_2}\colon Y_\mu \rightarrow Y_{\mu + \mu_1 + \mu_2} $ and $ \iota_{\mu, \mu_1, \mu_2}\colon \CW_{\mu + \mu_1 + \mu_2} \rightarrow \CW_\mu $ with the above isomorphism $\gr Y_\mu \cong \BC[\CW_\mu]$.  Since this isomorphism is constructed using the Gauss decomposition, it suffices to prove the compatibility on each piece separately.
For $Y_\mu^=$ and $T_1[[z^{-1}]]$ it follows from the construction
of~(\ref{class 3}).
Now it suffices to check the compatibility with the isomorphisms~\refe{class 1}
and~(\ref{class 2}).  Since these are similar, we will just concentrate on the
isomorphism \eqref{class 2}.

For any $ \eta $ antidominant, define a map $ \psi_\eta\colon U_{-,1}[[z^{-1}]] \rightarrow U_{-,1}[[z^{-1}]] $ by $ \psi_\eta(u) = \pi(z^\eta u z^{-\eta}) $, where as usual $ \pi $ denotes the projection $ \pi\colon U_-((z^{-1})) \rightarrow U_-((z^{-1}))/U_-[z] \cong U_{-,1}[[z^{-1}]] $.

For any coweight $ \mu $ and any antidominant $\mu_1, \mu_2 $ we have the commutativity of the diagram
\[
\xymatrix{
\CW_{\mu + \mu_1 + \mu_2} \ar[d]^{\iota_{\mu, \mu_1, \mu_2}} \ar[r] & U_{-,1}[[z^{-1}]] \ar[d]^{\psi_{\mu_2}} \\
\CW_\mu \ar[r] & U_{-,1}[[z^{-1}]]
}
\]

On the other hand, consider the shift map $ \psi_\eta\colon Y^< \rightarrow Y^< $ given by $ F_i^{(q)} \mapsto F_i^{(q - \langle \eta, \alpha_i \rangle)} $.  Then $ \psi_{\mu_2} $ is the restriction of the shift map $ \iota_{\mu, \mu_1, \mu_2} $ to $ Y^< \cong Y_\mu^<$.

 Thus, in order to show that \eqref{class 2} is compatible with the shift $ \iota_{\mu, \mu_1, \mu_2} $, is suffices to show that the isomorphism $ \gr Y^< \cong \BC[U_{-,1}[[z^{-1}]]] $ is compatible with the two $ \psi_\eta$ maps, for any antidominant $ \eta $.

Since $ -\eta $ is dominant, it follows from~\cite[Theorem~3.12]{kwwy}, that we have a commutative diagram
\[
 \xymatrix{
\gr Y_{-\eta} \ar[r] \ar[d]^{\iota_{-\eta, 0, \eta}} & \BC[\CW_{-\eta}] \ar[d] \\
\gr Y \ar[r] & \BC[G_1[[z^{-1}]]
}
\]
where the right vertical arrow is just (dual to) the action map $ g \mapsto gz^{-\eta} $. This action map is compatible with the shift map $ \psi_\eta $ on $ U_{-,1}[[z^{-1}]] $.  Thus we deduce that the isomorphism $ \gr Y^< \cong \BC[U_{-,1}[[z^{-1}]]] $ is compatible with the two shift maps $ \psi_\eta $ and this completes the proof.
\epr

\rem{PoissonWmu}
The above theorem provides $ \CW_\mu $ with a Poisson structure.  It is compatible with the Poisson structure on $ \oW^\lambda_\mu $ constructed in \cite{bfn16}.  This is because the Poisson structure on $ \oW^\lambda_\mu $ comes from its quantization (the quantized Coulomb branch) and we have a surjective map from $ \Rees Y_\mu $ to this quantized Coloumb branch provided by \cite[Theorem~B.18]{bfn16}.
\erem

\lem{lem-sy-gen}
Let $\mu$ be an antidominant coweight. Then the classical shifted Yangian $ \gr^F Y_\mu \cong \BC[\CW_{\mu}]$ is generated by $E_i^{(1)} = p_i^{(1)}$, $F_i^{(1)} = {p_i^-}^{(1)}$, $H_{i}^{(-\la\mu,\alpha_i\ra+1)} = \sfp_i^{(1)}$ and $H_{i}^{(-\la\mu,\alpha_i\ra+2)} = \sfp_i^{(2)}$ as a Poisson algebra.
\elem
\prf
From the PBW theorem, $ \gr^F Y_\mu $ is generated by the PBW variables.  These variables are all constructed from the generators $ E_i^{(r)}, F_i^{(r)}, H_i^{(r)} $ using Poisson brackets.  So it suffices to show that we can construct these
generators.

Indeed, $\{H_{i}^{(-\la\mu,\alpha_i\ra+2)},E_i^{(r)}\}$ is $2E_i^{(r+1)}$ plus some expression in $H_{i}^{(-\la\mu,\alpha_i\ra+1)}$ and $E_i^{(r)}$, and the same
holds for $F_i^{(r)}$, hence the algebra generated by the above elements contains $E_i^{(r)}$, $F_i^{(r)}$ for all $r\in\BZ_{>0}$. Every $H_i^{(s)}$ with positive $s$ is a bracket of some $E$ and $F$, hence we have $H_{i}^{(-\la\mu,\alpha_i\ra+r)}$ for all $r\in\BZ_{>0}$.
\epr

\ssec{comult}{Classical multiplication and the coproduct}
Let $ \mu_1, \mu_2 $ be coweights.

The multiplication map $ m\colon \CW_{\mu_1} \times \CW_{\mu_2} \rightarrow \CW_{\mu_1 + \mu_2} $ gives us an algebra map
$$
\Delta^1_{\mu_1, \mu_2}\colon \BC[\CW_{\mu_1 + \mu_2}] \rightarrow \BC[\CW_{\mu_1}] \otimes \BC[\CW_{\mu_2}]
$$

On the other hand, the coproduct
$$
Y_{\mu_1 + \mu_2} \rightarrow Y_{\mu_1} \otimes Y_{\mu_2}
$$
is compatible with the filtrations $ F_{\mu_1, \mu_2} Y_{\mu_1 + \mu_2} , F_{\mu_1, 0} Y_{\mu_1}, F_{0,\mu_2} Y_{\mu_2} $ and thus gives rise to a map
$$
\Delta^2_{\mu_1, \mu_2}\colon \gr Y_{\mu_1 + \mu_2} \rightarrow \gr Y_{\mu_1} \otimes \gr Y_{\mu_2}
$$

Under the isomorphism \reft{class}, this gives us another map $ \BC[\CW_{\mu_1 + \mu_2}] \rightarrow \BC[\CW_{\mu_1}] \otimes \BC[\CW_{\mu_2}] $.

When $ \mu_1 = \mu_2 = 0 $, we have $ \CW_{\mu_1} = \CW_{\mu_2} = \CW_{\mu_1 + \mu_2} = G_1[[z^{-1}]] $ and the multiplication map is just the ordinary multiplication map on $ G_1[[z^{-1}]] $.  On the other hand, the Drinfeld-Gavarini duality (also called Quantum Duality Principle; see \cite{kwwy}) shows us that the coproduct on $ \gr Y $ is just the usual coproduct on $\BC[G_1[[z^{-1}]]]$.  Thus we conclude that $ \Delta^1_{0,0} = \Delta^2_{0,0}$.  So it is natural to expect that for all $ \mu_1, \mu_2$, we have $ \Delta^1_{\mu_1, \mu_2} = \Delta^2_{\mu_1, \mu_2}$.  (In \refc{quantized multiplication}, we will show that this holds when $ \fg = \fsl_2 $ and $ \mu_1, \mu_2 $ are antidominant.)

\prop{agreegenerators}
If $\mu_1, \mu_2 $ are antidominant, then the $\Delta^1_{\mu_1, \mu_2} $ and $ \Delta^2_{\mu_1, \mu_2} $ agree on the Poisson generators $ p_i^{(1)}, {p_i^-}^{(1)},\sfp_i^{(1)}, \sfp_i^{(2)}$.
\eprop

\prf
Let $\mu_1$ and $\mu_2$ be both antidominant. Then the multiplication map $\CW_{\mu_1}\times\CW_{\mu_2}\to\CW_{\mu_1+\mu_2}$ is given just by multiplication in $G((z^{-1}))$. Let $u^k h^k z^{\mu_k}u_-^k$, $k=1,2$ be any elements of $\CW_{\mu_k}$. Then the product is
$$
u^1z^{\mu_1}h^1 u_-^1u^2h^2z^{\mu_2}u_-^2.
$$
We take the Gaussian decomposition of the middle part of this expression,
i.e.\ we write
$$
h^1 u_-^1u^2h^2=u'h'u'_-,
$$
where $u'\in U_1[[z^{-1}]]$, $u'_-\in U_{-,1}[[z^{-1}]]$, $h'\in T_1[[z^{-1}]]$. Then the product is
$$
u^1\big(z^{\mu_1}u'z^{-\mu_1}\big) h'z^{\mu_1+\mu_2}\big(z^{-\mu_2}u'_-z^{\mu_2}\big)u_-^2.
$$

The first and second Fourier coefficients of $u', u'_-$ and $h'$ are easy to
compute. To write the answer we need to define the functions $p^{(r)}_\gamma$,
$p^{-(r)}_\gamma$ for any positive root $\gamma$. Fix an isomorphism
$c\colon \fn\to U$ between the formal neighborhood of $0\in\fn$  and the formal
neighborhood of $e\in U$ such that $d_0c=\on{Id}\colon \fn\to\fn$ (e.g.
$c=\exp\colon \fn\to U$). Let $P_\gamma\colon \fn\to\fn_\gamma=\BC$ be the
projection to the corresponding root space. Define $p^{(r)}_\gamma$ as the
coefficient of $z^{-r}$ in the composite map
$P_\gamma\circ c^{-1}\colon U_1[[z^{-1}]]\to\BC$, and similarly for
$p^{-(r)}_\gamma$. Note that $p^{(1)}_\gamma$ and $p^{-(1)}_\gamma$ do not depend on
the choice of $c$. We get the following formulas
\eq{31}
\Delta^1(\sfp_{i}^{(1)})=\sfp_{i}^{(1)}\otimes1+1\otimes \sfp_{i}^{(1)},
\end{equation}

\begin{equation}
\Delta^1(\sfp_{i}^{(2)})=\sfp_{i}^{(2)}\otimes1+1\otimes \sfp_{i}^{(2)}
+\sfp_{i}^{(1)}\otimes \sfp_{i}^{(1)}-\sum\limits_{\gamma>0}\la\alpha_i,\gamma\ra {p^{-}_\gamma}^{(1)}\otimes p_\gamma^{(1)},
\end{equation}

\begin{eqnarray}
\Delta^1(p_i^{(1)})=p_i^{(1)}\otimes1+\delta_{\la\mu_1,\alpha_i\ra,0}1\otimes p_i^{(1)},\\
\Delta^1({p^-_i}^{(1)})=\delta_{\la\mu_2,\alpha_i\ra,0}{p^-_i}^{(1)}\otimes1+1\otimes {p^-_i}^{(1)}.
\end{eqnarray}

Note that $ p_\gamma^{(1)} $ is the image of $ E_\gamma^{(1)} $ under the isomorphism from \reft{class} (and similarly ${p^{-}_\gamma}^{(1)}$).

Comparing with \refl{4generatorscoproduct} gives the desired result.
\epr

%
%
%

\conj{Poisson} The multiplication map $\CW_{\mu_1} \times \CW_{\mu_2} \rightarrow \CW_{\mu_1 + \mu_2} $ is Poisson.
\econj
We know that it is true when $ \mu_1 = \mu_2 = 0 $, since in this case it is just the usual multiplication in the Poisson group $ G_1[[z^{-1}]] $.

\prop{Deltaagree}
If \refco{Poisson} holds, then the two maps $ \Delta^1 $ and $ \Delta^2 $ agree.
\eprop

\prf
If $ \mu_1, \mu_2 $ are antidominant, then \refp{agreegenerators} shows that $ \Delta^1_{\mu_1, \mu_2} $ and $ \Delta^2_{\mu_1, \mu_2} $ agree on the Poisson generators for the algebra $ \BC[\CW_{\mu_1 + \mu_2}]$.  Since both maps are Poisson, they must agree.

Now, suppose that $ \mu_1, \mu_2 $ are arbitrary.  As in the proof of \reft{generalcoproduct}, we can embed $ \BC[\CW_{\mu_1 + \mu_2}] $ into an antidominant situation.  Both $\Delta^1 $ and $ \Delta^2 $ are compatible with this embedding.  For $ \Delta^1$, this follows from \refl{compatible}, while for $\Delta^2$ this follows from the construction in \reft{generalcoproduct}.  Thus the result follows.
\epr

\sec{toda}{Toda and comultiplication}

Throughout this section we work with shifted Yangians of $ \fsl_2 $ and the Toda lattice for $GL(n)$.

\ssec{def-sl2}{A presentation of $\fsl_2$ shifted Yangians}

Following~\cite[Definition~2.24]{m03}, we can write down the defining relations
of the shifted Yangian $ \bY_m(\fsl_2)$ of $\fsl_2$ in current form. In this
case $\mu = m\in\BZ$, and from now on we assume $m\leq0$, i.e.\ our Yangian
is antidominantly shifted.  We introduce the series
$E(u):=\sum\limits_{p=1}^\infty E^{(p)}u^{-p}$, $F(u):=\sum\limits_{p=1}^\infty F^{(p)}u^{-p}$, $H(u):=u^m+\sum\limits_{p=-m+1}^\infty H^{(p)}u^{-p}$. Then the defining relations can be written in the following form:
\eq{311}
[H(u),H(v)]=0,
\end{equation}
\eq{312}
[E(u),F(v)]=-\hbar\frac{H(u)-H(v)}{u-v},
\end{equation}
\eq{313}
[E(u),E(v)]=-\hbar\frac{(E(u)-E(v))^2}{u-v},
\end{equation}
\eq{314}
[F(u),F(v)]=\hbar\frac{(F(u)-F(v))^2}{u-v},
\end{equation}
\eq{315}
[H(u),E(v)]=-\hbar\frac{H(u)(E(u)-E(v))+(E(u)-E(v))H(u)}{u-v},
\end{equation}
\eq{316}
[H(u),F(v)]=\hbar\frac{H(u)(F(u)-F(v))+(F(u)-F(v))H(u)}{u-v}.
\end{equation}


\ssec{u-eps}{Some automorphisms of $\bY_m(\fsl_2)$} It is clear from the formulas above that the additive shifts of the variable $u$ act on the shifted Yangian $\bY_m$ by automorphisms. We denote the corresponding automorphisms by $T_{\varepsilon}: E(u)\mapsto E(u-\varepsilon), F(u)\mapsto F(u-\varepsilon), H(u)\mapsto H(u-\varepsilon)$.

\ssec{sl2-coprod}{Coproduct on $\bY_m(\fsl_2)$ for $m\leq 0$} The following formulas define the coproduct $\Delta$ on the usual Yangian $\bY(\fsl_2)=\bY_0(\fsl_2)$ (see \cite[Definition~2.24]{m03}).
\eq{delta-e} \Delta\colon E(u)\mapsto E(u)\otimes1+\sum\limits_{j=0}^\infty(-1)^jF(u+\hbar)^jH(u)\otimes E(u)^{j+1};
\end{equation}
\eq{delta-f} \Delta\colon F(u)\mapsto 1\otimes F(u)+\sum\limits_{j=0}^\infty(-1)^j F(u)^{j+1}\otimes H(u)E(u+\hbar)^j;
\end{equation}
\eq{delta-h} \Delta\colon H(u)\mapsto\sum\limits_{j=0}^\infty(-1)^j (j+1)F(u+\hbar)^jH(u)\otimes H(u)E(u+\hbar)^j.
\end{equation}

\prop{sl2-coprod}
Let $l,k\le 0$ and $m = l +k$. Then the coproduct $\Delta\colon \bY_m(\fsl_2)\to \bY_{l}(\fsl_2)\otimes \bY_{k}(\fsl_2)$ is also given by the formulas  \refe{delta-e}--\refe{delta-h}, where by abuse of notation $E(u), F(u), H(u)$ denote the generating series for each respective algebra.
\eprop
\prf
We make use of the following commutative diagram:
\eq{eq:sl2-coprod}
\begin{CD}
\bY_0 @>{\Delta}>> \bY_0\otimes \bY_0 \\
@VV{\iota_{0,l,k}}V
@VV{\iota_{0,l,0}\otimes \iota_{0,0,k}}V\\
\bY_m @>{ \Delta}>> \bY_l \otimes \bY_k
\end{CD}
\end{equation}
from the statement of \reft{generalcoproduct}.  By commutativity, we may compute the coproduct of any elements in the image of $\iota_{0,l,k}\colon \bY_0 \rightarrow \bY_m$ by passing around the top of the diagram.  Modulo accounting for the shift homomorphisms involved, this is given by Molev's formulas \refe{delta-e}--\refe{delta-h}.

Note that the homomorphism $\iota_{0,l,k}\colon \bY_0 \rightarrow \bY_m$ is not surjective: the generators  $E^{(r)}, F^{(s)}$ are not in its image for $1\leq r\leq l$ and $1\leq s\leq k$.  However, the coproducts of these elements were explicitly described in \refss{Definition of coproduct}.  Piecing these coproducts together with those computed above, the claim follows.
\epr

\cor{quantized multiplication}
In this case, the comultiplication $ \Delta\colon \bY_m(\fsl_2)\to \bY_{l}(\fsl_2)\otimes \bY_{k}(\fsl_2)$ quantizes the multiplication map $ \CW_k \times \CW_l \rightarrow \CW_m $.  (In other words, in the notation of \refss{comult}, we have $ \Delta_{k,l}^1 = \Delta_{k,l}^2$.)
\ecor
\prf
For the ordinary Yangian $Y(\fsl_2)$, the classical limit of the formulas \refe{delta-e}-- \refe{delta-h} corresponds to multiplication in the group $(PGL_2)_1[[z^{-1}]]$, written with respect to Gauss decompositions.  Explicitly, any element of $(PGL_2)_1[[z^{-1}]]$ can be written uniquely in the form
\eq{gauss mult 1} g=  \begin{pmatrix} 1 & 0 \\ e & 1 \end{pmatrix} \begin{pmatrix} 1 & 0 \\ 0 & h \end{pmatrix} \begin{pmatrix} 1 & f \\ 0 & 1 \end{pmatrix}
\end{equation}
with $e, f\in z^{-1} \BC[[z^{-1}]]$ and $h \in 1 + z^{-1} \BC[[z^{-1}]]$.  The product of two such elements, rewritten in the above form, is
\eq{gauss mult 2}g_1 g_2 = \begin{pmatrix} 1 & 0 \\
e_1 + \frac{h_1 e_2}{1+f_1 e_2} & 1 \end{pmatrix} \begin{pmatrix} 1 & 0 \\
0 & \frac{h_1 h_2}{(1+f_1 e_2)^2} \end{pmatrix} \begin{pmatrix} 1 &
\frac{f_1 h_2}{1+ f_1 e_2} + f_2 \\ 0 & 1 \end{pmatrix}
\end{equation}
On the level of coordinate rings, this corresponds precisely to \refe{delta-e}--\refe{delta-h} with $\hbar = 0$.

Any $g\in \CW_n$ can also be written uniquely in the form \refe{gauss mult 1}, but where now we take $h\in z^n+ z^{n-1} \BC[[z^{-1}]]$.  When $k, l\leq 0$ the multiplication map $\CW_k \times \CW_l \rightarrow \CW_m$ is given by matrix multiplication, and \refe{gauss mult 2} generalizes immediately.  This proves the claim.
\epr

\rem{general-coproduct antidominant}
Assuming one has explicit formulas for the coproduct $\Delta\colon Y_0(\fg) \rightarrow Y_0(\fg)\otimes Y_0(\fg)$, a similar logic to \refp{sl2-coprod} gives explicit formulas for the coproduct $\Delta\colon Y_\mu(\fg) \rightarrow Y_{\mu_1}(\fg) \otimes Y_{\mu_2} (\fg)$ in the case when $\mu,\mu_1,\mu_2$ are all antidominant.
\erem

\ssec{ysl2mu}{Shifted Yangian of $\fsl_2$ and Toda}

According to~\cite[Theorem~B.18]{bfn16}, for a simple simply laced $\fg$,
there is a homomorphism from the shifted Yangian $\bY_{\mu}(\fg)$ to a quantized
Coulomb branch. 
Let us describe it in the simplest case $\fg=\fsl_2$, $\mu=-2n$, $\lambda=0$ where $n$ is a positive integer.

\prop{y2hgr}(\cite[Theorem~B.18]{bfn16}) There is a homomorphism
$\overline{\Phi}{}_{-2n}^{0}\colon \bY_{-2n}(\fsl_2)\to H_\bullet^{G^\vee_\CO\rtimes\BC^\times}(\Gr_{G^\vee})$ for $G=G^\vee=GL_n$. We have
\begin{gather*}
\overline{\Phi}{}_{-2n}^{0}(A^{(p)})= e_p \in
H^\bullet_{G^\vee_\CO\rtimes\BC^\times}(pt)\subset
H_\bullet^{G^\vee_\CO\rtimes\BC^\times}(\Gr_{G^\vee}), \\
\overline{\Phi}{}_{-2n}^{0}(F^{(1)})=[\Gr_{G^\vee}^{\varpi_1}], \quad
\overline{\Phi}{}_{-2n}^{0}(E^{(1)})=(-1)^n[\Gr_{G^\vee}^{-\varpi_1}]
\end{gather*}
\eprop

Note that according to \refl{lem-cl-gen} and the paragraph following it,
the homomorphism $\overline{\Phi}{}_{-2n}^{0}$ is surjective.

The ring $H^\bullet_{G^\vee_\CO\rtimes\BC^\times}(pt)$ gets identified with the center of the universal enveloping algebra $ZU_\hbar(\fg)$ via the Satake correspondence. The Harish-Chandra homomorphism identifies the center $ZU_\hbar(\fg)$ with the algebra $\BC[\fh^*]^W$ of $W$-invariant polynomials with respect to the $W$-action shifted by $-\hbar\rho$.  Here we write $ e_p $ for the $p$-th elementary symmetric function in $\BC[\fh^*]^W$  (shifted by $-\hbar\rho_n$ where $\rho_n:=(\frac{n-1}{2},\frac{n-3}{2},\ldots,\frac{-n+1}{2})$).  So we can compute the images of $A^{(1)}$ and $A^{(2)}$ as elements of the center of the universal enveloping algebra $ZU_\hbar(\fg)$. Combining it with $\bbeta\colon H_\bullet^{G^\vee_\CO\rtimes\BC^\times}(\Gr_{G^\vee})\iso\CT_\hbar^n$ we get

\prop{y2tn} There is a surjective homomorphism
$\bbeta\circ\overline{\Phi}{}_{-2n}^{0}\colon \bY_{-2n}(\fsl_2)\to \CT_\hbar^n$ which takes the subalgebra generated by the $ A^{(p)} $ to $ ZU_\hbar(\fg) $.  In particular, we can see that the homomorphism takes $A^{(1)}$ to $C_1$, $A^{(2)}$ to $C_2-(\rho_n,\rho_n)\hbar^2$, $E^{(1)}$ to $-\varDelta$ and $F^{(1)}$ to $\varDelta'$.
\eprop

\rem{GKL}
In section 6 of \cite{GKL}, the authors construct certain elements $ A_n(\lambda), B_n(\lambda), C_n(\lambda) \in \CT_\hbar^n[[\lambda]] $.  They observe that these elements satisfy some (but not all) of the relations of the $ \fsl_2 $ Yangian.

It is easy to see that the elements $ A_n(\lambda), B_n(\lambda), C_n(\lambda) $ defined in \cite{GKL} are the images of the same named elements of $ \bY_{-2n}(\fsl_2)[[\lambda]] $ under the homomorphism $\bbeta\circ\overline{\Phi}{}_{-2n}^{0}$.  This explains why these elements satisfy the relations from \cite[(6.5)]{GKL}.

Moreover, the formulas (6.7) from \cite{GKL} are special cases of the formulas from \cite[Corollary~B.17]{bfn16}.
\erem

\ssec{quantum}{Compatibility of the coproducts}
According to \refp{y2tn} there is a homomorphism $\bbeta\circ\overline{\Phi}{}_{-2n}^{0}\colon \bY_{-2n}(\fsl_2)\to \CT_\hbar^n$. Twisting by the additive shift automorphisms
$T_\varepsilon$ (notations of~\refss{u-eps}) gives a family of homomorphisms $\bbeta\circ\overline{\Phi}{}_{-2n}^{0}[\varepsilon]:=\bbeta\circ\overline{\Phi}{}_{-2n}^{0}\circ T_\varepsilon\colon \bY_{-2n}(\fsl_2)\to \CT_\hbar^n$.

We have the following quantum version of \reft{classi}:

\th{quantu}
The following diagram commutes:
$$\begin{CD}
\bY_{-2k-2l}(\fsl_2) @>{\Delta}>> \bY_{-2k}(\fsl_2)\otimes \bY_{-2l}(\fsl_2)\\
@VV{\bbeta\circ\overline{\Phi}{}_{-2k-2l}^{0}}V
@VV{\bbeta\circ\overline{\Phi}{}_{-2k}^{0}[\frac{l\hbar}{2}]\otimes\bbeta\circ\overline{\Phi}{}_{-2l}^{0}[-\frac{k\hbar}{2}]}V\\
\CT_\hbar^{k+l} @>{\tau_{k,l}}>> \CT_\hbar^{k}\otimes \CT_\hbar^{l}
\end{CD}.$$
\eth

\prf
Follows from \refp{25} and \refl{4generatorscoproduct}.
\epr

\bigskip

\footnotesize{
{\bf M.F.}: National Research University
Higher School of Economics, Russian Federation,\\
Department of Mathematics, 6 Usacheva st, Moscow 119048;\\
Skolkovo Institute of Science and Technology;\\
Institute for Information Transmission Problems of RAS;\\
{\tt fnklberg@gmail.com}}

\footnotesize{
{\bf J.K.}:
University of Toronto, Department of Mathematics;\\
Room 6290, 40 St. George Street, Toronto, ON, Canada M5S 2E4;\\
{\tt jkamnitz@gmail.com}}

\footnotesize{
{\bf K.P.}:
University of Toronto, Department of Mathematics;\\
Room 6290, 40 St. George Street, Toronto, ON, Canada M5S 2E4;\\
{\tt khoatd.pham@mail.utoronto.ca}}

\footnotesize{
{\bf L.R.}: National Research University
Higher School of Economics, Russian Federation,\\
Department of Mathematics, 6 Usacheva st, Moscow 119048;\\
Institute for Information Transmission Problems of RAS;\\
{\tt leo.rybnikov@gmail.com}}

\footnotesize{
{\bf A.W.}: Perimeter Institute for Theoretical Physics, \\
31 Caroline St. N, Waterloo, ON, Canada N2L 2Y5 \\
{\tt alex.weekes@gmail.com}}


\begin{thebibliography}{99}

\bibitem[BFM]{bfm05} R.~Bezrukavnikov, M.~Finkelberg, I.~Mirkovi\'c,
{\em Equivariant homology and K-theory of affine Grassmannians and Toda
lattices}, Compos. Math. {\bf 141} (2005), no.~3, 746--768.

\bibitem[BF]{bf08}
R.~Bezrukavnikov, M.~Finkelberg, {\em Equivariant Satake category and
Kostant-Whittaker reduction}, Moscow Math. J. {\bf 8} (2008), 39--72.

\bibitem[BDF]{bdf14} A.~Braverman, G.~Dobrovolska, M.~Finkelberg,
{\em Gaiotto-Witten superpotential and Whittaker $D$-modules on
monopoles}, Adv. Math. {\bf 300} (2016), 451--472.

\bibitem[BFN]{bfn16}
A.~Braverman, M.~Finkelberg, and H.~Nakajima,
{\em Coulomb branches of $3d$ $\CN=4$ quiver gauge theories and slices in the
affine Grassmannian} (with appendices by Alexander Braverman,
Michael Finkelberg, Joel Kamnitzer, Ryosuke Kodera, Hiraku Nakajima,
Ben Webster, and Alex Weekes), arXiv:1604.03625.

\bibitem[BK]{BK}
J.~Brundan, A.~Kleshchev, {\em Shifted Yangians and finite W-algebras}, Advances Math. {\bf 200} (2006), 136--195.

\bibitem[D]{D}
V. Drinfeld, {\em A new realization of Yangians and quantized affine algebras}, Soviet Math. Dokl. {\bf 36} (1988), 212--216.


\bibitem[FT]{ft} L.~Faddeev, L.~Takhtajan,
{\em The quantum method for the inverse problem and the XYZ Heisenberg model},
Russian Math. Surveys {\bf 34} (1979), no. 5, 11--68.

\bibitem[FKMM]{fkmm} M.~Finkelberg, A.~Kuznetsov, N.~Markarian, I.~Mirkovi\'c,
{\em A note on a symplectic structure on the space of $G$-monopoles},
Commun. Math. Phys. {\bf 201} (1999), 411--421.
{\em Erratum}, Commun. Math. Phys. {\bf 334} (2015), 1153--1155;
arXiv:math/9803124, v6.


\bibitem[GKL]{GKL}
A.~Gerasimov, S.~Kharchev, and D.~Lebedev,  {\em Representation Theory and the Quantum Inverse Scattering Method: The Open Toda Chain and the Hyperbolic Sutherland Model}, Int.\ Math.\ Res.\ Notices {\bf 2004}, no.~17, 823--854.

\bibitem[GNW]{GN}
N.~Guay, H.~Nakajima, C.~Wendlandt, {\em Coproduct for Yangians of affine Kac-Moody algebras}, arXiv:1701.05288.

\bibitem[KWWY]{kwwy} J. ~Kamnitzer, B. ~Webster, A. ~Weekes, O. ~Yacobi,
{\em Yangians and quantizations of slices in the affine Grassmannian},
Algebra Number Theory {\bf 8} (2014), no. 4, 857--893.

\bibitem[KTWWY]{ktwwy}
J. ~Kamnitzer, P. ~Tingley, B. ~Webster, A. ~Weekes, O. ~Yacobi, {\em Highest weights for truncated shifted Yangians and product monomial crystals}, arXiv:1511.09131.

\bibitem[KT]{kt96}
S.~Khoroshkin, V.~Tolstoy, {\em Yangian double}, Lett. Math. Phys. {\bf 36}
(1996), no.~4, 373--402.

\bibitem[K]{k79}
B.~Kostant, {\em Quantization and representation theory}, Representation theory
of Lie groups (Cambridge), London Math. Soc. Lecture Note Series {\bf 34},
Cambridge University Press (1979), 287--316.

\bibitem[L1]{L} S.~Levendorskii, \emph{On generators and defining relations of Yangians}, Journal of Geometry and Physics {\bf 12} (1993), 1--11.

\bibitem[L2]{L2} S. ~Levendorskii, {\em On PBW bases for Yangians}, Lett. Math. Phys. {\bf 27} (1993), 37--42.



\bibitem[M]{m03} A.~I.~Molev, {\em Yangians and their applications},
Handbook of Algebra {\bf 3}, Elsevier/North-Holland, Amsterdam (2003), 907--959.

\bibitem[STS]{sts} M. Semenov-Tian-Shansky, {\em Dressing transformations and Poisson group actions}, Publ. RIMS, Kyoto Univ. {\bf 21} (1985), 1237--1260.


\bibitem[T]{t14} C.~Teleman, {\em Gauge theory and mirror symmetry},
arXiv:1404.6305.

\bibitem[VDE]{vde15}
J.~F.~van~Diejen and E.~Emsiz, {\em Difference equation for the
Heckman-Opdam hypergeometric function and its confluent Whittaker limit},
Adv. Math. {\bf 285} (2015), 1225--1240.











\end{thebibliography}
\end{document}